\newtheorem{theorem}{Theorem}
\newtheorem{corollary}[theorem]{Corollary}
\newtheorem{lemma}[theorem]{Lemma}
\newtheorem{proposition}[theorem]{Proposition}
\newenvironment{proof}[1][Proof]{\textbf{#1.} }{\ \rule{0.5em}{0.5em}}
\begin{document}

\title{Representations of $SO(3)$ and angular polyspectra}
\author{D. Marinucci and G. Peccati}
\maketitle

\begin{abstract}
We characterize the angular polyspectra, of arbitrary order, associated with
isotropic fields defined on the sphere $S^{2}=\left\{ \left( x,y,z\right)
:x^{2}+y^{2}+z^{2}=1\right\} $. Our techniques rely heavily on group
representation theory, and specifically on the properties of Wigner matrices
and Clebsch-Gordan coefficients. The findings of the present paper
constitute a basis upon which one can build formal procedures for the
statistical analysis and the probabilistic modelization of the Cosmic
Microwave Background radiation, which is currently a crucial topic of
investigation in cosmology. We also outline an application to random data
compression and \textquotedblleft simulation\textquotedblright\ of
Clebsch-Gordan coefficients.

\textbf{Key Words. }Group Representations; Isotropy; Polyspectra; Spherical
Random Fields.

\textbf{AMS\ 2000 Classification. }60G10; 60G35; 20C12; 20C35
\end{abstract}

\section{Introduction \label{intro}}

The connection between probability theory and group representation theory
has led to a long tradition of fruitful interactions. A well-known reference
is provided by \cite{Diaconis}; see e.g. \cite[Section 40-41]{bump}, \cite%
{DMWZZ}, \cite{Fulman}, \cite{GKR}, \cite{Pyc}, \cite{Raimond}, \cite{Yad},
and the references therein, for other relevant contributions. In this paper
we shall focus in particular on the profound connection between the
probabilistic notion of isotropy, i.e. invariance in law under the action of
a group, and the representation theory of the group itself. One instance of
this connection is well-known, i.e. the celebrated Peter-Weyl Theorem, which
allows the construction of spectral representations for isotropic random
fields on homogeneous spaces of general compact groups, see \cite{PePy} for
a general construction and \cite{MaPe} , \cite{MaPeSphere} for examples
related, respectively, to the torus and the sphere. Our aim here is to use
these representations in order to characterize random fields by means of a
higher order spectral theory; in particular, one of our main goals will be
to establish the link between the so-called \textsl{polyspectra} (or higher
order spectra) and alternative (tensor product and direct sum)
representations of the underlying isotropy group. In particular, we shall
provide a general expression for higher order spectra of isotropic spherical
random fields in terms of convolutions of Clebsch-Gordan or Wigner
coefficients. The latter where introduced in Mathematics in the XIX\ century
for the analysis of Algebraic Invariants; they have since then played a
crucial role in the development of Quantum Physics in the XX century (see
for instance \cite{VMK} for a comprehensive reference); their role in Group
Representation theory will be discussed below, while more details can be
found for instance in \cite{VilKly}.

Our analysis may have an intrinsic mathematical interest, but it is also
strongly motivated by applications to Physics and Cosmology. Concerning the
latter, the analysis of higher order spectra for isotropic spherical random
fields is currently at the core of several research efforts which are
related to the analysis of Cosmic Microwave Background (CMB) radiation data,
see for instance \cite{dodelson} for a general introduction and \cite%
{Hu,ks,m2006,MarPTRF} for some references on the bi- and trispectrum. A
general characterization of the theoretical properties of higher order
angular power spectra can yield several insights into the statistical
analysis of the massive datasets that are or will be made available by
satellite experiments such as \emph{WMAP} or \emph{Planck}. For instance,
the current understanding of the behaviour of the bispectrum for some simple
physical models has already led to many applications (\cite{Cabella}, \cite%
{yadav}, \cite{wandelt}), aiming at obtaining constraints on nonlinearity
parameters of utmost physical significance; needless to say, a proper
understanding of higher order spectra can lead to more efficient statistical
procedures and better constraints, which may help to solve some of the
important scientific issues at stake in CMB\ analysis (primarily a proper
understanding of the Big Bang \emph{inflationary} dynamics, which is tightly
linked with the CMB nonlinear structure, see \cite{dodelson}, \cite{creza}, %
\cite{Bart}, \cite{Maldac}).

The relevance of the current results need not be limited to cosmological
applications. Indeed, the analysis of spherical random fields has currently
led to remarkable developments in the Geophysical and Planetary Sciences,
and even in Medical Imaging, see for instance (\cite{chung}, \cite{simons}, %
\cite{wieczorek}). Moreover, we shall show below how the relationship which
we establish leads very naturally to some numerical algorithms for the
estimation of Clebsch-Gordan and Wigner coefficients. The latter represent
probability amplitudes of quantum interactions and as such a rich literature
in Mathematical Physics has been concerned with recipes for their numerical
estimation: our procedure lends itself to easy implementation and can be
simply extended to very general compact groups, although in this paper we
focus solely on $SO(3).$

The plan of this paper is as follows: in Section \ref{General} we introduce
our general probabilistic setting and provide some preliminary notation and
background material. In Section \ref{isotropy} we present some background
material on representation theory, while in Section \ref{S : POLY} and
Section \ref{S : APdelta} we obtain our main results, including the explicit
characterization of polyspectra. These results are applied in Section \ref%
{examples} to derive explicit expressions in some important cases (such as $%
\chi ^{2}$ random fields). Section \ref{further} is devoted to further
issues that we see as the seed for future research: they concern, in
particular, the connection with the representation theory for the symmetric
group and the Monte Carlo estimation of Clebsch-Gordan coefficients.

In the subsequent sections, every random element is defined on an
appropriate probability space $\left( \Omega ,\mathcal{F},P\right) $.

\section{General setting \label{General}}

In this paper, we focus on real-valued, centered, square-integrable and
isotropic random fields on the sphere $S^{2}=\left\{ \left( x,y,z\right) \in
\mathbb{R}^{3}:x^{2}+y^{2}+z^{2}=1\right\} $. A centered and square
integrable random field $T$ on $S^{2}$ is just a collection of random
variables of the type $T=\left\{ T(x):\text{ }x\in S^{2}\right\} $ such
that, for every $x\in S^{2}$, $ET(x)=0$ and $ET^{2}(x)<\infty $. In the
following, whenever we write that $T$ is a field on $S^{2}$, we will
implicitly assume that $T$ is real-valued, centered and square-integrable.
From now on, we shall distinguish between two notions of isotropy, which we
name \textsl{strong isotropy }and \textsl{weak isotropy of order }$n$ ($%
n\geq 2$).

\begin{description}
\item[Strong isotropy --] The field $T$ is said to be \textsl{strongly
isotropic} if, for every $k\in \mathbb{N}$, every $x_{1},...,x_{k}\in S^{2}$
and every $g\in SO(3)$ (the group of rotations in $\mathbb{R}^{3}$) we have%
\begin{equation}
\left\{ T(x_{1}),...T(x_{k})\right\} \overset{d}{=}\left\{
T(gx_{1}),...T(gx_{k})\right\} \text{ ,}  \label{idISO}
\end{equation}%
where $\overset{d}{=}$ denotes equality in distribution.

\item[Weak isotropy --] The field $T$ is said to be $n$-\textsl{weakly
isotropic} ($n\geq 2$) if $E|T(x)|^{n}<\infty $ for every $x\in S^{2}$, and
if, for every $x_{1},...,x_{n}\in S^{2}$ and every $g\in SO(3)$,
\begin{equation*}
E\left[ T(x_{1})\times \cdot \cdot \cdot \times T(x_{n})\right] =E\left[
T(gx_{1})\times \cdot \cdot \cdot \times T(gx_{n})\right] \text{.}
\end{equation*}
\end{description}

\bigskip

The following statement, whose proof is elementary, indicates some relations
between the two notions of isotropy described above.

\begin{proposition}
\label{P : EasyIso}

\begin{enumerate}
\item A strongly isotropic field with finite moments of some order $n\geq 2$
is also $n$-weakly isotropic.

\item Suppose that the field $T$ is $n$-weakly isotropic for every $n\geq 2$
(in particular, $E|T(x)|^{n}<\infty $ for every $n\geq 2$ and every $x\in
S^{2}$) and that, for every $k\geq 1$ and every $\left(
x_{1},...,x_{k}\right) $, the law of the vector $\left\{ T\left(
x_{1}\right) ,...,T\left( x_{k}\right) \right\} $ is determined by its
moments. Then, $T$ is also strongly isotropic.
\end{enumerate}
\end{proposition}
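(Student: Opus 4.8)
The plan is to prove the two parts separately, since they are essentially independent implications tied together by the two notions of isotropy.

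For part 1, I would argue directly. Suppose $T$ is strongly isotropic and has finite moments of order $n$. Fix $x_1,\dots,x_n\in S^2$ and $g\in SO(3)$. The key observation is that the product map $(t_1,\dots,t_n)\mapsto t_1\times\cdots\times t_n$ is a fixed measurable (indeed continuous) function of the vector $(T(x_1),\dots,T(x_n))$, and by (\ref{idISO}) applied with $k=n$ the two vectors $(T(x_1),\dots,T(x_n))$ and $(T(gx_1),\dots,T(gx_n))$ have the same distribution. Since equality in distribution is preserved under a common measurable transformation, the random variables $T(x_1)\times\cdots\times T(x_n)$ and $T(gx_1)\times\cdots\times T(gx_n)$ have the same law, and in particular the same expectation whenever that expectation exists. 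The finiteness of the $n$-th moment, together with the elementary bound relating mixed moments to pure moments (e.g. via the arithmetic--geometric mean inequality $|t_1\cdots t_n|\le n^{-1}\sum_j |t_j|^n$), guarantees that $E[T(x_1)\times\cdots\times T(x_n)]$ is well defined and finite, so the equality of expectations is meaningful. This gives $n$-weak isotropy.

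For part 2, the strategy is to leverage the moment-determinacy hypothesis. Fix $k\ge 1$, points $x_1,\dots,x_k$, and $g\in SO(3)$; I want to show the vectors $U:=(T(x_1),\dots,T(x_k))$ and $V:=(T(gx_1),\dots,T(gx_k))$ are equal in distribution. Because the law of each such vector is determined by its moments, it suffices to show that all joint moments agree, i.e. that for every multi-index $(m_1,\dots,m_k)$ of nonnegative integers we have $E[\prod_{j}T(x_j)^{m_j}]=E[\prod_{j}T(gx_j)^{m_j}]$. Here the main idea is to write a mixed moment of the $k$-vector as a special case of an $n$-th order moment with $n=m_1+\dots+m_k$, obtained by listing each point $x_j$ with multiplicity $m_j$: set $n=\sum_j m_j$ and let $(y_1,\dots,y_n)$ be the list in which $x_j$ appears $m_j$ times. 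Then $\prod_j T(x_j)^{m_j}=\prod_{i=1}^n T(y_i)$, and similarly $\prod_j T(gx_j)^{m_j}=\prod_{i=1}^n T(gy_i)$, so $n$-weak isotropy (which holds for this $n$ by hypothesis) yields exactly the desired equality of moments.

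The main obstacle, and the step requiring the most care, is part 2: one must check that moment-determinacy is being invoked correctly for multivariate distributions, and that the reduction of an arbitrary joint moment to an $n$-fold product of field values is legitimate for every order $n$. The hypothesis that $T$ is $n$-weakly isotropic \emph{for every} $n\ge 2$ is precisely what makes the multiplicity trick work uniformly across all multi-indices; and the assumption that the full joint law is determined by its moments is what upgrades equality of all mixed moments to equality in distribution. Once equality in distribution of every finite-dimensional vector $U\overset{d}{=}V$ is established for all $k$ and all $g$, this is exactly the definition (\ref{idISO}) of strong isotropy, completing the proof. A minor technical point worth verifying is the base case $n$ or moments of order $0$ and $1$ (handled trivially, since $E[T(x)]=0$ by centering and the empty product gives constant $1$), so that the determinacy argument covers all multi-indices including those of low total degree.
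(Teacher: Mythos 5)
Your proof is correct. The paper in fact omits the proof of this proposition entirely (it is declared ``elementary''), and your argument --- for part 1, pushing equality in law of $(T(x_1),\dots,T(x_n))$ and $(T(gx_1),\dots,T(gx_n))$ through the product map, with H\"older/AM--GM guaranteeing integrability; for part 2, realizing every mixed moment $E[\prod_j T(x_j)^{m_j}]$ as an $n$-th order moment with repeated points (legitimate since the weak-isotropy definition allows repetitions), handling degrees $0$ and $1$ by centering, and then invoking moment determinacy of the joint law --- is exactly the standard argument the authors have in mind.
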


\bigskip

Now suppose that $T$ is a strongly isotropic field, and denote by $dx$ the
Lebesgue measure on $S^{2}$. Since the variance $ET\left( x\right) ^{2}$ is
finite and independent of $x$ (by isotropy), one deduces immediately that
\begin{equation*}
E\left[ \int_{S^{2}}T\left( x\right) ^{2}dx\right] <\infty \text{,}
\end{equation*}%
from which one infers that the random path $x\rightarrow T\left( x\right) $
is a.s. square integrable with respect to the Lebesgue measure. Then, it is
a standard result that the following spectral representation holds:%
\begin{equation}
T\left( x\right) =\sum_{l=0}^{\infty }\sum_{m=-l}^{l}a_{lm}Y_{lm}\left(
x\right) \text{, \ \ where \ \ }a_{lm}\triangleq \int_{S^{2}}T\left(
x\right) \overline{Y_{lm}\left( x\right) }dx\text{,}  \label{specrap}
\end{equation}%
and where the complex-valued functions $\left\{ Y_{lm}:l\geq 0\text{, \ }%
m=-l,...,l\right\} $ are the so-called \textsl{spherical harmonics}, to be
defined below. The spectral representation (\ref{specrap}) must be
understood in the $L^{2}(\Omega \times S^{2})$ sense, i.e.
\begin{equation*}
\lim_{L\rightarrow \infty }E\left\Vert
T-\sum_{l=0}^{L}\sum_{m=-l}^{l}a_{lm}Y_{lm}\right\Vert _{L^{2}(S^{2})}^{2}=0%
\text{,}
\end{equation*}%
where $L^{2}(S^{2})$ is the complex Hilbert space of functions on $S^{2}$,
which are square-integrable with respect to $dx$. If moreover the
trajectories of $T(x)$ are a.s. continuous, then the representation (\ref%
{specrap}) holds pointwise, i.e.
\begin{equation*}
\lim_{L\rightarrow \infty }\left\{
T(x)-\sum_{l=0}^{L}\sum_{m=-l}^{l}a_{lm}Y_{lm}\left( x\right) \right\} =0%
\text{ \ for all }x\in S^{2}\text{, \ a.s.-}P\text{,}
\end{equation*}%
see for instance \cite{adlertaylor}\textbf{\ }or\textbf{\ }\cite{Yad}. The
spherical harmonics $\left\{ Y_{lm}\right\} _{m=-l,...,l}$ are the
eigenfunctions of the Laplace-Beltrami operator on the sphere, denoted by $%
\Delta _{S^{2}}$, satisfying the relation $\Delta
_{S^{2}}Y_{lm}=-l(l+1)Y_{lm}.$ These functions can be represented by means
of spherical coordinates $x=(\theta ,\varphi )$ as follows:%
\begin{eqnarray*}
Y_{lm}(\theta ,\varphi ) &=&\sqrt{\frac{2l+1}{4\pi }\frac{(l-m)!}{(l+m)!}}%
P_{lm}(\cos \theta )\exp (im\varphi )\text{ , for }m>0\text{ ,} \\
Y_{lm}(\theta ,\varphi ) &=&(-1)^{m}\overline{Y_{l,-m}}(\theta ,\varphi )%
\text{ , for }m<0\text{ },\text{ }0\leq \theta \leq \pi ,\text{ }0\leq
\varphi <2\pi \text{ ,}
\end{eqnarray*}%
where $P_{lm}(\cos \theta )$ denotes the associated Legendre polynomial of
degree $l,m,$ i.e.
\begin{eqnarray*}
P_{lm}(x) &=&(-1)^{m}(1-x^{2})^{m/2}\frac{d^{m}}{dx^{m}}P_{l}(x)\text{ , }%
P_{l}(x)=\frac{1}{2^{l}l!}\frac{d^{l}}{dx^{l}}(x^{2}-1)^{l}, \\
m &=&0,1,2,...,l\text{ , }l=0,1,2,3,....\text{ .}
\end{eqnarray*}%
The random spherical harmonics coefficients $\left\{ a_{lm}\right\} $
appearing in (\ref{specrap}) form a triangular array of zero-mean and
square-integrable random variables, which are complex-valued for $m\neq 0$
and such that $Ea_{lm}\overline{a_{l^{\prime }m^{\prime }}}=\delta
_{l}^{l\prime }\delta _{m}^{m\prime }C_{l}$, the bar denoting complex
conjugation. Here, and for the rest of the paper, the symbol $\delta
_{b}^{a} $ is equal to one if $a=b$ and zero otherwise. We also write $%
C_{l}=E\left\vert a_{lm}\right\vert ^{2}$, $l\geq 0$, to indicate the
\textsl{angular power spectrum} of $T$ (we stress that the quantity $C_{l}$
does not depend on $m$ -- see e.g. \cite{BaMa} for a proof of this fact).
Observe that, by definition of the spherical harmonics, $a_{lm}=(-1)^{m}%
\overline{a_{l-m}}$. Note also that a convenient route to derive (\ref%
{specrap}) is by means of an appropriate version of the \textsl{stochastic
Peter-Weyl theorem} -- see for instance \cite{BaMaVa} or \cite{PePy}, as
well as Section \ref{SS : Represent SO(3)}\ below.

\bigskip

Observe that the representation (\ref{specrap}) still holds for fields $%
\left\{ T(x)\right\} $ that are not necessarily isotropic, but such that the
random path $x\rightarrow T(x)$ is $P$-a.s. square integrable with respect
to the Lebesgue measure $dx$. Indeed, if the last property holds, then one
has that, $P$-almost surely,
\begin{equation}
\lim_{L\rightarrow \infty }\int_{S^{2}}\left(
T(x)-\sum_{l=0}^{L}\sum_{m=-l}^{l}a_{lm}Y_{lm}\left( x\right) \right)
^{2}dx=0\text{.}  \label{pathrep}
\end{equation}%
In this case, however, none of the previously stated properties on the array
$\left\{ a_{lm}\right\} $ holds in general. By an argument similar to those
displayed above, a sufficient condition to have that $x\rightarrow T(x)$ is $%
P$-a.s. Lebesgue-square integrable is that $\sup_{x\in
S^{2}}ET(x)^{2}<\infty $.

The next result, that we record for future reference, is proved in \cite%
{BaMa}.

\begin{proposition}
\label{P : BaldiMar}Let $T$ be a centered, square-integrable and strongly
isotropic random field. Let the coefficients $\left\{ a_{lm}\right\} $ be
defined according to (\ref{specrap}). Then, for every $l,m$, one has that $%
E\left\vert a_{lm}\right\vert ^{2}<\infty $. Moreover, for every $l\geq 1$,
the coefficients $\left\{ a_{l0},...,a_{ll}\right\} $ are independent if,
and only if, they are Gaussian. If the vector $\left\{
a_{l0},...,a_{ll}\right\} $ is Gaussian, one also has that $\Re \left(
a_{lm}\right) $ and $\Im \left( a_{lm}\right) $ are independent and
identically distributed for every fixed $m=1,...,l$ ($\Re \left( z\right) $
and $\Im \left( z\right) $ stand, respectively, for the real and imaginary
parts of $z$).
\end{proposition}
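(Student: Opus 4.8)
The plan is to establish the three assertions in turn, extracting the distributional conclusions from the second-order structure of the array $\{a_{lm}\}$ recorded just before the statement together with the $SO(3)$-invariance encoded in strong isotropy. For the finiteness claim I would argue directly from the definition $a_{lm}=\int_{S^{2}}T(x)\overline{Y_{lm}(x)}\,dx$ in (\ref{specrap}). Strong isotropy makes the pointwise variance $ET(x)^{2}$ constant in $x$, say equal to $\sigma^{2}<\infty$; the Cauchy--Schwarz inequality together with the normalization $\int_{S^{2}}|Y_{lm}|^{2}\,dx=1$ gives $|a_{lm}|^{2}\le\int_{S^{2}}|T(x)|^{2}\,dx$ pathwise, so that $E|a_{lm}|^{2}\le E\int_{S^{2}}T(x)^{2}\,dx=4\pi\sigma^{2}<\infty$ by Tonelli. (Parseval's identity even yields $\sum_{l,m}E|a_{lm}|^{2}=4\pi\sigma^{2}$.)

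Next I would compute the full second-order structure of the real and imaginary parts. Writing $a_{lm}=u_{lm}+iv_{lm}$ for $m\ge 1$ and $a_{l0}=u_{l0}\in\mathbb{R}$, the two inputs are the given covariance relation $Ea_{lm}\overline{a_{lm'}}=\delta_{m}^{m'}C_{l}$ and the reality relation $a_{lm}=(-1)^{m}\overline{a_{l,-m}}$. Substituting the latter into the former pins down the pseudo-covariance, $Ea_{lm}a_{lm'}=(-1)^{m'}E[a_{lm}\overline{a_{l,-m'}}]$, which vanishes unless $m'=-m$; in particular it is zero whenever $m,m'\ge 1$. Separating real and imaginary parts of these two identities then forces $E[u_{lm}u_{lm'}]=E[v_{lm}v_{lm'}]=\tfrac12 C_{l}\delta_{m}^{m'}$ and $E[u_{lm}v_{lm'}]=0$ for $m,m'\ge 1$, together with $Eu_{l0}^{2}=C_{l}$ and $E[u_{l0}u_{lm}]=E[u_{l0}v_{lm}]=0$. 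Thus the real family $\{u_{l0}\}\cup\{u_{lm},v_{lm}:1\le m\le l\}$ is pairwise uncorrelated. Since uncorrelatedness is equivalent to independence for a jointly Gaussian vector, this immediately delivers the ``if'' implication of the second assertion, and, because $(u_{lm},v_{lm})$ is then a centred Gaussian pair with equal variances $C_{l}/2$ and zero covariance, the whole of the third assertion as well.

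The substantive step is the converse, that independence of $\{a_{l0},\dots,a_{ll}\}$ forces Gaussianity. Here I would use the full strength of isotropy through the Wigner rotation matrices $D^{l}(g)$ (whose properties are recalled in Section \ref{isotropy}): the harmonic vector $\mathbf{a}_{l}=(a_{l,-l},\dots,a_{ll})$ satisfies $D^{l}(g)\mathbf{a}_{l}\stackrel{d}{=}\mathbf{a}_{l}$ for every $g\in SO(3)$, so for each fixed $g$ the rotated coordinates $a_{lm}(g)=\sum_{k}D_{mk}^{l}(g)a_{lk}$ have the same joint law as $\{a_{lm}\}$ and are in particular again independent. The plan is to invoke a Darmois--Skitovich/Bernstein-type characterization: if two linear forms in independent variables are themselves independent, then every variable entering both forms with a nonzero coefficient must be Gaussian. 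Since $D^{l}$ is irreducible of dimension $2l+1\ge 3$ for $l\ge 1$, one can select a rotation $g$ (e.g.\ a rotation about the $y$-axis, whose small-Wigner entries $d_{mk}^{l}(\beta)$ are generically nonzero) so that two distinct rotated coordinates are linear forms sharing at least two of the original $a_{lk}$ with nonzero weights; the characterization then yields Gaussianity of those $a_{lk}$, and letting $g$ range over enough rotations propagates Gaussianity to the entire vector.

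I expect this last step to be the main obstacle. One must verify that the Wigner matrices actually supply rotations whose entries satisfy the nondegeneracy hypotheses of the chosen characterization theorem, and one must carefully transport the independence assumed among the complex coefficients $\{a_{l0},\dots,a_{ll}\}$ to the real coordinates on which the Darmois--Skitovich argument operates, keeping track of the reality constraint linking $a_{lm}$ and $a_{l,-m}$. Irreducibility of $D^{l}$ is the structural fact ensuring that no coordinate can be decoupled from the mixing, which is ultimately why independence is incompatible with non-Gaussianity once $l\ge 1$.
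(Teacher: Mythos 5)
The paper itself offers no proof of this proposition: it is quoted from \cite{BaMa}, so your attempt can only be measured against that reference, whose argument is indeed of the same type as yours (rotation invariance of $a_{l.}$ plus a Skitovich--Darmois/Ghurye--Olkin characterization of Gaussianity through independence of linear forms). Your first step (finiteness of $E|a_{lm}|^2$ via Cauchy--Schwarz and Tonelli) and your ``if'' direction (second-order structure from $Ea_{lm}\overline{a_{lm'}}=\delta_m^{m'}C_l$ and $a_{l,-m}=(-1)^m\overline{a_{lm}}$, then uncorrelated jointly Gaussian coordinates are independent, with $\Re(a_{lm})$, $\Im(a_{lm})$ i.i.d.\ $N(0,C_l/2)$) are complete and correct.

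The gap is in the converse, and it is concrete: your nondegeneracy condition is the wrong one. Since $a_{lk}$ and $a_{l,-k}$ are conjugate-linked rather than independent, the rotated coordinate $a_{lm}(g)=\sum_k D^l_{mk}(g)a_{lk}$ must be rewritten as a form in the independent blocks $X_k=(\Re a_{lk},\Im a_{lk})$, $k=0,\dots,l$, and the coefficient on $X_k$ is the real-linear map $z\mapsto D^l_{mk}(g)\,z+(-1)^kD^l_{m,-k}(g)\,\bar z$, whose determinant is $|D^l_{mk}(g)|^2-|D^l_{m,-k}(g)|^2$. So what the vector (Ghurye--Olkin) version of Skitovich--Darmois requires is not ``entries $d^l_{mk}(\beta)$ generically nonzero'' but $|D^l_{mk}(g)|\neq|D^l_{m,-k}(g)|$, and this fails \emph{identically} for the row $m=0$: by (\ref{spherwig}) one has $|D^l_{0k}|=|D^l_{0,-k}|$ (equivalently, the rotated $a_{l0}(g)$ is real, hence a rank-one form on every block $X_k$). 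Consequently any pair of rotated coordinates involving $m=0$ violates the hypotheses of the characterization theorem. For $l\geq 2$ this is repairable: choose $m\neq m'$ with $m,m'\geq 1$, and note that $|d^l_{mk}(\vartheta)|$ and $|d^l_{m,-k}(\vartheta)|$ vanish at $\vartheta=0$ to the distinct orders $|m-k|<m+k$, so a generic $\vartheta$ makes all the relevant $2\times 2$ blocks nonsingular. But for $l=1$ the independent family is just $\{a_{10},a_{11}\}$, every admissible pair of rotated coordinates must involve the degenerate $m=0$ form, and your argument as stated does not close; one needs an extra device there --- e.g.\ extract Gaussianity of a single nonzero projection of $(\Re a_{11},\Im a_{11})$ and then use invariance of the law of $a_{11}$ under $z$-rotations ($a_{11}\mapsto e^{i\varphi}a_{11}$) to upgrade it to all projections, hence to joint Gaussianity. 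You flagged the ``transport to real coordinates'' as the obstacle, which is the right instinct, but the specific failure mode (singularity of the realified blocks, fatal precisely in the $m=0$ row and in the $l=1$ case) is not identified or resolved in the proposal.
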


\bigskip

The following result formalizes the fact that, in general, one cannot deduce
strong isotropy from weak isotropy. The proof makes use of Proposition \ref%
{P : EasyIso}.

\begin{proposition}
For every $n\geq 2$, there exists a $n$-weakly isotropic field $T$ such that
$T$ is not strongly isotropic.
\end{proposition}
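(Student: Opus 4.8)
The plan is to build, for each fixed $n\ge 2$, a field concentrated on a single multipole, whose harmonic coefficients mimic those of an isotropic Gaussian field up to order $n$ but are non-Gaussian, and then to exclude strong isotropy using the results already recorded. Fix an integer $L\ge 1$ and set $T(x)=\sum_{m=-L}^{L}a_{Lm}Y_{Lm}(x)$, where the coefficients with $m\ge 1$ are taken to be the same independent complex Gaussian variables (with $E|a_{Lm}|^2=C_L$ and reality constraint $a_{L,-m}=(-1)^m\overline{a_{Lm}}$) as in the standard single-frequency isotropic Gaussian field $G_L=\sum_m g_{Lm}Y_{Lm}$, while $a_{L0}$ is chosen to be a real, centered, \emph{non-Gaussian} variable whose first $n$ moments equal those of $g_{L0}\sim N(0,C_L)$, independent of the others. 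The reality constraint makes $T$ real-valued, and since the sum is finite all $L^2$-convergence and integrability questions are vacuous; $T$ is centered and square-integrable.

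First I would verify $n$-weak isotropy. Because the free coefficients of $T$ and of $G_L$ are independent and their individual (mixed) moments agree up to order $n$, every joint moment of the array $\{a_{Lm}\}$ of total order at most $n$ equals the corresponding moment of $\{g_{Lm}\}$. Expanding $E[T(x_1)\cdots T(x_k)]=\sum E[\prod_i a_{Lm_i}]\prod_i Y_{Lm_i}(x_i)$ then shows that, for every $k\le n$, the $k$-point correlation of $T$ coincides with that of $G_L$. Since $G_L$ is a zero-mean Gaussian field whose covariance $C_L\frac{2L+1}{4\pi}P_L(\langle x,y\rangle)$ is rotation invariant, $G_L$ is strongly isotropic and hence $k$-weakly isotropic for every $k$ by Proposition \ref{P : EasyIso}(1); consequently $T$ inherits $n$-weak isotropy, and $E|T(x)|^n<\infty$.

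It remains to rule out strong isotropy, and here Proposition \ref{P : BaldiMar} gives the quickest route. The coefficients extracted from $T$ via (\ref{specrap}) are, by orthonormality, exactly the $a_{Lm}$; in particular the level-$L$ vector $\{a_{L0},\dots,a_{LL}\}$ is independent by construction. If $T$ were strongly isotropic, Proposition \ref{P : BaldiMar} (applied with $l=L\ge1$) would force this independent vector to be Gaussian, contradicting the non-Gaussianity of $a_{L0}$; hence $T$ is not strongly isotropic. Alternatively, to use Proposition \ref{P : EasyIso} as intended, one perturbs instead the $(n+1)$-th moment of $a_{L0}$: leaving all lower moments Gaussian changes the $(n+1)$-point function only through the single diagonal term, giving
\begin{equation*}
E\!\left[T(x)^{n+1}\right]=E\!\left[G_L(x)^{n+1}\right]+\big(E[a_{L0}^{n+1}]-E[g_{L0}^{n+1}]\big)\,Y_{L0}(x)^{n+1},
\end{equation*}
whose first term is constant while $Y_{L0}(x)^{n+1}\propto P_L(\cos\theta)^{n+1}$ is not; thus $T$ fails $(n+1)$-weak isotropy, and since $T$ has finite $(n+1)$-th moments Proposition \ref{P : EasyIso}(1) again excludes strong isotropy.

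The single genuinely non-elementary point, which I expect to be the main obstacle, is the existence of the law of $a_{L0}$: a probability measure on $\mathbb{R}$ matching the first $n$ moments of $N(0,C_L)$ but distinct from it (or, in the alternative route, with a prescribed different $(n+1)$-th moment). This is a truncated moment problem, solvable because the Gaussian moment vector is an interior point of the truncated moment cone, so a distinct measure --- e.g. a suitable finitely supported one --- realizing the same first $n$ moments exists. The only other labour is the bookkeeping that changing $a_{L0}$ alone leaves all order-$\le n$ joint moments intact (and, for the second route, that the displayed single-term perturbation cannot be cancelled); the representation-theoretic content is light, since the single-frequency reduction and Proposition \ref{P : BaldiMar} do the real work.
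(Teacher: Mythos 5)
Your proposal is correct and follows essentially the same route as the paper's own proof: a single-frequency field whose independent harmonic coefficients match the Gaussian moments up to order $n$ but are non-Gaussian (your construction, perturbing only $a_{L0}$, is a concrete instance of the paper's vector $\{b_0,\dots,b_l\}$), with strong isotropy excluded by the independence-implies-Gaussianity characterization of Proposition \ref{P : BaldiMar}. Your extra contributions --- the explicit truncated-moment-problem justification of existence, and the alternative argument that perturbs the $(n+1)$-th moment to violate $(n+1)$-weak isotropy directly --- are both sound but not needed beyond what the paper does.
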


\begin{proof}
Fix $l\geq 1$, and consider a vector%
\begin{equation*}
b_{m}\text{, \ \ }m=-l,...,l\text{,}
\end{equation*}%
of centered complex-valued random variables such that: (i) $b_{0}$ is real,
(ii) $b_{-m}=\left( -1\right) ^{m}\overline{b_{m}}$ ($m=1,...,l$), (iii) the
vector $\left\{ b_{0},...,b_{l}\right\} $ is not Gaussian and is composed of
independent random variables, (iv) for every $k=1,...,n$, the (possibly
mixed) moments of order $k$ of the variables $\left\{
b_{0},...,b_{l}\right\} $ coincide with those of a vector $\left\{
a_{0},...,a_{l}\right\} $ of independent, centered and complex-valued
Gaussian random variables with common variance $C_{l}$ and such that $a_{0}$
is real and, for every $m=1,...,l$, the real and imaginary parts of $a_{m}$
are independent and identically distributed (the existence of a vector such
as $\left\{ b_{0},...,b_{l}\right\} $ is easily proved). Now define the two
fields
\begin{equation*}
T\left( x\right) =\sum_{m=-l}^{l}b_{m}Y_{lm}\left( x\right) \text{ \ and \ }%
T^{\ast }\left( x\right) =\sum_{m=-l}^{l}a_{m}Y_{lm}\left( x\right) .
\end{equation*}%
By Proposition \ref{P : BaldiMar}, $T^{\ast }$ is strongly isotropic, and
also $n$-weakly isotropic by Proposition \ref{P : EasyIso}. By construction,
one also has that $T$ is $n$-weakly isotropic. However, $T$ cannot be
strongly isotropic, since this would violate Proposition \ref{P : BaldiMar}
(indeed, if $T$ was isotropic, one would have an example of an isotropic
field whose harmonic coefficients $\left\{ b_{0},...,b_{l}\right\} $ are
independent and non-Gaussian).
\end{proof}

\bigskip

In what follows, we use the symbol $A\otimes B$ to indicate the \textsl{%
Kronecker product} between two matrices $A$ and $B$. Given $n\geq 2$, we
denote by $\Pi \left( n\right) $ the class of partitions of the set $\left\{
1,...,n\right\} $. Given an element $\pi \in \Pi \left( n\right) $, we write
$\pi =\left\{ b_{1},...,b_{k}\right\} $ to indicate that the sets $%
b_{j}\subseteq \left\{ 1,...,n\right\} $, $j=1,...,k$, are the \textsl{blocks%
} of $\pi $. The blocks of a partition are always listed according to the
lexicographic order, that is: the block $b_{1}$ always contains $1$, the
block $b_{2}$ contains the least element of $\left\{ 1,...,n\right\} $ not
contained in $b_{1}$, and so on. Also the elements within each block $b_{j}$
are written in increasing order. For instance, if a partition $\pi $ of $%
\left\{ 1,...,5\right\} $ is composed of the blocks $\left\{ 1,3\right\}
,\left\{ 5,4\right\} $ and $\left\{ 2\right\} $, we will write $\pi $ in the
form $\pi =\left\{ \left\{ 1,3\right\} ,\left\{ 2\right\} ,\left\{
4,5\right\} \right\} .$

\bigskip

\noindent\textbf{Definition A. }(\textbf{A1}) Let the field $T$ admit the
representation (\ref{specrap}), and suppose that, for some $n\geq 2$, one
has that $E\left\vert a_{lm}\right\vert ^{n}<\infty $ for every $l,m$. Then,
$T$ is said to have \textsl{finite spectral moments }of order $n$.

(\textbf{A2}) Suppose that $T$ has finite spectral moments of order $n\geq 2$%
, and, for $l\geq 0$, use the notation%
\begin{equation}
a_{l.}=\left( a_{l-l},...,a_{l0},...,a_{ll}\right) .  \label{vecnot}
\end{equation}%
The \textsl{polyspectrum of order} $n-1$, associated with $T$, is given by
the collection of vectors
\begin{equation}
S_{l_{1}...l_{n}}=E\left[ a_{l_{1}.}\otimes a_{l_{2}.}\otimes \cdot \cdot
\cdot \otimes a_{l_{n}.}\right] \text{,}  \label{p}
\end{equation}%
where $0\leq l_{1},l_{2},...,l_{n}$. Note that the vector $S_{l_{1}...l_{n}}$
appearing in (\ref{p}) has dimension $\left( 2l_{1}+1\right) \times \cdot
\cdot \cdot \times \left( 2l_{n}+1\right) $.

(\textbf{A3}) Suppose that $T$ has finite spectral moments of order $n\geq 2$%
. The (mixed) \textsl{cumulant polyspectrum of order} $n-1$, associated with
$T$, is given by the vectors
\begin{equation}
S_{l_{1}...l_{n}}^{c}=\sum_{\pi =\left\{ b_{1},...,b_{k}\right\} \in \Pi
\left( n\right) }\left( -1\right) ^{k-1}\left( k-1\right) !E\left[ \otimes
_{i\in b_{1}}a_{l_{i}.}\right] \otimes \cdot \cdot \cdot \otimes E\left[
\otimes _{i\in b_{k}}a_{l_{i}.}\right] \text{,}  \label{pp}
\end{equation}%
where $0\leq l_{1},l_{2},...,l_{n}$, and, for every block $b_{j}=\left\{
i_{1},...,i_{p}\right\} $, we use the notation%
\begin{equation*}
E\left[ \otimes _{i\in b_{j}}a_{l_{i}.}\right] =E\left[ a_{l_{i_{1}.}}%
\otimes \cdot \cdot \cdot \otimes a_{l_{i_{p}.}}\right]
\end{equation*}%
(recall that we always list the elements of $b_{j}$ in such a way that $%
i_{1}\leq \cdot \cdot \cdot \leq i_{p}$). Plainly, the vector $%
S_{l_{1}...l_{n}}^{c}$ in (\ref{pp}) has also dimension $\left(
2l_{1}+1\right) \times \cdot \cdot \cdot \times \left( 2l_{n}+1\right) .$

\bigskip

\noindent \textbf{Remark. } Suppose that $T$ has finite spectral moments of
order $n\geq 2$. Then, by selecting frequencies $l_{1}=l_{2}=\cdot \cdot
\cdot =l_{3}=l\geq 0$, one obtains that%
\begin{equation}
S_{\underset{n\text{ times}}{\underbrace{l...l}}}^{c}:=S_{l...l}^{c}\left(
n\right) =\sum_{\pi =\left\{ b_{1},...,b_{k}\right\} \in \Pi \left( n\right)
}\left( -1\right) ^{k-1}\left( k-1\right) !E\left[ \left( a_{l.}\right)
^{\otimes \left\vert b_{1}\right\vert }\right] \otimes \cdot \cdot \cdot
\otimes E\left[ \left( a_{l.}\right) ^{\otimes \left\vert b_{k}\right\vert }%
\right]  \label{singleLcum}
\end{equation}%
where $\left\vert b_{j}\right\vert $ stands for the size of the block $b_{j}$%
, and we use the notation%
\begin{equation*}
\left( a_{l.}\right) ^{\otimes \left\vert b_{j}\right\vert }=\underset{%
\left\vert b_{j}\right\vert \text{ times}}{\underbrace{a_{l.}\otimes \cdot
\cdot \cdot \otimes a_{l.}}}\text{ \ .}
\end{equation*}

\section{Preliminary material \label{isotropy}}

\subsection{Representation Theory for $SO(3)$ \label{SS : Represent SO(3)}}

We start by reviewing briefly some background material on the special group
of rotations $SO(3)$, i.e. the space of $3\times 3$ real matrices $A$ such
that $A^{\prime }A=I_{3}$ (the three-dimensional identity matrix) and $\det
(A)=1$. We first\ recall that each element $g\in SO(3)$ can be parametrized
by the set $(\varphi ,\vartheta ,\psi )$ of the so-called \textsl{Euler
angles} ($0\leq \varphi <2\pi ,$ $0\leq \vartheta \leq \pi ,$ $0\leq \psi
<2\pi $); indeed each rotation in $\mathbb{R}^{3}$ can be realized
sequentially as%
\begin{equation}
A=A(g)=R(\psi ,\vartheta ,\varphi )=R_{z}(\psi )R_{x}(\vartheta
)R_{z}(\varphi )\text{ }  \label{ez}
\end{equation}%
where $R_{z}(\varphi ),R_{x}(\vartheta ),R_{z}(\psi )\in SO(3)$ can be
expressed by means of the following general definitions, valid for every
angle $\alpha $,%
\begin{equation*}
R_{z}(\alpha )=\left(
\begin{array}{ccc}
\cos \alpha & -\sin \alpha & 0 \\
\sin \alpha & \cos \alpha & 0 \\
0 & 0 & 1%
\end{array}%
\right) \text{ , }R_{x}(\alpha )=\left(
\begin{array}{ccc}
1 & 0 & 0 \\
0 & \cos \alpha & -\sin \alpha \\
0 & \sin \alpha & \cos \alpha%
\end{array}%
\right) \text{ .}
\end{equation*}%
The representation (\ref{ez}) is unique except for $\vartheta =0$ or $%
\vartheta =\pi $, in which case only the sum $\varphi +\psi $ is determined.
In words, the rotation is realized by rotating first by $\varphi $ around
the axis $z,$ then rotating around the new $x$ axis by $\vartheta ,$ then
rotating by $\psi $ around the new $z$ axis. It is clear that the first two
rotations identify one point on the sphere, so the whole operation could be
also interpreted as moving the North Pole to a new orientation in $S^{2}$
and then rotating by $\psi $ the tangent plane at the new location.

In these coordinates, a complete set of irreducible matrix representations
for $SO(3)$ is provided by the Wigner's $D$ matrices $D^{l}(\psi ,\vartheta
,\varphi )=\left\{ D_{mn}^{l}(\psi ,\vartheta ,\varphi )\right\}
_{m,n=-l,...,l}$, of dimensions $(2l+1)\times (2l+1)$ for $l=0,1,2,...;$ we
refer to classical textbooks, such as \cite{VilKly}, \cite{bump} or \cite%
{Diaconis}, for any unexplained definition or result concerning group
representation theory. An analytic expression for the elements of Wigner's $%
D $ matrices is provided by%
\begin{equation*}
D_{mn}^{l}(\psi ,\vartheta ,\varphi )=e^{-in\psi }d_{mn}^{l}(\vartheta
)e^{im\varphi },\text{ \ }m,n=-\left( 2l+1\right) ,...,2l+1
\end{equation*}%
where the indices $m,n$ indicate, respectively, columns and rows, and%
\begin{eqnarray*}
d_{mn}^{l}(\vartheta ) &=&(-1)^{l-n}\left[ (l+m)!(l-m)!(l+n)!(l-n)!\right]
^{1/2} \\
&&\times \sum_{k}(-1)^{k}\frac{\left( \cos \frac{\vartheta }{2}\right)
^{m+n+2k}\left( \sin \frac{\vartheta }{2}\right) ^{2l-m-n-2k}}{%
k!(l-m-k)!(l-n-k)!(m+n+k)!}\text{ ,}
\end{eqnarray*}%
and the sum runs over all $k$ such that the factorials are non-negative; see %
\cite[Chapter 4]{VMK} for a huge collection of alternative expressions. Here
we simply recall that the elements of $D^{l}(\psi ,\vartheta ,\varphi )$ are
related to the spherical harmonics by the relationship
\begin{equation}
D_{0m}^{l}(\varphi ,\vartheta ,\psi )=(-1)^{m}\sqrt{\frac{4\pi }{2l+1}}%
Y_{l-m}(\vartheta ,\varphi )=\sqrt{\frac{4\pi }{2l+1}}Y_{lm}^{\ast
}(\vartheta ,\varphi )\text{ .}  \label{spherwig}
\end{equation}%
In other words, the spherical harmonics correspond (up to a constant) to the
elements of the \textquotedblleft central\textquotedblright\ column in the
Wigner's $D$ matrix. Such matrices operate irreducibly and equivalently on $%
(2l+1)$ spaces (the so-called isotypical spaces), each of them spanned by a
different column $n$ of the matrix representation itself. The elements of
column $n$ correspond to the so-called \emph{spin n} spherical harmonics,
which enjoy a great importance in particle physics and in harmonic
expansions for tensor valued random fields. In this paper, we restrict our
attention only to the usual $n=0$\emph{\ }spherical harmonics, which
correspond to usual scalar functions.

\bigskip

\noindent \textbf{Remark. }By exploiting relation (\ref{spherwig}), it is
not difficult to show that the usual spectral representation for random
fields on the sphere, as given in (\ref{specrap}), is just the stochastic
Peter-Weyl Theorem on the quotient space $S^{2}=SO(3)/SO(2).$ Indeed, by the
stochastic Peter-Weyl Theorem (see e.g. \cite{PePy}) we obtain, for any
square integrable, isotropic random field $\left\{ T(g):\text{ }g\in
SO(3)\right\} $%
\begin{equation*}
T(g)=T(\varphi ,\vartheta ,\psi )=\sum_{l}\sum_{m,n}a_{lmn}\sqrt{\frac{2l+1}{%
8\pi ^{2}}}D_{mn}^{l}(\varphi ,\vartheta ,\psi )\text{ ,}
\end{equation*}%
where $dg$ is the Haar (uniform) measure on $SO\left( 3\right) $ with total
mass $8\pi ^{2}.$ Now if we consider the restriction of $T(g)$ to $%
S^{2}=SO(3)/SO(2)$, denoted by $T_{S^{2}}(\varphi ,\vartheta )$, we deduce
that%
\begin{eqnarray*}
a_{lmn} &=&\int_{SO(3)}T_{S^{2}}(g)\sqrt{\frac{2l+1}{8\pi ^{2}}}\overline{D}%
_{mn}^{l}(g)dg \\
&=&\int_{S^{2}}T_{S^{2}}(\varphi ,\vartheta )\left\{ \int_{0}^{2\pi
}e^{in\psi }d\psi \right\} \sqrt{\frac{2l+1}{8\pi ^{2}}}d_{mn}^{l}(\vartheta
)e^{-im\varphi }\sin \vartheta d\varphi d\vartheta \text{ ,} \\
&=&\int_{S^{2}}T_{S^{2}}(\varphi ,\vartheta )\delta _{n}^{0}(2\pi )\sqrt{%
\frac{2l+1}{8\pi ^{2}}}d_{mn}^{l}(\vartheta )e^{-im\varphi }\sin \vartheta
d\varphi d\vartheta \text{ ,}
\end{eqnarray*}%
the second equality following from the fact that $T_{S^{2}}(g)$ is constant
with respect to $\psi .$ We can thus conclude that%
\begin{equation*}
a_{lmn}=\left\{
\begin{array}{c}
0\text{ for }n\neq 0 \\
\sqrt{2\pi }a_{lm}\text{ for }n=0%
\end{array}%
\right. ,
\end{equation*}%
where the array $\left\{ a_{lm}\right\} $ is defined by (\ref{specrap}).

\subsection{The Clebsch-Gordan matrices \label{SS : CGmat}}

It follows from standard representation theory that we can exploit the
family $\left\{ D^{l}\right\} _{l=0,1,,2,...}$ to build alternative
(reducible) representations, either by taking the tensor product family $%
\left\{ D^{l_{1}}\otimes D^{l_{2}}\right\} _{l_{1},l_{2}}$, or by
considering direct sums $\left\{ \oplus
_{l=|l_{2}-l_{1}|}^{l_{2}+l_{1}}D^{l}\right\} _{l_{1},l_{2}}$. These
representations have dimensions
\begin{equation*}
(2l_{1}+1)(2l_{2}+1)\times (2l_{1}+1)(2l_{2}+1)
\end{equation*}
and are unitarily equivalent, whence there exists a unitary matrix $%
C_{l_{1}l_{2}}$ such that%
\begin{equation}
\left\{ D^{l_{1}}\otimes D^{l_{2}}\right\} =C_{l_{1}l_{2}}\left\{ \oplus
_{l=|l_{2}-l_{1}|}^{l_{2}+l_{1}}D^{l}\right\} C_{l_{1}l_{2}}^{\ast }\text{ .}
\label{clebun}
\end{equation}%
The matrix $C_{l_{1}l_{2}}$ is a $\left\{ (2l_{1}+1)(2l_{2}+1)\times
(2l_{1}+1)(2l_{2}+1)\right\} $ block matrix, whose blocks, of dimensions $%
(2l_{2}+1)\times (2l+1)$, are customarily denoted by $%
C_{l_{1}(m_{1})l_{2}}^{l}$, $m_{1}=-l_{1},...,l_{1};$ the elements of such a
block are indexed by $m_{2}$ (over rows) and $m$ (over columns; note that $%
m=-(2l+1),...,2l+1$). More precisely,%
\begin{eqnarray}
C_{l_{1}l_{2}} &=&\left[ C_{l_{1}(m_{1})l_{2}.}^{l.}\right]
_{m_{1}=-l_{1},...,l_{1};l=|l_{2}-l_{1}|,...,l_{2}+l_{1}}  \label{CG1} \\
C_{l_{1}(m_{1})l_{2}.}^{l.} &=&\left\{ C_{l_{1}m_{1}l_{2}m_{2}}^{lm}\right\}
_{m_{2}=-l_{2},...,l_{2};m=-l,...,l}\text{ .}  \label{CG2}
\end{eqnarray}

\bigskip

\noindent \textbf{Remark. }The fact that the two matrices $D^{l_{1}}\otimes
D^{l_{2}}$ and $\oplus _{l=|l_{2}-l_{1}|}^{l_{2}+l_{1}}D^{l}$ have the same
dimension follows from the elementary relation (valid for any integers $%
l_{1},l_{2}\geq 0$):%
\begin{equation}
\sum_{l=\left\vert l_{2}-l_{1}\right\vert }^{l_{1}+l_{2}}(2l+1)=\left(
2l_{1}+1\right) \left( 2l_{2}+1\right) \text{.}  \label{fd}
\end{equation}%
By induction, one also obtains that, for every $n\geq 3$,%
\begin{equation}
\sum_{\lambda _{1}=\left\vert l_{2}-l_{1}\right\vert
}^{l_{1}+l_{2}}\sum_{\lambda _{2}=\left\vert l_{3}-\lambda
_{1}\right\vert }^{\lambda _{1}+l_{3}}\cdot \cdot \cdot
\sum_{\lambda _{n-1}=\left\vert l_{n}-\lambda _{n-2}\right\vert
}^{\lambda _{n-2}+l_{n}}(2\lambda
_{n-1}+1)=\prod\limits_{j=1}^{n}\left( 2l_{j}+1\right) ,
\label{fdfd}
\end{equation}%
for any integers $l_{1},...,l_{n}\geq 0$ (relation (\ref{fdfd}) is needed in
Section \ref{SS : Structue}).

\bigskip

The \textsl{Clebsch-Gordan coefficients} for $SO(3)$ are then defined as the
collection $\left\{ C_{l_{1}m_{1}l_{2}m_{2}}^{lm}\right\} $ of the the
elements of the unitary matrices $C_{l_{1}l_{2}}$. These coefficients were
introduced in Mathematics in the XIX century, as motivated by the analysis
of invariants in Algebraic Geometry; in the 20th century, they have gained
an enormous importance in the quantum theory of angular momentum, where $%
C_{l_{1}m_{1}l_{2}m_{2}}^{lm}$ represents the \textsl{probability amplitude}
that two particles with total angular momentum $l_{1},l_{2}$ and momentum
projection on the $z$-axis $m_{1}$ and $m_{2}$ are coupled to form a system
with total angular momentum $l$ and projection $m$ (see e.g. \cite{Libo}).
Their use in the analysis of isotropic random fields is much more recent,
see for instance \cite{Hu} and the references therein.

\bigskip

\noindent \textbf{Remark }(\textit{More on the structure of the
Clebsch-Gordan matrices}). To ease the reading of the subsequent discussion,
we provide an alternative way of building a Clebsch-Gordan matrix $%
C_{l_{1}l_{2}}$, starting from any enumeration of its entries. Fix integers $%
l_{1},l_{2}\geq 0$ such that $l_{1}\leq l_{2}$ (this is just for notational
convenience), and consider the Clebsch-Gordan coefficients $\left\{
C_{l_{1}m_{1}l_{2}m_{2}}^{lm}\right\} $ given in (\ref{CG1})--(\ref{CG2}).
According to the above discussion, we know that: (i) $-l_{i}\leq m_{i}\leq
l_{i}$ for $i=1,2,$ (ii) $l_{2}-l_{1}$ $\leq l\leq l_{1}+l_{2}$, (iii) $%
-l\leq m\leq l$, and (iv) the symbols $\left( l_{1},m_{1},l_{2},m_{2}\right)
$ label rows, whereas the pairs $\left( l,m\right) $ are attached to
columns. Now introduce the total order $\prec _{c}$ on the \textquotedblleft
column pairs\textquotedblright\ $\left( l,m\right) $, by setting that $%
\left( l,m\right) \prec _{c}\left( l^{\prime },m^{\prime }\right) $,
whenever either $l<l^{\prime }$ or $l=l^{\prime }$ and $m<m^{\prime }$.
Analogously, introduce a total order $\prec _{r}$ over the \textquotedblleft
row symbols\textquotedblright\ $\left( l_{1},m_{1},l_{2},m_{2}\right) $, by
setting that $\left( l_{1},m_{1},l_{2},m_{2}\right) \prec _{r}\left(
l_{1}^{\prime },m_{1}^{\prime },l_{2}^{\prime },m_{2}^{\prime }\right) $, if
either $m_{1}<m_{1}^{\prime }$, or $m_{1}=m_{1}^{\prime }$ and $%
m_{2}<m_{2}^{\prime }$ (recall that $l_{1}$ and $l_{2}$ are fixed). One can
check that the set of column pairs (resp. row symbols) can now be written as
a \textsl{saturated chain}\footnote{%
Given a finite set $A=\left\{ a_{j}:j=1,...,N\right\} $ and an order $\prec $
on $A$, one says that $A$ is a \textsl{saturated chain} with respect to $%
\prec $ if there exists a permutation $\pi $ of $\left\{ 1,...,N\right\} $
such that
\begin{equation*}
a_{\pi \left( 1\right) }\prec a_{\pi \left( 2\right) }\prec \cdot \cdot
\cdot \prec a_{\pi \left( N-1\right) }\prec a_{\pi \left( N\right) }\text{.}
\end{equation*}%
In this case, $a_{\pi \left( 1\right) }$ and $a_{\pi \left( N\right) }$ are
called, respectively, the least and the maximal elements of the chain (see %
\cite[p. 99]{StaB})} with respect to $\prec _{c}$ (resp. $\prec _{r}$) with
a least element given by $\left( l_{2}-l_{1},-\left( l_{2}-l_{1}\right)
\right) $ (resp. $\left( l_{1},-l_{1},l_{2},-l_{2}\right) $) and a maximal
element given by $\left( l_{2}+l_{1},l_{2}+l_{1}\right) $ (resp. $\left(
l_{1},l_{1},l_{2},l_{2}\right) $). Then, (A) dispose the columns from west
to east, increasingly according to $\prec _{c}$, (B) dispose the rows from
north to south, increasingly according to $\prec _{r}$. For instance, by
setting $l_{1}=0$ and $l_{2}\geq 1$, one obtains that $C_{l_{1}l_{2}}$ is
the $\left( 2l_{2}+1\right) \times \left( 2l_{2}+1\right) $ square matrix $%
\left\{ C_{00l_{2}m_{2}}^{l_{2}m}\right\} $ with column indices $%
m=-(2l_{2}+1),...,(2l_{2}+1)$ and row indices $%
m_{2}=-(2l_{2}+1),...,(2l_{2}+1)$ (from the subsequent discussion, one also
deduces that, in general, $C_{00l_{2}m_{2}}^{lm}=\delta _{l}^{l_{2}}\delta
_{m}^{m_{2}}$). By selecting $l_{1}=l_{2}=1$, one sees that $C_{11}$ is the $%
9\times 9$ matrix with elements $C_{1m_{1}1m_{2}}^{lm}$ (for $%
m_{1},m_{2}=-1,0,1;$ $l=0,1,2,$ $m=-l,...,l)$ arranged as follows:%
\begin{equation*}
\left(
\begin{array}{ccccccccc}
\!C_{1,-1;1,-1}^{0,0}\! & \!C_{1,-1;1,-1}^{1,-1}\! & \!C_{1,-1;1,-1}^{10}\!
& \!C_{1,-1;1,-1}^{11}\! & \!C_{1,-1;1,-1}^{2,-2}\! & \!C_{1,-1;1,-1}^{2,-1}%
\! & \!C_{1,-1;1,-1}^{2,0}\! & \!C_{1,-1;1,-1}^{2,1}\! &
\!C_{1,-1;1,-1}^{2,2}\! \\
C_{1,-1;1,0}^{0,0} & ... & ... & ... & ... & ... & ... & ... & ... \\
C_{1,-1;1,1}^{0,0} & ... & ... & C_{1,-1;1,1}^{11} & ... & ... & ... & ... &
... \\
C_{1,0;1,-1}^{0,0} & ... & ... & ... & ... & ... & C_{1,0;1,-1}^{2,0} & ...
& ... \\
C_{1,0;1,0}^{0,0} & ... & ... & ... & C_{1,0;1,0}^{2,-2} & ... & ... & ... &
... \\
C_{1,0;1,1}^{0,0} & ... & ... & ... & ... & ... & ... & C_{1,0;1,1}^{2,1} &
... \\
C_{1,1;1,-1}^{0,0} & ... & ... & ... & ... & ... & ... & ... & ... \\
C_{1,1;1,0}^{0,0} & ... & ... & ... & ... & ... & ... & ... & ... \\
\!C_{1,1;1,1}^{0,0}\! & \!C_{1,1;1,1}^{1,-1}\! & \!C_{1,1;1,1}^{1,0}\! &
\!C_{1,1;1,1}^{1,1}\! & \!C_{1,1;1,1}^{2,-2}\! & \!C_{1,1;1,1}^{2,-1}\! &
\!C_{1,1;1,1}^{2,0}\! & \!C_{1,1;1,1}^{2,1}\! & \!C_{1,1;1,1}^{2,2}\!%
\end{array}%
\right)
\end{equation*}

\bigskip

\bigskip

Explicit expressions for the Clebsch-Gordan coefficients of $SO(3)$ are
known, but they are in general hardly manageable. We have for instance (see %
\cite{VMK}, expression 8.2.1.5)
\begin{align*}
C_{l_{1}m_{1}l_{2}m_{2}}^{l_{3}-m_{3}}& :=(-1)^{l_{1}+l_{3}+m_{2}}\sqrt{%
2l_{3}+1}\left[ \frac{%
(l_{1}+l_{2}-l_{3})!(l_{1}-l_{2}+l_{3})!(l_{1}-l_{2}+l_{3})!}{%
(l_{1}+l_{2}+l_{3}+1)!}\right] ^{1/2} \\
& \times \left[ \frac{(l_{3}+m_{3})!(l_{3}-m_{3})!}{%
(l_{1}+m_{1})!(l_{1}-m_{1})!(l_{2}+m_{2})!(l_{2}-m_{2})!}\right] ^{1/2} \\
& \times \sum_{z}\frac{(-1)^{z}(l_{2}+l_{3}+m_{1}-z)!(l_{1}-m_{1}+z)!}{%
z!(l_{2}+l_{3}-l_{1}-z)!(l_{3}+m_{3}-z)!(l_{1}-l_{2}-m_{3}+z)!}\text{ ,}
\end{align*}%
where the summation runs over all $z$'s such that the factorials are
non-negative. This expression becomes much neater for $m_{1}=m_{2}=m_{3}=0,$
where we have%
\begin{equation*}
C_{l_{1}0l_{2}0}^{l_{3}0}=\left\{
\begin{array}{c}
0\text{ , for }l_{1}+l_{2}+l_{3}\text{ odd} \\
(-1)^{\frac{l_{1}+l_{2}-l_{3}}{2}}\frac{\sqrt{2l_{3}+1}\left[
(l_{1}+l_{2}+l_{3})/2\right] !}{\left[ (l_{1}+l_{2}-l_{3})/2\right] !\left[
(l_{1}-l_{2}+l_{3})/2\right] !\left[ (-l_{1}+l_{2}+l_{3})/2\right] !}\left\{
\frac{(l_{1}+l_{2}-l_{3})!(l_{1}-l_{2}+l_{3})!(-l_{1}+l_{2}+l_{3})!}{%
(l_{1}+l_{2}+l_{3}+1)!}\right\} ^{1/2}\text{ , } \\
\text{for }l_{1}+l_{2}+l_{3}\text{ even}%
\end{array}%
\right. .
\end{equation*}%
The coefficients, moreover, enjoy a nice set of symmetry and orthogonality
properties, playing a crucial role in our results to follow. From unitary
equivalence we have the two relations:
\begin{eqnarray}
\sum_{m_{1},m_{2}}C_{l_{1}m_{1}l_{2}m_{2}}^{lm}C_{l_{1}m_{1}l_{2}m_{2}}^{l^{%
\prime }m^{\prime }} &=&\delta _{l}^{l\prime }\delta _{m}^{m\prime },
\label{ortho1} \\
\sum_{l,m}C_{l_{1}m_{1}l_{2}m_{2}}^{lm}C_{l_{1}m_{1}^{\prime
}l_{2}m_{2}^{\prime }}^{lm} &=&\delta _{m_{1}}^{m_{1}^{\prime }}\delta
_{m_{2}}^{m_{2}^{\prime }}\text{ ;}  \label{ortho2}
\end{eqnarray}%
in particular, (\ref{ortho1}) is a consequence of the orthogonality of row
vectors, whereas (\ref{ortho2}) comes from the orthogonality of columns.
Other properties are better expressed in terms of the Wigner's coefficients,
which are related to the Clebsch-Gordan coefficients by the identities (see %
\cite{VMK}, Chapter 8)%
\begin{eqnarray}
\left(
\begin{array}{ccc}
l_{1} & l_{2} & l_{3} \\
m_{1} & m_{2} & -m_{3}%
\end{array}%
\right) &=&(-1)^{l_{3}+m_{3}}\frac{1}{\sqrt{2l_{3}+1}}%
C_{l_{1}-m_{1}l_{2}-m_{2}}^{l_{3}m_{3}}  \label{clewig1} \\
C_{l_{1}m_{1}l_{2}m_{2}}^{l_{3}m_{3}} &=&(-1)^{l_{1}-l_{2}+m_{3}}\sqrt{%
2l_{3}+1}\left(
\begin{array}{ccc}
l_{1} & l_{2} & l_{3} \\
m_{1} & m_{2} & -m_{3}%
\end{array}%
\right) \text{ .}  \label{clewig2}
\end{eqnarray}

The Wigner's $3j$ (and, consequently, the Clebsch-Gordan) coefficients are
real-valued, they are different from zero only if $m_{1}+m_{2}+m_{3}=0$ and $%
l_{i}\leq l_{j}+l_{k}$ for all $i,j,k=1,2,3$ (\emph{triangle conditions}),
and they satisfy the symmetry conditions
\begin{equation*}
\left(
\begin{array}{ccc}
l_{1} & l_{2} & l_{3} \\
m_{1} & m_{2} & m_{3}%
\end{array}%
\right) =(-1)^{l_{1}+l_{2}+l_{3}}\left(
\begin{array}{ccc}
l_{1} & l_{2} & l_{3} \\
-m_{1} & -m_{2} & -m_{3}%
\end{array}%
\right) \text{ ,}
\end{equation*}

\begin{equation*}
\left(
\begin{array}{ccc}
l_{1} & l_{2} & l_{3} \\
m_{1} & m_{2} & m_{3}%
\end{array}%
\right) =(-1)^{sign(\pi )}\left(
\begin{tabular}{lll}
$l_{\pi (1)}$ & $l_{\pi (2)}$ & $l_{\pi (3)}$ \\
$m_{2}$ & $m_{3}$ & $m_{1}$%
\end{tabular}%
\right) \text{ ,}
\end{equation*}%
where $\pi $ is a permutation of $\left\{ 1,2,3\right\} $, and $sign\left(
\pi \right) $ denotes the sign of $\pi $. It follows also that for $%
m_{1}=m_{2}=m_{3}=0,$ the coefficients $C_{l_{1}0l_{2}0}^{l_{3}0}$ are
different from zero only when the sum $l_{1}+l_{2}+l_{3}$ is even. Later in
the paper, we shall also need the so-called Wigner's 6j coefficients, which
are defined by
\begin{equation}
\left\{
\begin{array}{ccc}
a & b & e \\
c & d & f%
\end{array}%
\right\} :=\sum_{\substack{ \alpha ,\beta ,\gamma  \\ \varepsilon ,\delta
,\phi }}(-1)^{e+f+\varepsilon +\phi }\left(
\begin{array}{ccc}
a & b & e \\
\alpha & \beta & \varepsilon%
\end{array}%
\right) \left(
\begin{array}{ccc}
c & d & e \\
\gamma & \delta & -\varepsilon%
\end{array}%
\right) \left(
\begin{array}{ccc}
a & d & f \\
\alpha & \delta & -\phi%
\end{array}%
\right) \left(
\begin{array}{ccc}
c & b & f \\
\gamma & \beta & \phi%
\end{array}%
\right) \text{ },  \label{6j1}
\end{equation}%
see \cite{VMK}, chapter 9 for analytic expressions and a full set of
properties; we simply recall here that the Wigner's 6j coefficients can
themselves be given an important interpretation in terms of group
representations, namely they relate different \emph{coupling schemes }in the
decomposition of tensor product into direct sum representations, see \cite%
{BieLou} for further details.

\bigskip

For future reference, we also recall some further standard properties of
Kronecker (tensor) products and direct sums of matrices: we have
\begin{eqnarray}
\oplus _{i=1}^{n}\left( A_{i}B_{i}\right) &=&\left( \oplus
_{i=1}^{n}A_{i}\right) \left( \oplus _{i=1}^{n}B_{i}\right) \text{ ,}
\label{KronSum} \\
\left( \oplus _{i=1}^{n}A_{i}\right) \otimes B &=&\oplus _{i=1}^{n}\left(
A_{i}\otimes B\right)  \label{KronDistr}
\end{eqnarray}%
and, provided all matrix products are well-defined,%
\begin{equation}
\left( AB\otimes C\right) =\left( A\otimes I_{n}\right) \left( B\otimes
C\right) \text{ .}  \label{KronDec}
\end{equation}%
Here, $\oplus _{i=1}^{n}A_{i}$ is defined as the block diagonal matrix $%
diag\left\{ A_{1},...,A_{n}\right\} $ if $A_{i}$ is a set of square matrices
of order $r_{i}\times r_{i}$, whereas it is defined as the stacked column
vector of order $\left( \sum_{i=1}^{n}r_{i}\right) \times 1$ if the $A_{i}$
are $r_{i}\times 1$ column vectors.

\section{Characterization of polyspectra\label{S : POLY}}

\subsection{Four general statements}

The following result is well-known. As it is crucial in our arguments to
follow and we failed to locate any explicit reference, we shall provide a
short proof for the sake of completeness. Note that, in the sequel, we use
the symbol $a_{l.}$ to indicate the $\left( 2l+1\right) $-dimensional
complex-valued random vector defined in (\ref{vecnot})$.$

\begin{lemma}
\label{L : RandomWigner}Let $T$ be a strongly isotropic field on $S^{2}$,
and let the harmonic coefficients $\left\{ a_{lm}\right\} $ be defined
according to (\ref{specrap}). Then, for every $l\geq 0$ and every $g\in
SO(3) $, we have%
\begin{equation}
D^{l}(g)a_{l.}\overset{d}{=}a_{l.}\text{ , }l=0,1,2,...\text{ .}  \label{uu}
\end{equation}%
The equality (\ref{uu}) must be understood in the sense of
finite-dimensional distributions for sequences of random vectors, that is, (%
\ref{uu}) takes place if, and only if, for every $k\geq 1$ and every $0\leq
l_{1}<l_{2}<\cdot \cdot \cdot <l_{k}$,%
\begin{equation}
\left\{ D^{l_{1}}(g)a_{l_{1}.},...,D^{l_{k}}(g)a_{l_{k}.}\right\} \overset{d}%
{=}\left\{ a_{l_{1}.},...,a_{l_{k}.}\right\} .  \label{uuu}
\end{equation}
\end{lemma}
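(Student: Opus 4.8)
The plan is to establish the distributional identity $D^l(g)a_{l\cdot} \overset{d}{=} a_{l\cdot}$ by connecting the action of the rotation $g$ on the harmonic coefficients to the action on the field itself, and then invoking strong isotropy. The crucial starting point is to understand how the coefficients $a_{lm}$ transform under a rotation of the underlying field. Concretely, I would first compute the harmonic coefficients of the rotated field $T^g(x) := T(g^{-1}x)$ (or $T(gx)$, with care about conventions). Using the defining integral $a_{lm} = \int_{S^2} T(x)\overline{Y_{lm}(x)}\,dx$ together with the fundamental transformation law of spherical harmonics under rotation, namely $Y_{lm}(g^{-1}x) = \sum_{n} D^l_{nm}(g)\,Y_{ln}(x)$ (which is exactly the statement that the $\{Y_{ln}\}_n$ span an irreducible representation space on which $SO(3)$ acts via the Wigner matrix $D^l(g)$), one finds that the coefficients of the rotated field are precisely the components of $D^l(g)a_{l\cdot}$. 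This is the computational heart of the argument and should be carried out with attention to whether one gets $D^l$ or its conjugate/transpose, since the relation \eqref{spherwig} linking $Y_{lm}$ to the central column of $D^l$ fixes the convention.

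Once this transformation law is in hand, the distributional statement follows almost immediately from strong isotropy. Strong isotropy \eqref{idISO} asserts that the random field $T$ and its rotate $T^g$ have the same finite-dimensional distributions; equivalently, as random elements of $L^2(S^2)$, they are equal in law. Since the map $T \mapsto \{a_{lm}\}$ sending a square-integrable path to its array of harmonic coefficients is a fixed (deterministic, continuous) linear functional, equality in law of the fields transfers to equality in law of their coefficient arrays. Thus the array $\{D^l(g)a_{l\cdot}\}_l$ coming from $T^g$ has the same law as the array $\{a_{l\cdot}\}_l$ coming from $T$, which is exactly \eqref{uuu} in its full finite-dimensional form across all frequencies $l_1 < \cdots < l_k$ simultaneously. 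The single-frequency statement \eqref{uu} is then the special case, and the lemma's own clarification that \eqref{uu} is to be read in the sense of \eqref{uuu} means there is nothing further to check.

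I would expect the main obstacle to be bookkeeping rather than conceptual: getting the rotation convention exactly right so that the random vector transforms by $D^l(g)$ itself (and not by $\overline{D^l(g)}$, $D^l(g)^{*}$, or $D^l(g^{-1})$). This requires pinning down the active-versus-passive interpretation of the $SO(3)$ action on $S^2$ and propagating it consistently through the transformation rule for $Y_{lm}$ and through complex conjugation in the integral defining $a_{lm}$. A secondary technical point is justifying the interchange needed to pass from ``fields equal in law'' to ``coefficient arrays equal in law'': one should note that strong isotropy gives equality of finite-dimensional distributions of the field, and that each $a_{lm}$ is an $L^2$-limit of finite linear combinations $\sum_j c_j T(x_j)$ (Riemann-sum approximations of the integral), so the continuous-mapping/approximation argument yields joint equality in distribution of any finite collection of the $a_{lm}$. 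With the convention fixed and this measurability remark recorded, the proof is short, and indeed the excerpt flags it as ``well-known,'' included only for completeness.
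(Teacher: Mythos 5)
Your proposal is correct and takes essentially the same route as the paper's own proof: both arguments identify the harmonic coefficients of the rotated field as $D^{l}(g)a_{l.}$ via the transformation law $Y_{lm}(gx)=\sum_{m^{\prime }}D_{m^{\prime }m}^{l}(g)Y_{lm^{\prime }}(x)$ (equivalently, the group representation property together with (\ref{spherwig})), and then invoke strong isotropy (\ref{idISO}) to conclude that the coefficient arrays of $T$ and of its rotate are equal in law. Your closing remark on justifying the passage from equality in law of the fields to equality in law of the coefficients, by viewing each $a_{lm}$ as an $L^{2}$-limit of finite linear combinations of field values, makes explicit a step the paper leaves implicit, but it is a refinement of the same argument rather than a different approach.
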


\begin{proof}
We provide the proof of (\ref{uuu}) only when $k=1$ and $l_{1}=l\geq 1$. The
general case is obtained analogously. By strong isotropy, we have that, for
every $l\geq 1$, every $g\in SO\left( 3\right) $ and every $%
x_{1},...,x_{n}\in S^{2}$, the equality (\ref{idISO}) takes place. Now, (\ref%
{idISO}) can be rewritten as follows:%
\begin{eqnarray}
&&\left\{
\sum_{l}\sum_{m}a_{lm}Y_{lm}(x_{1}),...,\sum_{l}\sum_{m}a_{lm}Y_{lm}(x_{n})%
\right\} \overset{d}{=}\left\{
\sum_{l}\sum_{m}a_{lm}Y_{lm}(gx_{1}),...,\sum_{l}%
\sum_{m}a_{lm}Y_{lm}(gx_{n})\right\}  \notag \\
&=&\left\{ \sum_{l}\sum_{m}a_{lm}\sum_{m^{\prime }}D_{m^{\prime
}m}^{l}(g)Y_{lm^{\prime }}(x_{1}),...,\sum_{l}\sum_{m}a_{lm}\sum_{m^{\prime
}}D_{m^{\prime }m}^{l}(g)Y_{lm^{\prime }}(x_{n})\right\}  \notag \\
&=&\left\{ \sum_{l}\sum_{m^{\prime }}\widetilde{a}_{lm^{\prime
}}Y_{lm^{\prime }}(x_{1}),...,\sum_{l}\sum_{m^{\prime }}\widetilde{a}%
_{lm^{\prime }}Y_{lm^{\prime }}(x_{n})\right\} \text{ ,}  \label{j}
\end{eqnarray}%
where we write%
\begin{equation}
\widetilde{a}_{lm^{\prime }}\triangleq \sum_{m}a_{lm}D_{m^{\prime }m}^{l}(g)%
\text{,}  \label{jj}
\end{equation}%
and we have used
\begin{equation}
\left\{ Y_{lm}(gx_{1}),...,Y_{lm}(gx_{n})\right\} \equiv \left\{
\sum_{m^{\prime }}D_{m^{\prime }m}^{l}(g)Y_{lm^{\prime
}}(x_{1}),...,\sum_{m^{\prime }}D_{m^{\prime }m}^{l}(g)Y_{lm^{\prime
}}(x_{n})\right\} .  \label{jjj}
\end{equation}%
which follows from the group representation property and the identity (\ref%
{spherwig}). To conclude, just observe that (\ref{j}) implies that%
\begin{equation*}
\widetilde{a}_{lm^{\prime }}=\int_{S^{2}}T\left( gx\right) \overline{%
Y_{lm^{\prime }}\left( x\right) }dx\text{, \ \ }m^{\prime }=-l,...,l\text{,}
\end{equation*}%
yielding that, due to strong isotropy and\ with obvious notation, $%
\widetilde{a}_{l.}\overset{d}{=}a_{l.}$. The conclusion follows from the
fact that, thanks to (\ref{jj}),
\begin{equation*}
\widetilde{a}_{l.}=D^{l}\left( g\right) a_{l.}\text{.}
\end{equation*}
\end{proof}

The next theorem connects the invariance properties of the vectors $\left\{
a_{l.}\right\} $ to the representations of $SO(3)$. We need first to
establish some notation. For every $0\leq l_{1},l_{2},...,l_{n}$, we shall
write
\begin{eqnarray}
\Delta _{l_{1}...l_{n}} &\triangleq &\int_{SO(3)}\left\{ D^{l_{1}}(g)\otimes
D^{l_{2}}(g)\otimes ...\otimes D^{l_{n}}(g)\right\} dg\text{ ,}  \label{not1}
\\
\Delta _{l_{1}...l_{n}}\left( g\right) &\triangleq &D^{l_{1}}(g)\otimes
D^{l_{2}}(g)\otimes ...\otimes D^{l_{n}}(g)\text{, \ \ \ }g\in SO\left(
3\right) \text{,}  \label{notN2}
\end{eqnarray}%
and use the symbol $S_{l_{1}...l_{n}}$ (whenever is well-defined), as given
in formula (\ref{p}). We stress that $\Delta _{l_{1}...l_{n}}$ and $\Delta
_{l_{1}...l_{n}}\left( g\right) $ are square matrices with $(2l_{1}+1)\times
...\times (2l_{n}+1)$ rows and $S_{l_{1}...l_{n}}$ is a column vector with $%
(2l_{1}+1)\times ...\times (2l_{n}+1)$ elements. \ The following result
applies to an arbitrary $n\geq 2$: see \cite{Hu} for some related results in
the case $n=3,4.$

\begin{proposition}
\label{propa} Let $T$ be a strongly isotropic field with moments of order $%
n\geq 2$. Then, for every $0\leq l_{1},l_{2},...,l_{n}$ and every fixed $%
g^{\ast }\in SO\left( 3\right) $%
\begin{eqnarray}
\Delta _{l_{1}...l_{n}}S_{l_{1}...l_{n}} &=&S_{l_{1}...l_{n}}\text{ }
\label{crurel} \\
\Delta _{l_{1}...l_{n}}\left( g^{\ast }\right) S_{l_{1}...l_{n}}
&=&S_{l_{1}...l_{n}}.  \label{crurel2}
\end{eqnarray}%
On the other hand, fix $n\geq 2$ and assume that $T(x)$ is a not necessarily
isotropic random field on the sphere s.t. $\sup_{x}\left( E\left\vert
T(x)\right\vert ^{n}\right) <\infty $. Then $T\left( .\right) $ is $P$%
-almost surely Lebesgue square integrable and the $n$th order spectral
moments of $T$ exist and are finite. If moreover (\ref{crurel}) holds for
every $0\leq l_{1}\leq \cdot \cdot \cdot \leq l_{n}$, then one has that, for
every $g\in SO\left( 3\right) $,%
\begin{equation}
E\left[ D^{l_{1}}(g)a_{l_{1}.}\otimes \cdot \cdot \cdot \otimes
D^{l_{n}}(g)a_{l_{n}.}\right] =E\left[ a_{l_{1}.}\otimes \cdot \cdot \cdot
\otimes a_{l_{n}.}\right] \text{,}  \label{uuww}
\end{equation}%
and $T$ is $n$-weakly isotropic.
\end{proposition}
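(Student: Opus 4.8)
The plan is to exploit Lemma \ref{L : RandomWigner} as the central engine. For the first (isotropic) part, I would begin with the random-vector identity $D^{l}(g)a_{l.}\overset{d}{=}a_{l.}$ supplied by the Lemma, upgraded to the joint form \eqref{uuu}, and take tensor products. Since equality in distribution is preserved under the continuous (multilinear) map $(v_{1},\dots,v_{n})\mapsto v_{1}\otimes\cdots\otimes v_{n}$, I obtain
\begin{equation*}
\Delta_{l_{1}\dots l_{n}}(g)\,\bigl(a_{l_{1}.}\otimes\cdots\otimes a_{l_{n}.}\bigr)\overset{d}{=}a_{l_{1}.}\otimes\cdots\otimes a_{l_{n}.}\text{ ,}
\end{equation*}
using the tensor-product mixed identity $D^{l_{1}}(g)v_{1}\otimes\cdots\otimes D^{l_{n}}(g)v_{n}=\Delta_{l_{1}\dots l_{n}}(g)\,(v_{1}\otimes\cdots\otimes v_{n})$. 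Taking expectations of both sides and using the finiteness of the $n$th spectral moments (so the expectation is well-defined and the deterministic matrix $\Delta_{l_{1}\dots l_{n}}(g)$ pulls out) gives exactly \eqref{crurel2}. Then \eqref{crurel} follows by integrating \eqref{crurel2} over $g\in SO(3)$ against the normalized Haar measure: the right-hand side $S_{l_{1}\dots l_{n}}$ is constant in $g$, while the left-hand side integrates to $\Delta_{l_{1}\dots l_{n}}S_{l_{1}\dots l_{n}}$ by definition \eqref{not1}, provided I may interchange the Haar integral with the expectation, which is justified by Fubini and the uniform moment bound.

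For the converse (non-isotropic) part, the square-integrability of the path and finiteness of the spectral moments follow from the sufficient condition $\sup_{x}ET(x)^{2}<\infty$ noted after \eqref{pathrep}, together with the fact that the $a_{lm}$ are bounded linear functionals of $T$ so that $E|a_{lm}|^{n}<\infty$ whenever $\sup_{x}E|T(x)|^{n}<\infty$ (via Jensen and Minkowski on the defining integral in \eqref{specrap}). Assuming \eqref{crurel} holds for all $0\le l_{1}\le\cdots\le l_{n}$, I would reverse the first computation: \eqref{crurel} reads $\Delta_{l_{1}\dots l_{n}}S_{l_{1}\dots l_{n}}=S_{l_{1}\dots l_{n}}$, and I want to promote this Haar-averaged identity to the pointwise-in-$g$ statement \eqref{uuww}. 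The key observation is that $\Delta_{l_{1}\dots l_{n}}$ is the Haar average of the unitary representation $g\mapsto\Delta_{l_{1}\dots l_{n}}(g)$, hence it is the orthogonal projection onto the subspace of invariant vectors; so $\Delta_{l_{1}\dots l_{n}}v=v$ forces $v$ to be a genuine invariant vector, i.e. $\Delta_{l_{1}\dots l_{n}}(g)v=v$ for every $g$. Applying this with $v=S_{l_{1}\dots l_{n}}=E[a_{l_{1}.}\otimes\cdots\otimes a_{l_{n}.}]$ yields \eqref{uuww} directly, after moving the deterministic $\Delta_{l_{1}\dots l_{n}}(g)$ inside the expectation.

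Finally, to deduce $n$-weak isotropy from \eqref{uuww}, I would express each product $T(gx_{1})\times\cdots\times T(gx_{n})$ through the spectral representation \eqref{specrap}, mirroring the algebra in the proof of Lemma \ref{L : RandomWigner}: using $Y_{lm}(gx)=\sum_{m'}D^{l}_{m'm}(g)Y_{lm'}(x)$ one writes $E[T(gx_{1})\cdots T(gx_{n})]$ as a multilinear form in the rotated coefficients, whose core is precisely the left-hand entries of \eqref{uuww}, while $E[T(x_{1})\cdots T(x_{n})]$ gives the right-hand side; equality \eqref{uuww} then matches them term by term after summing against the spherical-harmonic factors. The main obstacle I anticipate is the invariant-vector step in the converse: one must argue cleanly that the averaged fixed-point equation $\Delta v=v$ upgrades to invariance under every $g$, which requires invoking that $\Delta$ is the Haar projector of a unitary representation (equivalently, that $\|\Delta_{l_{1}\dots l_{n}}(g)v\|=\|v\|$ with $\Delta v=v$ forcing equality in an averaging inequality). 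The interchange-of-integrals and the passage from equality in law to equality of (mixed) moments are routine by comparison, secured by the uniform $n$th-moment bound.
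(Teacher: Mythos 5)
Your proposal is correct and follows essentially the same route as the paper: both directions rest on Lemma \ref{L : RandomWigner}, the mixed-product property of Kronecker products, the bound $E|a_{lm}|^{n}\leq\left(\frac{2l+1}{4\pi}\right)^{n/2}\sup_{x}E|T(x)|^{n}$, and the left-invariance of the Haar measure. The differences are only in packaging: the paper derives (\ref{crurel}) and (\ref{crurel2}) simultaneously by sampling the rotation $g$ at random (Haar measure, respectively a Dirac mass at $g^{\ast}$) independently of the $a_{l.}$, and in the converse it establishes your abstract ``Haar average is the projector onto invariant vectors'' step by the explicit computation $\left[D^{l_{1}}(\overline{g})\otimes\cdots\otimes D^{l_{n}}(\overline{g})\right]\Delta_{l_{1}...l_{n}}=\Delta_{l_{1}...l_{n}}$, which is exactly the change-of-variables argument your projection claim encapsulates.
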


\begin{proof}
By strong\ isotropy and Lemma \ref{L : RandomWigner}, one has%
\begin{equation*}
E\left\{ D^{l_{1}}(g)a_{l_{1}.}\otimes ...\otimes
D^{l_{n}}(g)a_{l_{n}.}\right\} =E\left\{ a_{l_{1}.}\otimes ...\otimes
a_{l_{n}.}\right\} \text{ for all }g\in SO(3)\text{ , }l_{1},...,l_{n}\in
\mathbb{N}^{n}.
\end{equation*}%
Now assume that $g$ is sampled randomly (and independently of the $\left\{
a_{l.}\right\} $) according to some probability measure, say $P_{0}$, on $%
SO(3).$ From the property (\ref{KronDec}) of tensor products and trivial
manipulations, we obtain (with obvious notation and by independence)%
\begin{eqnarray*}
E\left\{ D^{l_{1}}(\cdot )a_{l_{1}.}\otimes ...\otimes D^{l_{n}}(\cdot
)a_{l_{n}.}\right\} &=&E\left\{ \left[ D^{l_{1}}(\cdot )\otimes ...\otimes
D^{l_{n}}(\cdot )\right] \left[ a_{l_{1}.}\otimes ...\otimes a_{l_{n}.}%
\right] \right\} \\
&=&E_{0}\left\{ D^{l_{1}}(\cdot )\otimes ...\otimes D^{l_{n}}(\cdot
)\right\} E\left\{ a_{l_{1}.}\otimes ...\otimes a_{l_{n}.}\right\} .
\end{eqnarray*}%
Now, if one chooses $P_{0}$ to be equal to the Haar (uniform) measure on $%
SO\left( 3\right) $, one has that
\begin{equation*}
E_{0}\left\{ D^{l_{1}}(\cdot )\otimes ...\otimes D^{l_{n}}(\cdot )\right\}
=\Delta _{l_{1}...l_{n}}\text{,}
\end{equation*}%
thus giving (\ref{crurel}). On the other hand, if one chooses $P_{0}$ to be
equal to the Dirac mass at some $g^{\ast }\in SO\left( 3\right) $, one has
that%
\begin{equation*}
E_{0}\left\{ D^{l_{1}}(\cdot )\otimes ...\otimes D^{l_{n}}(\cdot )\right\}
=\Delta _{l_{1}...l_{n}}\left( g^{\ast }\right) \text{,}
\end{equation*}%
which shows that (\ref{crurel2}) is satisfied.

\noindent Now let $T$ satisfy the assumptions of the second part of the
statement for some $n\geq 2$. We recall first that the representation (\ref%
{specrap}) continues to hold, in a pathwise sense. To see that the $n$th
order joint moments of the harmonic coefficients $a_{lm}$ are finite it is
enough to use Jensen's inequality, along with a standard version of the
Fubini theorem, to obtain that%
\begin{eqnarray*}
E\left\vert a_{lm}\right\vert ^{n} &=&E\left\vert \int_{S^{2}}T(x)\overline{%
Y_{lm}(x)}dx\right\vert ^{n}\leq E\int_{S^{2}}|T(x)|^{n}|Y_{lm}(x)|^{n}dx \\
&\leq &\left\{ \sup_{x\in S^{2}}|Y_{lm}(x)|^{n}\right\} \left\{ \sup_{x\in
S^{2}}E|T(x)|^{n}\right\} \\
&\leq &\left( \frac{2l+1}{4\pi }\right) ^{n/2}\left\{ \sup_{x\in
S^{2}}E|T(x)|^{n}\right\} <\infty \text{ .}
\end{eqnarray*}%
It is then straightforward that, if $S_{l_{1}...l_{n}}$ satisfies (\ref%
{crurel}), one also has that for any fixed $\overline{g}\in SO(3)$%
\begin{eqnarray*}
&&E\left\{ \left[ D^{l_{1}}(\overline{g})\otimes ...\otimes D^{l_{n}}(%
\overline{g})\right] \left[ a_{l_{1}.}\otimes ...\otimes a_{l_{n}.}\right]
\right\} \\
&=&\left[ D^{l_{1}}(\overline{g})\otimes ...\otimes D^{l_{n}}(\overline{g})%
\right] E\left[ a_{l_{1}.}\otimes ...\otimes a_{l_{n}.}\right] \\
&=&\left[ D^{l_{1}}(\overline{g})\otimes ...\otimes D^{l_{n}}(\overline{g})%
\right] \Delta _{l_{1}...l_{n}}S_{l_{1}...l_{n}} \\
&=&\left\{ \left[ D^{l_{1}}(\overline{g})\otimes ...\otimes D^{l_{n}}(%
\overline{g})\right] \int_{SO(3)}\left\{ D^{l_{1}}(g)\otimes ...\otimes
D^{l_{n}}(g)\right\} dg\right\} S_{l_{1}...l_{n}} \\
&=&\left\{ \int_{SO(3)}\left\{ D^{l_{1}}(\overline{g}g)\otimes D^{l_{2}}(%
\overline{g}g)\otimes ...\otimes D^{l_{n}}(\overline{g}g)\right\} dg\right\}
S_{l_{1}...l_{n}} \\
&=&\Delta _{l_{1}...l_{n}}S_{l_{1}...l_{n}}=E\left\{ a_{l_{1}.}\otimes
...\otimes a_{l_{n}.}\right\} \text{ ,}
\end{eqnarray*}%
which proves the $n$-th spectral moment is invariant to rotations. The fact
that $T$ is $n$-weakly isotropic is a consequence of the spectral
representation (\ref{specrap}).
\end{proof}

\bigskip

Note that relation (\ref{crurel}) can be rephrased by saying that, for a
strongly isotropic field, the joint moment vector $E\left\{
a_{l_{1}.}\otimes a_{l_{2}.}\otimes ...\otimes a_{l_{n}.}\right\} $ must be
an eigenvector of the matrix (\ref{not1}) for every $n\geq 2$ and every $%
0\leq l_{1}\leq \cdot \cdot \cdot \leq l_{n}$. A similar characterization
holds for cumulants polyspectra. Recall the notation $S_{l_{1}...l_{n}}^{c}$
introduced in (\ref{pp}).

\begin{proposition}
\label{P : Cum_as_eig} Let $T$ be a strongly isotropic field with moments of
order $n\geq 2$. Then, for every $0\leq l_{1},l_{2},...,l_{n}$ and every
fixed $g^{\ast }\in SO\left( 3\right) $,
\begin{eqnarray}
\Delta _{l_{1}...l_{n}}S_{l_{1}...l_{n}}^{c} &=&S_{l_{1}...l_{n}}^{c}\text{ }
\label{Spectre3} \\
\Delta _{l_{1}...l_{n}}\left( g^{\ast }\right) S_{l_{1}...l_{n}}^{c}
&=&S_{l_{1}...l_{n}}^{c}.  \label{Spectre 4}
\end{eqnarray}%
On the other hand, fix $n\geq 2$ and assume that $T(x)$ is a not necessarily
isotropic random field on the sphere s.t. $\sup_{x}\left( E\left\vert
T(x)\right\vert ^{n}\right) <\infty $. Then $T\left( .\right) $ is $P$%
-almost surely Lebesgue square integrable and the $n$th order spectral
moments of $T$ exist and are finite. If moreover (\ref{Spectre3}) holds for
every $0\leq l_{1}\leq \cdot \cdot \cdot \leq l_{n}$, then one has that, for
every $g\in SO\left( 3\right) $, relation (\ref{uuww}) holds, and $T$ is $n$%
-weakly isotropic.
\end{proposition}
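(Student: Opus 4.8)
The plan is to reduce the statement to the \emph{multilinearity} of joint cumulants, which yields a transformation rule exactly parallel to the one used for moments in Proposition \ref{propa}. The key algebraic fact I would record first is that, for complex random vectors $X_{1},\dots ,X_{n}$ with finite moments of order $n$ and deterministic complex matrices $A_{1},\dots ,A_{n}$ of compatible sizes,
\begin{equation*}
\mathrm{Cum}\left( A_{1}X_{1},\dots ,A_{n}X_{n}\right) =\left( A_{1}\otimes \cdots \otimes A_{n}\right) \,\mathrm{Cum}\left( X_{1},\dots ,X_{n}\right) \text{,}
\end{equation*}
where $\mathrm{Cum}(X_{1},\dots ,X_{n})$ is the cumulant tensor with entries $\mathrm{Cum}((X_{1})_{j_{1}},\dots ,(X_{n})_{j_{n}})$ read as a column vector. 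This follows at once from $(A_{i}X_{i})_{k_{i}}=\sum_{j_{i}}(A_{i})_{k_{i}j_{i}}(X_{i})_{j_{i}}$ and the multilinearity of joint cumulants. Specialising to $X_{i}=a_{l_{i}.}$ and $A_{i}=D^{l_{i}}(g)$ gives, for every $g\in SO(3)$,
\begin{equation*}
\mathrm{Cum}\left( D^{l_{1}}(g)a_{l_{1}.},\dots ,D^{l_{n}}(g)a_{l_{n}.}\right) =\Delta _{l_{1}\dots l_{n}}(g)\,S_{l_{1}\dots l_{n}}^{c}\text{.}
\end{equation*}

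For the direct part I would mimic Proposition \ref{propa}, but pointwise in $g$ rather than by averaging (cumulants do not factor under randomisation of $g$). Since joint cumulants are functionals of the joint law, Lemma \ref{L : RandomWigner}---applied to the distinct frequencies occurring among $l_{1},\dots ,l_{n}$, any repeated frequency contributing the same transformed vector---gives $\mathrm{Cum}(D^{l_{1}}(g)a_{l_{1}.},\dots ,D^{l_{n}}(g)a_{l_{n}.})=\mathrm{Cum}(a_{l_{1}.},\dots ,a_{l_{n}.})=S_{l_{1}\dots l_{n}}^{c}$ for every $g$. Combined with the transformation rule this is exactly $\Delta _{l_{1}\dots l_{n}}(g)S_{l_{1}\dots l_{n}}^{c}=S_{l_{1}\dots l_{n}}^{c}$, i.e. (\ref{Spectre 4}). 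Integrating over $g$ against the normalised Haar measure and invoking definition (\ref{not1}) yields $\Delta _{l_{1}\dots l_{n}}S_{l_{1}\dots l_{n}}^{c}=S_{l_{1}\dots l_{n}}^{c}$, which is (\ref{Spectre3}).

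For the converse, the finiteness of the $n$th spectral moments and the $P$-a.s. Lebesgue square integrability of $T$ (hence the pathwise validity of (\ref{specrap})) follow from the same Jensen--Fubini estimate as in Proposition \ref{propa}. Assuming (\ref{Spectre3}), I would first upgrade it to (\ref{Spectre 4}): for fixed $\bar{g}$, the representation property of the Wigner matrices and the multiplicativity of Kronecker products give $\Delta _{l_{1}\dots l_{n}}(\bar{g})\Delta _{l_{1}\dots l_{n}}(g)=\Delta _{l_{1}\dots l_{n}}(\bar{g}g)$, so by left-invariance of Haar measure $\Delta _{l_{1}\dots l_{n}}(\bar{g})S_{l_{1}\dots l_{n}}^{c}=\Delta _{l_{1}\dots l_{n}}(\bar{g})\Delta _{l_{1}\dots l_{n}}S_{l_{1}\dots l_{n}}^{c}=\Delta _{l_{1}\dots l_{n}}S_{l_{1}\dots l_{n}}^{c}=S_{l_{1}\dots l_{n}}^{c}$. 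Via the transformation rule this says that the joint cumulants of $\{D^{l_{i}}(\bar{g})a_{l_{i}.}\}$ and $\{a_{l_{i}.}\}$ agree at order $n$, and likewise at every lower order once the hypothesis is applied to the corresponding sub-tuples. I would then pass to moments through the moment--cumulant formula (the inverse of (\ref{pp})): the order-$n$ moment vector $E[a_{l_{1}.}\otimes \cdots \otimes a_{l_{n}.}]$ is a fixed universal polynomial in the joint cumulants of order $\le n$, so that agreement of all these cumulants forces agreement of the moments, i.e. (\ref{uuww}); $n$-weak isotropy then follows from (\ref{specrap}) as in Proposition \ref{propa}.

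The main obstacle is precisely this last passage. Unlike the moment statement of Proposition \ref{propa}, where invariance is self-contained at order $n$, the moment--cumulant expansion of an order-$n$ moment genuinely involves cumulants of all lower orders (for $n\ge 4$, e.g. products of covariances). One must therefore know that the cumulants of $\{D^{l_{i}}(\bar{g})a_{l_{i}.}\}$ and $\{a_{l_{i}.}\}$ agree at \emph{every} order $\le n$, not merely at order $n$; accordingly I would read the hypothesis as asserting (\ref{Spectre3}) for every ordered sub-tuple of $(l_{1},\dots ,l_{n})$, which are exactly the cumulant polyspectra attached to the blocks of the partitions in (\ref{pp}). Working componentwise with the scalar cumulants $\mathrm{Cum}(a_{l_{1}m_{1}},\dots ,a_{l_{n}m_{n}})$ rather than with the tensor vectors also lets me sidestep the bookkeeping of the permutation of tensor factors induced by the lexicographic listing of blocks, reducing the claim to the elementary fact that two families of scalars with identical joint cumulants up to order $n$ have identical joint moments up to order $n$.
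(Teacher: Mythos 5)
Your proof of the direct part is correct but routes differently from the paper's. The paper goes back to the field itself: by strong isotropy, $\mathrm{Cum}\left\{ T(x_{1}),...,T(x_{n})\right\} =\mathrm{Cum}\left\{ T(gx_{1}),...,T(gx_{n})\right\} $; it then expands both sides by multilinearity in the harmonic coefficients, applies $Y_{lm}(gx)=\sum_{m^{\prime }}D_{m^{\prime }m}^{l}(g)Y_{lm^{\prime }}(x)$, and identifies coefficients in the expansion. Your argument instead stays at the level of the coefficient vectors, combining Lemma \ref{L : RandomWigner} (cumulants are functionals of the joint law, which is invariant) with the algebraic rule $\mathrm{Cum}\left( A_{1}X_{1},...,A_{n}X_{n}\right) =\left( A_{1}\otimes \cdots \otimes A_{n}\right) \mathrm{Cum}\left( X_{1},...,X_{n}\right) $. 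This makes the cumulant statement exactly parallel to Proposition \ref{propa}, and your observation that one must establish (\ref{Spectre 4}) pointwise in $g$ first and only then integrate over the Haar measure --- because cumulants, unlike moments, do not factor under an independent randomisation of $g$ --- is correct, and explains why the paper's randomisation device from Proposition \ref{propa} is not reused in its proof of this proposition either.

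On the converse you have put your finger on a genuine issue which the paper hides behind the phrase \textquotedblleft arguments analogous to the ones used in the proof of Proposition \ref{propa}\textquotedblright . The analogue of that computation yields only that the order-$n$ cumulants of $\left\{ D^{l_{i}}(\bar{g})a_{l_{i}.}\right\} $ and $\left\{ a_{l_{i}.}\right\} $ coincide; passing from this to (\ref{uuww}) requires inverting the moment--cumulant relation, which involves cumulants of \emph{every} order $\leq n$, and order-$n$ cumulant invariance alone does not control the lower orders. The gap is real, not cosmetic: take $T(x)=\xi f(x)$ with $\xi $ a standard Gaussian variable and $f$ a non-constant deterministic function; then all joint cumulants of the $a_{lm}$ of order $3$ and $4$ vanish, so (\ref{Spectre3}) holds trivially for $n=4$, yet $E\left[ T(x)^{4}\right] =3f(x)^{4}$ is not rotation invariant, so neither (\ref{uuww}) nor $4$-weak isotropy can hold. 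Your repair --- reading the hypothesis as (\ref{Spectre3}) for every sub-tuple of $\left( l_{1},...,l_{n}\right) $, i.e. cumulant invariance at every order $\leq n$, and then working componentwise with scalar cumulants to avoid the block-permutation bookkeeping in (\ref{pp}) --- is the right one, and with it your converse argument (pointwise upgrade via $\Delta _{l_{1}...l_{n}}\left( \bar{g}\right) \Delta _{l_{1}...l_{n}}=\Delta _{l_{1}...l_{n}}$, then moment--cumulant inversion) is complete. So your proposal is not only correct; on the converse it is more careful than the paper itself.
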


\begin{proof}
For every $x_{1},...,x_{n}\in S^{2}$, write $\mathrm{Cum}\left\{ T\left(
x_{1}\right) ,...,T\left( x_{n}\right) \right\} $ the joint cumulant of the
random variables $T\left( x_{1}\right) ,...,T\left( x_{n}\right) $. By using
isotropy, one has that, for every $g\in SO\left( 3\right) $,%
\begin{equation}
\mathrm{Cum}\left\{ T\left( x_{1}\right) ,...,T\left( x_{n}\right) \right\} =%
\mathrm{Cum}\left\{ T\left( gx_{1}\right) ,...,T\left( gx_{n}\right)
\right\} .  \label{coo}
\end{equation}%
Hence, by using the well-known multilinearity properties of cumulants, one
deduces that (with obvious notation)%
\begin{eqnarray}
&&\mathrm{Cum}\left\{ T\left( x_{1}\right) ,...,T\left( x_{n}\right) \right\}
\notag \\
&=&\sum_{l_{1}m_{1},...,l_{n}m_{n}}\mathrm{Cum}\left\{
a_{l_{1}m_{1}},...,a_{l_{n}m_{n}}\right\} Y_{l_{1}m_{1}}\left( x_{1}\right)
\cdot \cdot \cdot Y_{l_{n}m_{n}}\left( x_{n}\right)  \notag \\
&=&\sum_{l_{1}m_{1},...,l_{n}m_{n}}\mathrm{Cum}\left\{
a_{l_{1}m_{1}},...,a_{l_{n}m_{n}}\right\} Y_{l_{1}m_{1}}\left( gx_{1}\right)
\cdot \cdot \cdot Y_{l_{n}m_{n}}\left( gx_{n}\right) ,
\label{penitenziagite}
\end{eqnarray}%
and relations (\ref{Spectre3})--(\ref{Spectre 4}) are deduced by rewriting (%
\ref{penitenziagite}) by means of the identity%
\begin{equation*}
\left\{ Y_{l_{1}m_{1}}(gx_{1}),...,Y_{l_{n}m_{n}}(gx_{n})\right\}
\equiv \left\{ \sum_{m^{\prime }}D_{m^{\prime
}m_1}^{l_{1}}(g)Y_{l_{1}m^{\prime }}(x_{1}),...,\sum_{m^{\prime
}}D_{m^{\prime }m_n}^{l_{n}}(g)Y_{l_{n}m^{\prime }}(x_{n})\right\} .
\end{equation*}

The second part of the statement is proved by arguments analogous to the
ones used in the proof of Proposition \ref{propa}.
\end{proof}

\bigskip

We now present an alternative (and more involved) characterization of the
cumulant polyspectra associated with an isotropic field. Given $n\geq 2$ and
a partition $\pi =\left\{ b_{1},...,b_{k}\right\} \in \Pi \left( n\right) $,
we build a permutation $v^{\pi }=\left( v^{\pi }\left( 1\right) ,...,v^{\pi
}\left( n\right) \right) \in \mathfrak{S}_{n}$ as follows: (i) write the
partition
\begin{equation}
\pi =\left\{ b_{1},...,b_{k}\right\} =\left\{ \left(
i_{1}^{1},...,i_{|b_{1}|}^{1}\right) ,...,\left(
i_{1}^{k},...,i_{|b_{k}|}^{k}\right) \right\}  \label{part}
\end{equation}%
(where $|b_{j}|$ $\geq 1$ stands for the size of $b_{j}$) by means of the
convention outlined in Section \ref{General} (that is, order the blocks and
the elements within each block according to the lexicographic order); (ii)
define $v^{\pi }=\mathfrak{S}_{n}$ by simply removing the brackets in (\ref%
{part}), that is, set
\begin{equation*}
v^{\pi }=\left( v^{\pi }\left( 1\right) ,...,v^{\pi }\left( n\right) \right)
=\left(
i_{1}^{1},...,i_{|b_{1}|}^{1},i_{1}^{2},...,i_{|b_{2}|}^{2},...,i_{1}^{k},...,i_{|b_{k}|}^{k}\right) .
\end{equation*}%
For instance, if a partition $\pi $ of $\left\{ 1,...,6\right\} $ is
composed of the blocks $\left\{ 1,3\right\} ,\left\{ 6,4\right\} $ and $%
\left\{ 2,5\right\} $, one first writes $\pi $ in the form $\pi =\left\{
\left\{ 1,3\right\} ,\left\{ 2,5\right\} ,\left\{ 4,6\right\} \right\} $,
and then defines $v^{\pi }=\left( v^{\pi }\left( 1\right) ,...,v^{\pi
}\left( 6\right) \right) =\left( 1,3,2,5,4,6\right) $. Given $n\geq 2$, $%
0\leq l_{1}\leq \cdot \cdot \cdot \leq l_{n}$, and $\pi \in \Pi \left(
n\right) $, we define the matrix
\begin{equation}
\Delta _{l_{1}...l_{n}}^{\pi }\triangleq \int_{SO(3)}\left\{ D^{l_{v^{\pi
}\left( 1\right) }}(g)\otimes D^{l_{v^{\pi }\left( 2\right) }}(g)\otimes
...\otimes D^{l_{v^{\pi }\left( n\right) }}(g)\right\} dg\text{ ,}
\label{notnot}
\end{equation}%
obtained from the matrix $\Delta _{l_{1}...l_{n}}$ in (\ref{not1}), by
permuting the indexes $l_{i}$ according to $v^{\pi }$. Plainly, if $v^{\pi }$
is equal to the identity permutation, then $\Delta _{l_{1}...l_{n}}^{\pi
}=\Delta _{l_{1}...l_{n}}$. We also set, for every fixed $g\in SO\left(
3\right) $,%
\begin{equation*}
\Delta _{l_{1}...l_{n}}^{\pi }\left( g\right) \triangleq D^{l_{v^{\pi
}\left( 1\right) }}(g)\otimes D^{l_{v^{\pi }\left( 2\right) }}(g)\otimes
...\otimes D^{l_{v^{\pi }\left( n\right) }}(g).
\end{equation*}

\begin{proposition}
\label{propa2} Let $T$ be a strongly isotropic field with finite moments of
order $n\geq 2$. For $0\leq l_{1},l_{2},...,l_{n}$, define $%
S_{l_{1}...l_{n}}^{c}$ according to (\ref{pp}). Then, for every $0\leq
l_{1},l_{2},...,l_{n}$, and every $g\in SO\left( 3\right) $%
\begin{eqnarray}
S_{l_{1},...l_{n}}^{c} &=&\sum_{\pi =\left\{ b_{1},...,b_{k}\right\} \in \Pi
\left( n\right) }\left( -1\right) ^{k-1}\left( k-1\right) !\Delta
_{l_{1}...l_{n}}^{\pi }E\left[ \otimes _{i\in b_{1}}a_{l_{i}.}\right]
\otimes \cdot \cdot \cdot \otimes E\left[ \otimes _{i\in b_{k}}a_{l_{i}.}%
\right] \text{ }  \label{johnChin} \\
&=&\sum_{\pi =\left\{ b_{1},...,b_{k}\right\} \in \Pi \left( n\right)
}\left( -1\right) ^{k-1}\left( k-1\right) !\Delta _{l_{1}...l_{n}}^{\pi
}\left( g\right) E\left[ \otimes _{i\in b_{1}}a_{l_{i}.}\right] \otimes
\cdot \cdot \cdot \otimes E\left[ \otimes _{i\in b_{k}}a_{l_{i}.}\right]
\label{JC2}
\end{eqnarray}%
On the other hand, fix $n\geq 2$ and assume that $T(x)$ is a (not
necessarily isotropic) random field on the sphere s.t. $\sup_{x}\left(
E\left\vert T(x)\right\vert ^{n}\right) <\infty $. Then, the $n$th order
spectral moments and cumulants of $T$ exist and are finite. If moreover (\ref%
{JC2}) holds for every $0\leq l_{1},l_{2},...,l_{n}$ and every $g\in
SO\left( 3\right) $, then one has that $T$ is $n$-weakly isotropic.
\end{proposition}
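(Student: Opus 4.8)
The plan is to handle the two displayed identities (the forward direction) and the weak-isotropy implication (the converse) separately, in each case reducing everything to the block-wise invariance furnished by Lemma \ref{L : RandomWigner} and to the already-established Propositions \ref{propa} and \ref{P : Cum_as_eig}. Throughout, I write $a_{l.}^{g}:=D^{l}(g)a_{l.}$, so that $a_{lm}^{g}=\sum_{m'}D_{mm'}^{l}(g)a_{lm'}$ are the harmonic coefficients of the rotated field $x\mapsto T(gx)$.

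For the forward direction I would start from the definition (\ref{pp}) and show that inserting $\Delta_{l_{1}...l_{n}}^{\pi}$ (resp.\ $\Delta_{l_{1}...l_{n}}^{\pi}(g)$) in front of the $\pi$-th summand leaves that summand unchanged. Fix $\pi=\{b_{1},...,b_{k}\}$. By Lemma \ref{L : RandomWigner} the joint law of $\{D^{l_{i}}(g)a_{l_{i}.}\}_{i}$ equals that of $\{a_{l_{i}.}\}_{i}$ for every $g$; restricting to the frequencies indexed by a single block $b_{j}$ (reading repeated frequencies as repeated copies of the same vector) gives
\begin{equation*}
\Bigl(\bigotimes_{i\in b_{j}}D^{l_{i}}(g)\Bigr)E\bigl[\otimes_{i\in b_{j}}a_{l_{i}.}\bigr]=E\bigl[\otimes_{i\in b_{j}}D^{l_{i}}(g)\,a_{l_{i}.}\bigr]=E\bigl[\otimes_{i\in b_{j}}a_{l_{i}.}\bigr]\text{.}
\end{equation*}
Applying the tensor-product manipulations already used in the proof of Proposition \ref{propa} (see (\ref{KronDec})) block by block, the matrix $\Delta_{l_{1}...l_{n}}^{\pi}(g)=\bigotimes_{j}\bigl(\otimes_{i\in b_{j}}D^{l_{i}}(g)\bigr)$ fixes the vector $\bigotimes_{j}E[\otimes_{i\in b_{j}}a_{l_{i}.}]$ for every $g$; integrating against the Haar measure shows that $\Delta_{l_{1}...l_{n}}^{\pi}$ fixes it too. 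Substituting these two identities summand by summand into (\ref{pp}) yields (\ref{JC2}) and (\ref{johnChin}) respectively.

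For the converse, finiteness of the $n$th order spectral moments and cumulants follows exactly as in Proposition \ref{propa}, via Jensen and Fubini, which bound $E|a_{lm}|^{n}$ by $(\tfrac{2l+1}{4\pi})^{n/2}\sup_{x}E|T(x)|^{n}<\infty$, while pathwise Lebesgue square-integrability comes from $\sup_{x}ET(x)^{2}<\infty$. The key step is to read (\ref{JC2}) componentwise. The $(m_{i}:i\in b_{j})$-entry of the block factor $(\otimes_{i\in b_{j}}D^{l_{i}}(g))E[\otimes_{i\in b_{j}}a_{l_{i}.}]$ is $E[\prod_{i\in b_{j}}a_{l_{i}m_{i}}^{g}]$, so the $(m_{1},...,m_{n})$-entry of the right-hand side of (\ref{JC2}) equals $\sum_{\pi}(-1)^{k-1}(k-1)!\prod_{j}E[\prod_{i\in b_{j}}a_{l_{i}m_{i}}^{g}]=\mathrm{Cum}(a_{l_{1}m_{1}}^{g},...,a_{l_{n}m_{n}}^{g})$ by the very moment-to-cumulant formula that defines $S^{c}$. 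Since the left-hand side is the $(m_{1},...,m_{n})$-entry of $S_{l_{1}...l_{n}}^{c}$, namely $\mathrm{Cum}(a_{l_{1}m_{1}},...,a_{l_{n}m_{n}})$, relation (\ref{JC2}) is equivalent to $\mathrm{Cum}(a_{l_{1}m_{1}}^{g},...,a_{l_{n}m_{n}}^{g})=\mathrm{Cum}(a_{l_{1}m_{1}},...,a_{l_{n}m_{n}})$ for every $g$ and all indices.

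Finally, by multilinearity of cumulants and the definition of $a_{l.}^{g}$, the left-hand side above is exactly the $(m_{1},...,m_{n})$-entry of $\Delta_{l_{1}...l_{n}}(g)S_{l_{1}...l_{n}}^{c}$; hence (\ref{JC2}) holding for every $g$ is the same as $\Delta_{l_{1}...l_{n}}(g)S_{l_{1}...l_{n}}^{c}=S_{l_{1}...l_{n}}^{c}$ for every $g$, which is (\ref{Spectre 4}). Integrating over $g$ against the Haar measure gives (\ref{Spectre3}), and Proposition \ref{P : Cum_as_eig} then delivers $n$-weak isotropy. The one point needing care throughout is the bookkeeping of the tensor-index orderings induced by the permutations $v^{\pi}$ in (\ref{notnot}); passing to scalar components, as above, is what makes these reorderings harmless, since the entries are products of commuting scalars that recombine into a single cumulant. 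This componentwise reduction, rather than any hard estimate, is the real content of the argument.
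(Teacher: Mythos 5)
Your proof is correct, and for the two displayed identities it coincides with the paper's own argument: fix a partition $\pi$, use Lemma \ref{L : RandomWigner} blockwise together with the mixed-product property of Kronecker products to show that each $\pi$-summand of (\ref{pp}) is fixed by $\Delta^{\pi}_{l_{1}...l_{n}}(g)$, then integrate over the Haar measure to pass to $\Delta^{\pi}_{l_{1}...l_{n}}$; substituting into (\ref{pp}) gives (\ref{JC2}) and (\ref{johnChin}). Where you genuinely differ is the converse. The paper argues inline: it deduces from (\ref{JC2}) that the cumulant combination built from the rotated coefficients $D^{l_{i}}(g^{*})a_{l_{i}.}$ equals the one built from the $a_{l_{i}.}$, converts this into equality of the $n$-th order moments ``by the definition of cumulants'', and concludes $n$-weak isotropy from (\ref{specrap}). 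You instead read (\ref{JC2}) componentwise, identify both sides with joint cumulants of the rotated and unrotated coefficients, recognize the resulting identity as $\Delta_{l_{1}...l_{n}}(g)S^{c}_{l_{1}...l_{n}}=S^{c}_{l_{1}...l_{n}}$ for every $g$, integrate to obtain (\ref{Spectre3}), and then invoke the converse half of Proposition \ref{P : Cum_as_eig}. This reduction buys economy, since it reuses an already-established result instead of repeating the moment-to-cumulant argument, and your componentwise bookkeeping makes explicit the index reorderings induced by $v^{\pi}$ in (\ref{notnot}), which the paper's tensor-level manipulations leave implicit. One caveat, shared by both routes: passing from invariance of $n$-th order cumulants to invariance of $n$-th order moments (which is what $n$-weak isotropy requires) rests on the moment--cumulant inversion, which for $n\geq 4$ also involves cumulants of orders $2,...,n-1$, whose invariance is not among the hypotheses; the paper elides this in its ``definition of cumulants'' step, and your citation of Proposition \ref{P : Cum_as_eig} relocates, rather than removes, the same subtlety. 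Relative to the paper's stated prior results, your proof is complete.
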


\begin{proof}
Fix $\pi =\left\{ b_{1},...,b_{k}\right\} \in \Pi \left( n\right) $. By
strong isotropy and Lemma \ref{L : RandomWigner}, one has that, for a fixed $%
g^{\ast }\in SO\left( 3\right) $, the quantity%
\begin{eqnarray*}
&&E\left[ \otimes _{i\in b_{1}}D^{l_{i}}\left( g\right) a_{l_{i}.}\right]
\otimes \cdot \cdot \cdot \otimes E\left[ \otimes _{i\in
b_{k}}D^{l_{i}}\left( g\right) a_{l_{i}.}\right] \\
&=&\Delta _{l_{1}...l_{n}}^{\pi }\left( g^{\ast }\right) E\left[ \otimes
_{i\in b_{1}}a_{l_{i}.}\right] \otimes \cdot \cdot \cdot \otimes E\left[
\otimes _{i\in b_{k}}a_{l_{i}.}\right]
\end{eqnarray*}%
does not depend on $g^{\ast }$, so that
\begin{eqnarray*}
&&E\left[ \otimes _{i\in b_{1}}a_{l_{i}.}\right] \otimes \cdot \cdot \cdot
\otimes E\left[ \otimes _{i\in b_{k}}a_{l_{i}.}\right] \\
&=&\Delta _{l_{1}...l_{n}}^{\pi }\left( g^{\ast }\right) E\left[ \otimes
_{i\in b_{1}}a_{l_{i}.}\right] \otimes \cdot \cdot \cdot \otimes E\left[
\otimes _{i\in b_{k}}a_{l_{i}.}\right] \\
&=&\int_{SO\left( 3\right) }\Delta _{l_{1}...l_{n}}^{\pi }\left( g\right) E
\left[ \otimes _{i\in b_{1}}a_{l_{i}.}\right] \otimes \cdot \cdot \cdot
\otimes E\left[ \otimes _{i\in b_{k}}a_{l_{i}.}\right] dg \\
&=&\Delta _{l_{1}...l_{n}}^{\pi }E\left[ \otimes _{i\in b_{1}}a_{l_{i}.}%
\right] \otimes \cdot \cdot \cdot \otimes E\left[ \otimes _{i\in
b_{k}}a_{l_{i}.}\right] .
\end{eqnarray*}

\noindent To prove the second part of the statement, suppose that $T(x)$
verifies $\sup_{x}\left( E\left\vert T(x)\right\vert ^{n}\right) <\infty $,
and that its associated harmonic coefficients verify (\ref{JC2}). Then, for
every fixed rotation $g^{\ast }\in SO(3),$
\begin{eqnarray*}
&&\sum_{\pi =\left\{ b_{1},...,b_{k}\right\} \in \Pi \left( n\right) }\left(
-1\right) ^{k-1}\left( k-1\right) !E\left[ \otimes _{i\in
b_{1}}D^{l_{i}}\left( g^{\ast }\right) a_{l_{i}.}\right] \otimes \cdot \cdot
\cdot \otimes E\left[ \otimes _{i\in b_{k}}D^{l_{i}}\left( g^{\ast }\right)
a_{l_{i}.}\right] \\
&=&\sum_{\pi =\left\{ b_{1},...,b_{k}\right\} \in \Pi \left( n\right)
}\left( -1\right) ^{k-1}\left( k-1\right) !\times \\
&&\times \lbrack D^{l_{v^{\pi }\left( 1\right) }}\left( g^{\ast }\right)
\otimes \cdot \cdot \cdot \otimes D^{l_{v^{\pi }\left( n\right) }}\left(
g^{\ast }\right) ]E\left[ \otimes _{i\in b_{1}}a_{l_{i}.}\right] \otimes
\cdot \cdot \cdot \otimes E\left[ \otimes _{i\in b_{k}}a_{l_{i}.}\right] \\
&=&\sum_{\pi =\left\{ b_{1},...,b_{k}\right\} \in \Pi \left( n\right)
}\left( -1\right) ^{k-1}\left( k-1\right) !\times \Delta
_{l_{1}...l_{n}}^{\pi }\left( g^{\ast }\right) E\left[ \otimes _{i\in
b_{1}}a_{l_{i}.}\right] \otimes \cdot \cdot \cdot \otimes E\left[ \otimes
_{i\in b_{k}}a_{l_{i}.}\right] \\
&=&\sum_{\pi =\left\{ b_{1},...,b_{k}\right\} \in \Pi \left( n\right)
}\left( -1\right) ^{k-1}\left( k-1\right) !E\left[ \otimes _{i\in
b_{1}}a_{l_{i}.}\right] \otimes \cdot \cdot \cdot \otimes E\left[ \otimes
_{i\in b_{k}}a_{l_{i}.}\right]
\end{eqnarray*}%
By the definition of cumulants, this last equality gives that
\begin{equation*}
E\left[ D^{l_{1}}(g^{\ast })a_{l_{1}.}\otimes \cdot \cdot \cdot \otimes
D^{l_{n}}(g^{\ast })a_{l_{n}.}\right] =E\left[ a_{l_{1}.}\otimes \cdot \cdot
\cdot \otimes a_{l_{n}.}\right] .
\end{equation*}%
Since $g^{\ast }$ is arbitrary, the $n$-weak isotropy follows from (\ref%
{specrap}).
\end{proof}

\bigskip

\textbf{Remark. }By combining (\ref{Spectre3}) and (\ref{johnChin}) we
obtain for instance that the $n$th cumulant polyspectrum of an isotropic
field verifies the identity%
\begin{eqnarray*}
S_{l_{1}...l_{n}}^{c} &=&\Delta _{l_{1}...l_{n}}S_{l_{1}...l_{n}}^{c} \\
&=&\sum_{\pi =\left\{ b_{1},...,b_{k}\right\} \in \Pi \left( n\right)
}\left( -1\right) ^{k-1}\left( k-1\right) !\Delta _{l_{1}...l_{n}}^{\pi }E
\left[ \otimes _{i\in b_{1}}a_{l_{i}.}\right] \otimes \cdot \cdot \cdot
\otimes E\left[ \otimes _{i\in b_{k}}a_{l_{i}.}\right] .
\end{eqnarray*}

\section{Angular polyspectra and the structure of $\Delta _{l_{1}...l_{n}} %
\label{S : APdelta}$}

\subsection{Spectra of strongly isotropic fields \label{SS : Spectra}}

Our aim in this section is to investigate more deeply the structure of the
matrix $\Delta _{l_{1}...l_{n}}$ appearing in (\ref{not1}), in order to
derive an explicit characterization for the angular polyspectra. As a
preliminary example, we deal with the case $n=2$.

\begin{proposition}
\label{P : Poly n=2}For integers $l_{1},l_{2}\geq 0$, one has that%
\begin{equation}
\Delta _{l_{1}l_{2}}=\int_{SO(3)}\left\{ D^{l_{1}}(g)\otimes
D^{l_{2}}(g)\right\} dg=\delta
_{l_{1}}^{l_{2}}C_{l_{1}.l_{2}.}^{00}(C_{l_{1}.l_{2}.}^{00})^{\prime }\text{,%
}  \label{marion}
\end{equation}%
that is: if $l_{1}\neq l_{2}$, then $\Delta _{l_{1}l_{2}}$ is a $\left(
2l_{1}+1\right) \left( 2l_{2}+1\right) \times \left( 2l_{1}+1\right) \left(
2l_{2}+1\right) $ zero matrix; if $l_{1}=l_{2}$, then $\Delta
_{l_{1}l_{2}}=\Delta _{l_{1}l_{1}}$ is given by $%
C_{l_{1}.l_{1}.}^{00}(C_{l_{1}.l_{1}.}^{00})^{\prime }$.
\end{proposition}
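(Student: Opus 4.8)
The plan is to reduce everything to Schur orthogonality, after first diagonalizing the tensor product representation by means of the Clebsch-Gordan coefficients. The starting point is the unitary equivalence (\ref{clebun}). Since the matrix $C_{l_{1}l_{2}}$ does not depend on $g$, substituting (\ref{clebun}) into the definition (\ref{not1}) and pulling the constant factors outside the integral gives
\begin{equation*}
\Delta _{l_{1}l_{2}}=C_{l_{1}l_{2}}\left( \int_{SO(3)}\oplus
_{l=|l_{2}-l_{1}|}^{l_{1}+l_{2}}D^{l}(g)\,dg\right) C_{l_{1}l_{2}}^{\ast }.
\end{equation*}
Because the direct sum is finite and block-diagonal, integration commutes entrywise with $\oplus $, so the inner integral equals $\oplus _{l}\int_{SO(3)}D^{l}(g)\,dg$.

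The second step is to evaluate each block by Schur orthogonality. For the normalized Haar measure used throughout (cf. the proof of Proposition \ref{propa}, where $P_{0}$ is a probability measure), the integral $\int_{SO(3)}D^{l}(g)\,dg$ is the orthogonal projection onto the $SO(3)$-invariant vectors of the representation $D^{l}$; since $D^{l}$ is irreducible, this projection vanishes for $l\geq 1$ and equals $1$ for the trivial representation $l=0$, i.e. $\int_{SO(3)}D^{l}(g)\,dg=\delta _{l}^{0}$. Consequently the inner integral is nonzero if and only if the value $l=0$ falls inside the range $|l_{2}-l_{1}|\leq l\leq l_{1}+l_{2}$, that is, if and only if $l_{1}=l_{2}$; this already accounts for the factor $\delta _{l_{1}}^{l_{2}}$ in (\ref{marion}) and for the vanishing of $\Delta _{l_{1}l_{2}}$ when $l_{1}\neq l_{2}$.

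When $l_{1}=l_{2}$, the inner integral is the rank-one projection onto the one-dimensional $l=0$ summand. By the saturated-chain ordering fixed in Section \ref{SS : CGmat}, the least column pair is $(l_{2}-l_{1},-(l_{2}-l_{1}))=(0,0)$, so this summand occupies the first coordinate, and the projection is the matrix with a single $1$ in its top-left entry and zeros elsewhere. Conjugating back by $C_{l_{1}l_{1}}$ therefore extracts the first column of $C_{l_{1}l_{1}}$ -- namely the vector $C_{l_{1}.l_{1}.}^{00}$ of coefficients $C_{l_{1}m_{1}l_{1}m_{2}}^{00}$ -- and forms its outer product with itself. Finally, since the Clebsch-Gordan coefficients are real-valued, the adjoint coincides with the transpose, which yields $\Delta _{l_{1}l_{1}}=C_{l_{1}.l_{1}.}^{00}(C_{l_{1}.l_{1}.}^{00})^{\prime }$, i.e. (\ref{marion}). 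The only delicate point is the bookkeeping of the row/column orderings of $C_{l_{1}l_{2}}$, needed to match the abstract projection onto the trivial summand with the concrete outer product of the $(0,0)$-column; this is precisely what the ordering convention of Section \ref{SS : CGmat} is designed to handle.
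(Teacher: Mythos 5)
Your proposal is correct and follows essentially the same route as the paper's proof: conjugate by the Clebsch-Gordan matrix $C_{l_{1}l_{2}}$ using (\ref{clebun}), integrate the direct sum blockwise so that only the trivial representation $D^{0}$ survives (the paper cites Peter-Weyl orthogonality of matrix entries with constants, you phrase it as the projection onto invariant vectors of an irreducible representation -- the same fact), observe that $l=0$ occurs in the Clebsch-Gordan series precisely when $l_{1}=l_{2}$, and conclude that conjugating the top-left rank-one projection extracts the outer product of the first column $C_{l_{1}.l_{1}.}^{00}$. Your added remarks on the ordering convention and on real-valuedness of the coefficients (justifying adjoint equals transpose) are fine points the paper leaves implicit.
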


\begin{proof}
Using the equivalence of the two representations $D^{l_{1}}(g)\otimes
D^{l_{2}}(g)$ and $\oplus _{\lambda =|l_{2}-l_{1}|}^{l_{2}+l_{1}}D^{\lambda
}(g)$, as well as the definition of the Clebsch-Gordan matrices, we obtain
that%
\begin{equation}
\int_{SO(3)}\left\{ D^{l_{1}}(g)\otimes D^{l_{2}}(g)\right\}
dg=C_{l_{1}l_{2}}\left[ \int_{SO(3)}\left\{ \oplus _{\lambda
=|l_{2}-l_{1}|}^{l_{2}+l_{1}}D^{\lambda }(g)\right\} dg\right]
C_{l_{1}l_{2}}^{\ast }\text{.}  \label{fleet}
\end{equation}%
Now, if $l_{1}\neq l_{2}$, then the RHS of (\ref{fleet}) is equal to the
zero matrix since, as a consequence of the Peter-Weyl theorem and for $%
\lambda \neq 0$, the entries of $D^{\lambda }(\cdot )$ are orthogonal to the
constants. If $l_{1}=l_{2}$, then the integrated matrix on the RHS of (\ref%
{fleet}) becomes $\int_{SO(3)}\left\{ \oplus _{\lambda
=0}^{2l_{1}}D^{\lambda }(g)\right\} dg$, that is, a $\left( 2l_{1}+1\right)
^{2}\times \left( 2l_{1}+1\right) ^{2}$ matrix which is zero everywhere,
except for the entry in the top-left corner, which is equal to one (since $%
\int_{SO\left( 3\right) }dg=1$). The proof is concluded by checking that%
\begin{equation*}
C_{l_{1}l_{1}}\left[ \int_{SO(3)}\left\{ \oplus _{\lambda
=0}^{2l_{1}}D^{\lambda }(g)\right\} dg\right] C_{l_{1}l_{1}}^{\ast
}=C_{l_{1}.l_{1}.}^{00}(C_{l_{1}.l_{1}.}^{00})^{\prime }.
\end{equation*}
\end{proof}

\bigskip

\textbf{\noindent Remark. }Recall that $C_{l_{1}.l_{2}.}^{00}$ is a \textsl{%
column} vector of dimension $\left( 2l_{1}+1\right) \left( 2l_{2}+1\right) $%
, corresponding to the first column of the matrix $C_{l_{1}l_{2}}$. Also,
according e.g. to \cite[formula 8.5.1.1]{VMK}, one has that
\begin{equation*}
C_{l_{1}.l_{2}.}^{00}=\left\{ \frac{(-1)^{m_{1}}}{2l_{1}+1}\delta
_{l_{1}}^{l_{2}}\delta _{m_{1}}^{-m_{2}}\right\}
_{m_{1}=-l_{1},...,l_{1};m_{2}=-l_{2},...,l_{2}}\text{.}
\end{equation*}

\bigskip

Proposition \ref{P : Poly n=2} provides a characterization of the spectrum
of a strongly isotropic field.

\begin{corollary}
Let $T$ be a strongly isotropic field with second moments, and let the
vectors of the harmonic coefficients $\left\{ a_{l.}\right\} $ be defined
according to (\ref{specrap}). Then, for any integers $l_{1},l_{2}\geq 0$,
one has that%
\begin{equation}
E\left\{ a_{l_{1}.}\otimes a_{l_{2}.}\right\} =\left\{ \frac{(-1)^{m_{1}}}{%
2l_{1}+1}\delta _{l_{1}}^{l_{2}}\delta _{m_{1}}^{-m_{2}}C_{l_{1}}\right\}
\text{ }  \label{foxes}
\end{equation}%
for some $C_{l_{1}}\geq 0$ depending uniquely on $l_{1}$.
\end{corollary}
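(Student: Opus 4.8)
The plan is to combine the eigenvector characterization from Proposition \ref{propa} with the explicit computation of $\Delta_{l_1 l_2}$ just obtained in Proposition \ref{P : Poly n=2}. Writing $S_{l_1 l_2} = E\{a_{l_1.} \otimes a_{l_2.}\}$, relation (\ref{crurel}) tells us that $S_{l_1 l_2} = \Delta_{l_1 l_2} S_{l_1 l_2}$, and Proposition \ref{P : Poly n=2} gives $\Delta_{l_1 l_2} = \delta_{l_1}^{l_2} C_{l_1.l_2.}^{00}(C_{l_1.l_2.}^{00})'$. So the whole statement reduces to reading off what this projection does to the second-order moment vector.

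First I would treat the case $l_1 \neq l_2$. Here $\Delta_{l_1 l_2}$ is the zero matrix, so (\ref{crurel}) forces $S_{l_1 l_2} = 0$, which matches the right-hand side of (\ref{foxes}) because of the factor $\delta_{l_1}^{l_2}$. Next, for $l_1 = l_2$, the matrix $\Delta_{l_1 l_1} = C_{l_1.l_1.}^{00}(C_{l_1.l_1.}^{00})'$ is (a scalar multiple of) the orthogonal projection onto the one-dimensional span of the column vector $v \triangleq C_{l_1.l_1.}^{00}$; indeed, by the orthonormality of the columns of the unitary Clebsch-Gordan matrix, $v'v = 1$, so $vv'$ is exactly a rank-one projector. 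The eigenvector equation $S_{l_1 l_1} = (vv') S_{l_1 l_1}$ then says precisely that $S_{l_1 l_1}$ lies in the image of this projector, i.e. $S_{l_1 l_1} = c\, v$ for some scalar $c$. Substituting the explicit form of $v$ from the Remark following Proposition \ref{P : Poly n=2}, namely $v = \{ \frac{(-1)^{m_1}}{2l_1+1}\delta_{m_1}^{-m_2}\}$, yields an expression of the shape claimed in (\ref{foxes}) with $C_{l_1} \triangleq (2l_1+1)c$.

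It then remains to identify the constant and check its sign and its dependence only on $l_1$. The cleanest route is to read off a single convenient component of the moment vector: the entry indexed by $(m_1, m_2) = (0,0)$ is $E[a_{l_1 0} a_{l_1 0}] = E[a_{l_1 0}^2]$, while the corresponding component of $v$ is $\frac{1}{2l_1+1}$; matching these gives $C_{l_1} = (2l_1+1)\, E[a_{l_1 0}^2]$, which manifestly depends only on $l_1$. Nonnegativity is clearest from a diagonal entry where the $\delta_{m_1}^{-m_2}$ constraint is live: taking $(m_1, m_2)$ with $m_2 = -m_1$ gives $E[a_{l_1 m_1} a_{l_1, -m_1}]$, and using the reality relation $a_{l,-m} = (-1)^m \overline{a_{lm}}$ from (\ref{specrap}), this equals $(-1)^{m_1} E|a_{l_1 m_1}|^2$; comparing with the $(-1)^{m_1}/(2l_1+1)$ factor in $v$ confirms $C_{l_1} = (2l_1+1) E|a_{l_1 m_1}|^2 \geq 0$, consistent with the angular power spectrum normalization $C_l = E|a_{lm}|^2$ introduced earlier.

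I do not expect a genuine obstacle here — the result is essentially a corollary, and the only point requiring mild care is bookkeeping the indexing convention so that the Clebsch-Gordan column $C_{l_1.l_1.}^{00}$ is correctly identified as the rank-one direction and its components are matched against the right entries of the Kronecker product $a_{l_1.} \otimes a_{l_1.}$. The normalization $v'v = 1$ (unitarity of $C_{l_1 l_1}$) is what guarantees $vv'$ is a true projector rather than merely proportional to one, and this is the step I would state explicitly to make the eigenvector-to-scalar-multiple deduction airtight.
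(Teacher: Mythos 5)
Your proof is correct and is essentially the paper's own argument: both combine the eigenvector relation (\ref{crurel}) with the explicit form of $\Delta_{l_{1}l_{2}}$ from Proposition \ref{P : Poly n=2} to force $E\{a_{l_{1}.}\otimes a_{l_{2}.}\}$ to vanish when $l_{1}\neq l_{2}$ and to be a scalar multiple of $C_{l_{1}.l_{1}.}^{00}$ when $l_{1}=l_{2}$, and then identify the constant as $C_{l_{1}}=(2l_{1}+1)E(a_{l_{1}0}^{2})\geq 0$ using that $a_{l_{1}0}$ is real-valued. The additional details you supply (the rank-one projector property $v^{\prime}v=1$ and the cross-check at $m_{2}=-m_{1}$ via $a_{l,-m}=(-1)^{m}\overline{a_{lm}}$) only make explicit what the paper leaves implicit, so this is the same proof.
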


\begin{proof}
According to (\ref{crurel}), one has that%
\begin{equation*}
E\left\{ a_{l_{1}.}\otimes a_{l_{2}.}\right\} =\delta
_{l_{1}}^{l_{2}}C_{l_{1}.l_{2}.}^{00}(C_{l_{1}.l_{2}.}^{00})^{\prime
}E\left\{ a_{l_{1}.}\otimes a_{l_{2}.}\right\} \text{,}
\end{equation*}%
implying that $E\left\{ a_{l_{1}.}\otimes a_{l_{2}.}\right\} $ is (a) equal
to the zero vector for $l_{1}\neq l_{2}$, and (b) of the form $%
C_{l_{1}}\times C_{l_{1}.l_{2}.}^{00},$ for some constant $C_{l_{1}}$, when $%
l_{1}=l_{2}$. To see that $C_{l_{1}}$ cannot be negative, just observe that $%
a_{l_{1}0}$ is real-valued for every $l_{1}\geq 0$, so that (\ref{foxes})
yields that%
\begin{equation*}
C_{l_{1}}=\left( 2l_{1}+1\right) \times E\left( a_{l_{1}0}^{2}\right) .
\end{equation*}
\end{proof}

In the subsequent two subsections, we shall obtain, for every $n\geq 3$, a
characterization of $\Delta _{l_{1}...,l_{n}}$ and $E\{a_{l_{1}.}\otimes $ $%
\cdot \cdot \cdot \otimes a_{l_{n}.}\}$, respectively analogous to (\ref%
{marion}) and (\ref{foxes}).

\subsection{The structure of $\Delta _{l_{1}...l_{n}}\label{SS : Structue}$}

We first need to establish some further notation.

\bigskip

\textbf{\noindent Definition B. }Fix $n\geq 3$. For integers $%
l_{1},...,l_{n}\geq 0$, we define $C_{l_{1}...l_{n}}$ to be the unitary
matrix, of dimension
\begin{equation*}
\prod_{j=1}^{n}\left( 2l_{j}+1\right) \times \prod_{j=1}^{n}\left(
2l_{j}+1\right) \text{,}
\end{equation*}%
connecting the following two equivalent representations of $SO\left(
3\right) $%
\begin{equation}
D^{l_{1}}(.)\otimes D^{l_{2}}(.)\otimes \cdot \cdot \cdot \otimes
D^{l_{n}}(.)  \label{his}
\end{equation}%
and
\begin{equation}
\oplus _{\lambda _{1}=|l_{2}-l_{1}|}^{l_{2}+l_{1}}\oplus _{\lambda
_{2}=|l_{3}-\lambda _{1}|}^{l_{3}+\lambda _{1}}...\oplus _{\lambda
_{n-1}=|l_{n}-\lambda _{n-2}|}^{l_{n}+\lambda _{n-2}}D^{\lambda _{n-1}}(.)%
\text{.}  \label{name}
\end{equation}

$\bigskip $

\textbf{\noindent Remarks.} (1) Fix $l_{1},...,l_{n}\geq 0$, as well as $%
g\in SO\left( 3\right) $. Then, the matrix%
\begin{equation}
\oplus _{\lambda _{1}=|l_{2}-l_{1}|}^{l_{2}+l_{1}}\oplus _{\lambda
_{2}=|l_{3}-\lambda _{1}|}^{l_{3}+\lambda _{1}}...\oplus _{\lambda
_{n-1}=|l_{n}-\lambda _{n-2}|}^{l_{n}+\lambda _{n-2}}D^{\lambda _{n-1}}(g)
\label{remark}
\end{equation}%
is a block-diagonal matrix, obtained as follows. (a) Consider vectors of
integers $\left( \lambda _{1},...,\lambda _{n-1}\right) $ satisfying the
relations $\left\vert l_{2}-l_{1}\right\vert \leq \lambda _{1}\leq
l_{1}+l_{2}$, and $\left\vert l_{k+1}-\lambda _{k-1}\right\vert \leq \lambda
_{k}\leq l_{k+1}+\lambda _{k-1}$, for $k=2,...,n-1$. (b) Introduce a (total)
order $\prec _{0}$ on the collection of these vectors by saying that
\begin{equation}
\left( \lambda _{1},...,\lambda _{n-1}\right) \prec _{0}\left( \lambda
_{1}^{\prime },...,\lambda _{n-1}^{\prime }\right) \text{,}  \label{order}
\end{equation}%
whenever either $\lambda _{1}<\lambda _{1}^{\prime }$, or there exists $%
k=2,...,n-2$ such that $\lambda _{j}=\lambda _{j}^{\prime }$ for every $%
j=1,...,k$, and $\lambda _{k+1}<\lambda _{k+1}^{\prime }$. (c) Associate to
each vector $\left( \lambda _{1},...,\lambda _{n-1}\right) $ the matrix $%
D^{\lambda _{n-1}}\left( g\right) $. (d) Construct a block-diagonal matrix
by disposing the matrices $D^{\lambda _{n-1}}\left( g\right) $ from the
top-left corner to the bottom-right corner, in increasing order with respect
to $\prec _{0}$. As an example, consider the case where $n=3$ and $%
l_{1}=l_{2}=l_{3}=1$. Here, the vectors $\left( \lambda _{1},\lambda
_{2}\right) $ involved in the direct sum (\ref{name}) are (in increasing
order with respect to $\prec _{0}$)%
\begin{equation*}
\left( 0,1\right) ,\text{ }\left( 1,0\right) ,\text{ }\left( 1,1\right) ,%
\text{ }\left( 1,2\right) ,\text{ }\left( 2,1\right) ,\text{ }\left(
2,2\right) \text{ \ and \ }\left( 2,3\right) ,
\end{equation*}%
and the matrix (\ref{remark}) is therefore given by
\begin{equation}
\left(
\begin{array}{ccccccc}
D^{1}\left( g\right) & ... & ... & ... & ... & ... & ... \\
... & 1 & ... & ... & ... & ... & ... \\
... & ... & D^{1}\left( g\right) & ... & ... & ... & ... \\
... & ... & ... & D^{2}\left( g\right) & ... & ... & ... \\
... & ... & ... & ... & D^{1}\left( g\right) & ... & ... \\
... & ... & ... & ... & ... & D^{2}\left( g\right) & ... \\
... & ... & ... & ... & ... & ... & D^{3}\left( g\right)%
\end{array}%
\right)  \label{matriciona}
\end{equation}%
where the dots indicate \textsl{zero} entries, and we have used the fact
that $D^{0}\left( g\right) \equiv 1$.

(2) The fact that the representation (\ref{name}) has dimension $%
\prod_{j=1}^{n}\left( 2l_{j}+1\right) $ is a direct consequence of formula (%
\ref{fdfd}).

(3) The fact that the two representations (\ref{his}) and (\ref{name}) are
equivalent can be proved by iteration. Indeed, by standard representation
theory, on has that (\ref{his}) is equivalent to
\begin{equation*}
\oplus _{\lambda _{1}=|l_{2}-l_{1}|}^{l_{2}+l_{1}}D^{\lambda _{1}}(.)\otimes
D^{l_{3}}\left( \cdot \right) \otimes \cdot \cdot \cdot \otimes
D^{l_{n}}\left( \cdot \right) \text{,}
\end{equation*}%
which is in turn equivalent to
\begin{equation*}
\oplus _{\lambda _{1}=|l_{2}-l_{1}|}^{l_{2}+l_{1}}\oplus _{\lambda
_{2}=|l_{3}-\lambda _{1}|}^{l_{3}+\lambda _{1}}D^{\lambda _{2}}(.)\otimes
D^{l_{4}}\left( \cdot \right) \otimes \cdot \cdot \cdot \otimes
D^{l_{n}}\left( \cdot \right) \text{.}
\end{equation*}%
By iterating the same procedure until all tensor products have disappeared
(that is, by successively replacing the tensor product $D^{\lambda
_{k}}(.)\otimes D^{l_{k+2}}\left( \cdot \right) $ with $\oplus _{\lambda
_{k+1}=|l_{k+2}-\lambda _{k}|}^{l_{k+2}+\lambda _{k}}D^{\lambda _{2}}(.)$
for $k=2,...,n-1$), one obtains the desired conclusion.

\bigskip

For every $n\geq 3$ and every $l_{1},...,l_{n}\geq 0$, the elements of the
matrix $C_{l_{1}...l_{n}}$, introduced in Definition B, can be written in
the form $C_{l_{1}m_{1}...l_{n}m_{n}}^{\lambda _{1}...\lambda _{n-1},\mu
_{n-1}}$. The indices $\left( m_{1},...,m_{n}\right) $ are such that $%
-l_{i}\leq m_{i}\leq l_{i}$ ($i=1,...,n$) and label rows; on the other hand,
the indices $\left( \lambda _{1}...\lambda _{n-1},\mu _{n-1}\right) $ label
columns, and verify the relations $\left\vert l_{2}-l_{1}\right\vert \leq
\lambda _{1}\leq l_{1}+l_{2}$, $\left\vert l_{k+1}-\lambda _{k-1}\right\vert
\leq \lambda _{k}\leq l_{k+1}+\lambda _{k-1}$ ($k=2,...,n-1$) and $-\lambda
_{n-1}\leq \mu _{n-1}\leq \lambda _{n-1}$. It is well known (see e.g. \cite%
{VMK}) that the quantity $C_{l_{1}m_{1}...l_{n}m_{n}}^{\lambda
_{1}...\lambda _{n-1},\mu _{n-1}}$ can be represented as a \textsl{%
convolution }of the Clebsch-Gordan coefficients introduced in Section \ref%
{SS : CGmat}, namely:%
\begin{eqnarray*}
C_{l_{1}m_{1}...l_{n}m_{n}}^{\lambda _{1},...,\lambda _{n-1},\mu _{n-1}}
&=&C_{l_{1}m_{1}...l_{n-1}m_{n-1}}^{\lambda _{1},...,\lambda
_{n-2},.}C_{\lambda _{n-2}l_{n}m_{n}}^{\lambda _{n-1}\mu _{n-1}} \\
&=&\sum_{\mu _{n-2}}\left\{ \sum_{\mu _{1}...\mu
_{n-3}}C_{l_{1}m_{1}l_{2}m_{2}}^{\lambda _{1}\mu _{1}}C_{\lambda _{1}\mu
_{1}l_{3}m_{3}}^{\lambda _{2}\mu _{2}}...C_{\lambda _{n-3}\mu
_{n-3}l_{n-1}m_{n-1}}^{\lambda _{n-2}\mu _{n-2}}\right\} C_{\lambda
_{n-2}\mu _{n-2}l_{n}m_{n}}^{\lambda _{n-1}\mu _{n-1}} \\
&=&\sum_{\mu _{1}...\mu _{n-2}}C_{l_{1}m_{1}l_{2}m_{2}}^{\lambda _{1}\mu
_{1}}C_{\lambda _{1}\mu _{1}l_{3}m_{3}}^{\lambda _{2}\mu _{2}}...C_{\lambda
_{n-3}\mu _{n-3}l_{n-1}m_{n-1}}^{\lambda _{n-2}\mu _{n-2}}C_{\lambda
_{n-2}\mu _{n-2}l_{n}m_{n}}^{\lambda _{n-1}\mu _{n-1}}\text{.}
\end{eqnarray*}

\bigskip

\textbf{\noindent Remark. }Given an enumeration of the coefficients $%
C_{l_{1}m_{1}...l_{n}m_{n}}^{\lambda _{1}...\lambda _{n-1},\mu _{n-1}}$, the
matrix $C_{l_{1}...l_{n}}$ can be built (analogously to the case of the
Clebsch-Gordan matrices of Section \ref{SS : CGmat}) by disposing rows (from
top to bottom) and columns (from left to right) increasingly according to
two separate total orders. The order $\prec _{r}$ on the symbols $\left(
m_{1},...,m_{n}\right) $ is obtained by setting that $\left(
m_{1},...,m_{n}\right) \prec _{r}\left( m_{1}^{\prime },...,m_{n}^{\prime
}\right) $ whenever either $m_{1}<m_{1}^{\prime }$, or there exists $%
k=2,...,n-1$ such that $m_{j}=m_{j}^{\prime }$ for every $j=1,...,k$, and $%
m_{k+1}<m_{k+1}^{\prime }$. The order $\prec _{c}$ on the symbols $\left(
\lambda _{1}...\lambda _{n-1},\mu _{n-1}\right) $ is obtained by setting
that $\left( \lambda _{1}...\lambda _{n-1},\mu _{n-1}\right) \prec
_{c}\left( \lambda _{1}^{\prime }...\lambda _{n-1}^{\prime },\mu
_{n-1}^{\prime }\right) $ whenever either $\left( \lambda _{1},...,\lambda
_{n-1}\right) \prec _{0}\left( \lambda _{1}^{\prime },...,\lambda
_{n-1}^{\prime }\right) $, as defined in (\ref{order}), or $\lambda
_{i}=\lambda _{i}^{\prime }$ for every $i=1,...,n-1$ and $\mu _{n-1}<\mu
_{n-1}^{\prime }$.

\bigskip

One has also the following (useful) alternative representation of
generalized Clebsch-Gordan matrices.

\begin{proposition}
\label{P : ALternative GenCG}For every $n\geq 3$ and every $%
l_{1},...,l_{n}\geq 0$, one can represent the matrix $C_{l_{1}...l_{n}}$, as
follows%
\begin{equation*}
C_{l_{1}...l_{n}}=\left\{ C_{l_{1}l_{2}l_{3}...l_{n-1}}\otimes
I_{2l_{n}+1}\right\} \left\{ (\oplus _{\lambda
_{1}=|l_{2}-l_{1}|}^{l_{2}+l_{1}}...\oplus _{\lambda _{n-2}=|l_{n}-\lambda
_{n-3}|}^{l_{n}+\lambda _{n-3}}C_{\lambda _{n-2}l_{n}}\right\} \text{,}
\end{equation*}%
where $I_{m}$ indicates a $m\times m$ identity matrix. Also, one has that%
\begin{eqnarray*}
C_{l_{1}...l_{n}} &=&(C_{l_{1}l_{2}}\otimes I_{2l_{3}+1}\otimes ...\otimes
I_{2l_{n}+1})\times \left[ (\oplus _{\lambda
=|l_{2}-l_{1}|}^{l_{2}+l_{1}}C_{\lambda l_{3}})\otimes ...\otimes
I_{2l_{n}+1}\right] \\
&&\times ...\times \left[ (\oplus _{\lambda
_{1}=|l_{2}-l_{1}|}^{l_{2}+l_{1}}...\oplus _{\lambda _{n-2}=|l_{n}-\lambda
_{n-3}|}^{l_{n}+\lambda _{n-3}}C_{\lambda _{n-2}l_{n}}\right] \text{,}
\end{eqnarray*}%
where $\times $ stands for the usual product between matrices.
\end{proposition}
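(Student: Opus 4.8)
The plan is to establish the first (two--factor) identity and then obtain the full factorization by a straightforward induction on $n$. The first identity is essentially a matrix reformulation of the recursive (convolution) structure of the generalized Clebsch--Gordan coefficients, so the natural route is to reduce the tensor product in (\ref{his}) one factor at a time, keeping track of the intertwining unitaries by means of the elementary Kronecker identities (\ref{KronSum})--(\ref{KronDec}).

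Concretely, I would group the first $n-1$ factors of $D^{l_1}(\cdot)\otimes\cdots\otimes D^{l_n}(\cdot)$ and apply Definition B to $C_{l_1...l_{n-1}}$ to write $D^{l_1}\otimes\cdots\otimes D^{l_{n-1}}=C_{l_1...l_{n-1}}\,(\oplus_{\lambda_1}\cdots\oplus_{\lambda_{n-2}}D^{\lambda_{n-2}})\,C_{l_1...l_{n-1}}^{\ast}$. Tensoring on the right by $D^{l_n}$ and using (\ref{KronDec}) together with the standard mixed--product rule $(PQ)\otimes(RS)=(P\otimes R)(Q\otimes S)$ (of which (\ref{KronDec}) is the special case obtained by inserting trivial identity blocks), the two outer unitaries factor out as $C_{l_1...l_{n-1}}\otimes I_{2l_n+1}$ and its adjoint, leaving the inner matrix $(\oplus_{\lambda_1}\cdots\oplus_{\lambda_{n-2}}D^{\lambda_{n-2}})\otimes D^{l_n}$. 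By (\ref{KronDistr}) this equals $\oplus_{\lambda_1}\cdots\oplus_{\lambda_{n-2}}(D^{\lambda_{n-2}}\otimes D^{l_n})$, and decomposing each block $D^{\lambda_{n-2}}\otimes D^{l_n}$ via (\ref{clebun}) and then collecting the block--diagonal unitaries through (\ref{KronSum}) yields $D^{l_1}\otimes\cdots\otimes D^{l_n}=(C_{l_1...l_{n-1}}\otimes I_{2l_n+1})(\oplus_{\lambda_1}\cdots\oplus_{\lambda_{n-2}}C_{\lambda_{n-2}l_n})\,\mathcal{D}\,(\oplus C_{\lambda_{n-2}l_n}^{\ast})(C_{l_1...l_{n-1}}^{\ast}\otimes I_{2l_n+1})$, where $\mathcal{D}$ is precisely the canonical nested direct sum (\ref{name}). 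Reading off the left intertwiner gives the first displayed identity.

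To be sure the resulting product is exactly $C_{l_1...l_n}$ (and not merely some intertwiner, which by Schur's lemma is only determined up to a scalar on each irreducible block), I would confirm the normalization and the block ordering at the level of entries. Expanding $(C_{l_1...l_{n-1}}\otimes I_{2l_n+1})(\oplus_{\lambda_{n-2}}C_{\lambda_{n-2}l_n})$ in coordinates, the Kronecker factor contributes $C_{l_1m_1...l_{n-1}m_{n-1}}^{\lambda_1...\lambda_{n-2}\mu_{n-2}}\delta_{m_n}^{m_n'}$, the block--diagonal factor selects the block indexed by $(\lambda_1,...,\lambda_{n-2})$ and contributes $C_{\lambda_{n-2}\mu_{n-2}l_n m_n}^{\lambda_{n-1}\mu_{n-1}}$, and summing the shared index $\mu_{n-2}$ reproduces exactly the convolution formula for $C_{l_1m_1...l_n m_n}^{\lambda_1...\lambda_{n-1}\mu_{n-1}}$ recorded above; here the orders $\prec_r$ and $\prec_c$ are used to verify that rows and columns are listed consistently on both sides.

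Finally, the full factorization follows by induction on $n$, the base case $n=3$ being the first identity itself. Using $(AB)\otimes I=(A\otimes I)(B\otimes I)$, I would tensor the level--$(n-1)$ factorization of $C_{l_1...l_{n-1}}$ by $I_{2l_n+1}$ — which distributes the identity across every factor — and then multiply on the right by $(\oplus_{\lambda_1}\cdots\oplus_{\lambda_{n-2}}C_{\lambda_{n-2}l_n})$, as furnished by the first identity. The only genuinely delicate point is the bookkeeping of the ordering conventions: one must check that the nested direct sums and the Kronecker products are indexed so that the block structures line up across all the factors, which is exactly what the orders $\prec_0$ and $\prec_c$ guarantee. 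Once this alignment is verified, every remaining step is a mechanical application of (\ref{KronSum})--(\ref{KronDec}).
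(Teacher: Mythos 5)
Your proof is correct. Note that the paper itself offers no proof of this proposition: it is stated as an immediate consequence of Definition B, of the remark on the iterative equivalence of the representations (\ref{his}) and (\ref{name}), and of the displayed convolution formula for the coefficients $C_{l_{1}m_{1}...l_{n}m_{n}}^{\lambda _{1}...\lambda _{n-1},\mu _{n-1}}$; your argument is precisely the formalization of that implicit route. Both halves are sound: the intertwiner computation --- peeling off the last tensor factor, transporting the unitaries through (\ref{KronSum}), (\ref{KronDistr}), (\ref{KronDec}) and (\ref{clebun}), and recognizing the canonical nested direct sum (\ref{name}) in the middle --- and the induction on $n$ via $(AB)\otimes I=(A\otimes I)(B\otimes I)$ together with associativity of $\otimes $. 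The step you add that the paper glosses over is also the essential one: since Definition B characterizes $C_{l_{1}...l_{n}}$ only as \emph{some} unitary intertwiner, Schur's lemma leaves such a matrix undetermined up to a block-unitary ambiguity on isotypic components, and it is your entrywise check --- that $(C_{l_{1}...l_{n-1}}\otimes I_{2l_{n}+1})(\oplus _{\lambda _{1}}\cdot \cdot \cdot \oplus _{\lambda _{n-2}}C_{\lambda _{n-2}l_{n}})$ has entries $\sum_{\mu _{n-2}}C_{l_{1}m_{1}...l_{n-1}m_{n-1}}^{\lambda _{1}...\lambda _{n-2}\mu _{n-2}}C_{\lambda _{n-2}\mu _{n-2}l_{n}m_{n}}^{\lambda _{n-1}\mu _{n-1}}$, i.e.\ exactly the convolution that the paper records as the definition of the entries of $C_{l_{1}...l_{n}}$ --- that legitimately identifies the product with $C_{l_{1}...l_{n}}$ rather than with some other intertwiner. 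Your ordering bookkeeping also lines up as claimed: the columns of $C_{l_{1}...l_{n-1}}\otimes I_{2l_{n}+1}$ and the rows of the block-diagonal factor are both enumerated with the symbol $(\lambda _{1},...,\lambda _{n-2})$ varying slowest (via $\prec _{0}$), then $\mu _{n-2}$, then $m_{n}$, so the matrix product is literally the sum over the shared index $\mu _{n-2}$.
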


\bigskip

\textbf{Definition C}. For every $n\geq 3$ and every $l_{1},...,l_{n}\geq 0$%
, we define $E_{l_{1}...l_{n}}$ to be the $\Pi _{j=1}^{n}\left(
2l_{j}+1\right) $ $\times $ $\Pi _{j=1}^{n}\left( 2l_{j}+1\right) $ square
matrix
\begin{equation}
E_{l_{1}...l_{n}}:=\oplus _{\lambda
_{1}=|l_{2}-l_{1}|}^{l_{2}+l_{1}}...\oplus _{\lambda _{n-1}=|l_{n}-\lambda
_{n-2}|}^{l_{n}+\lambda _{n-2}}\delta _{\lambda _{n-1}}^{0}I_{2\lambda
_{n-1}+1}\text{.}  \label{matE}
\end{equation}%
In other words, $E_{l_{1}...l_{n}}$ is the diagonal matrix built from the
matrix (\ref{remark}), by replacing every block of the type $D^{\lambda
_{n-1}}\left( g\right) $, with $\lambda _{n-1}>0$, with a $\left( 2\lambda
_{n-1}+1\right) \times \left( 2\lambda _{n-1}+1\right) $ zero matrix, and by
letting the $1\times 1$ blocks $D^{0}\left( g\right) =1$ unchanged. For
instance, by setting $n=3$ and $l_{1}=l_{2}=l_{3}=1$ (and by using (\ref%
{matriciona})) one obtains a $27\times 27$ matrix $E_{111}$ whose entries
are all zero, except for the fourth element (starting from the top-left
corner) of the main diagonal.

\bigskip

The following result states that the matrix $\Delta _{l_{1}...l_{n}}$ can be
diagonalized in terms of $C_{l_{1}...l_{n}}$ and $E_{l_{1}...l_{n}}$.

\begin{proposition}
\label{propb}The matrix $\Delta _{l_{1}...l_{n}}$ can be diagonalized as
\begin{equation}
\Delta
_{l_{1}...l_{n}}=C_{l_{1}...l_{n}}E_{l_{1}...l_{n}}C_{l_{1}...l_{n}}^{\ast }%
\text{ ,}  \label{form2}
\end{equation}%
where $E_{l_{1}...l_{n}}$ is the matrix introduced in Definition C.
\end{proposition}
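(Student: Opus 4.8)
The plan is to diagonalize $\Delta_{l_1\ldots l_n}$ by exploiting the unitary equivalence between the tensor-product representation and the iterated direct-sum representation, exactly as in the $n=2$ case treated in Proposition \ref{P : Poly n=2}, but now using the generalized Clebsch-Gordan matrix $C_{l_1\ldots l_n}$ of Definition B. By the definition of $C_{l_1\ldots l_n}$ as the intertwining unitary between (\ref{his}) and (\ref{name}), one has for every $g\in SO(3)$ the identity
\begin{equation*}
D^{l_1}(g)\otimes\cdots\otimes D^{l_n}(g)=C_{l_1\ldots l_n}\left\{\oplus_{\lambda_1=|l_2-l_1|}^{l_2+l_1}\cdots\oplus_{\lambda_{n-1}=|l_n-\lambda_{n-2}|}^{l_n+\lambda_{n-2}}D^{\lambda_{n-1}}(g)\right\}C_{l_1\ldots l_n}^{\ast}.
\end{equation*}
The strategy is to integrate this identity over $SO(3)$ with respect to the Haar measure, pulling the integral inside the conjugation by the constant unitary $C_{l_1\ldots l_n}$.

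First I would substitute the displayed equivalence into the definition (\ref{not1}) of $\Delta_{l_1\ldots l_n}$ and use the fact that $C_{l_1\ldots l_n}$ does not depend on $g$, so that
\begin{equation*}
\Delta_{l_1\ldots l_n}=C_{l_1\ldots l_n}\left[\int_{SO(3)}\left\{\oplus_{\lambda_1}\cdots\oplus_{\lambda_{n-1}}D^{\lambda_{n-1}}(g)\right\}dg\right]C_{l_1\ldots l_n}^{\ast}.
\end{equation*}
The next step is to evaluate the bracketed integral block by block. Since the integral of a direct sum is the direct sum of the integrals, I would integrate each diagonal block $D^{\lambda_{n-1}}(g)$ separately. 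Here I invoke the same Peter-Weyl orthogonality argument already used in the proof of Proposition \ref{P : Poly n=2}: for $\lambda_{n-1}>0$ the entries of $D^{\lambda_{n-1}}(\cdot)$ are nontrivial matrix coefficients of a nonconstant irreducible representation, hence orthogonal to the constants, so $\int_{SO(3)}D^{\lambda_{n-1}}(g)\,dg$ is the zero matrix of the appropriate size; whereas for $\lambda_{n-1}=0$ one has $D^0(g)\equiv 1$ and $\int_{SO(3)}dg=1$, leaving a $1\times1$ block equal to one. Comparing this block-by-block evaluation with Definition C shows precisely that the bracketed integral equals $E_{l_1\ldots l_n}$, which by construction retains a $1$ in each position where $\lambda_{n-1}=0$ and zeros elsewhere.

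Combining the two displays yields (\ref{form2}) immediately. The one point requiring genuine care — and the only place where this proof is more than a routine transcription of the $n=2$ argument — is matching the indexing conventions so that the surviving $1$'s in the integrated direct sum land in exactly the diagonal positions prescribed by $E_{l_1\ldots l_n}$ in (\ref{matE}). This amounts to checking that the ordering $\prec_0$ on the vectors $(\lambda_1,\ldots,\lambda_{n-1})$ used to assemble the block-diagonal matrix (\ref{remark}) is the \emph{same} ordering used to assemble $E_{l_1\ldots l_n}$, so that the selector $\delta_{\lambda_{n-1}}^0$ picks out the correct blocks; this is true by Definition C, which builds $E_{l_1\ldots l_n}$ from the very matrix (\ref{remark}) by the stated replacement rule, so the positions are forced to agree. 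I expect no further obstacle, since the unitarity of $C_{l_1\ldots l_n}$ and the linearity of the Haar integral do all the remaining work.
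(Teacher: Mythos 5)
Your proof is correct and follows essentially the same route as the paper's: both substitute the intertwining identity $D^{l_{1}}(g)\otimes\cdots\otimes D^{l_{n}}(g)=C_{l_{1}\ldots l_{n}}\bigl\{\oplus_{\lambda_{1}}\cdots\oplus_{\lambda_{n-1}}D^{\lambda_{n-1}}(g)\bigr\}C_{l_{1}\ldots l_{n}}^{\ast}$ into (\ref{not1}), pull the constant unitaries outside the Haar integral by linearity, and evaluate blockwise, with $\int_{SO(3)}D^{\lambda_{n-1}}(g)\,dg$ equal to the zero matrix for $\lambda_{n-1}>0$ (by Peter--Weyl orthogonality, as in Proposition \ref{P : Poly n=2}) and equal to $1$ for $\lambda_{n-1}=0$, which is exactly $E_{l_{1}\ldots l_{n}}$ of Definition C. Your closing check that the ordering $\prec_{0}$ places the surviving $1$'s in the positions prescribed by (\ref{matE}) is a point the paper leaves implicit, but it does not change the argument.
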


\begin{proof}
One has that
\begin{eqnarray}
\Delta _{l_{1}...l_{n}} &=&\int_{SO\left( 3\right) }D^{l_{1}}\left( g\right)
\otimes D^{l_{2}}\left( g\right) \otimes \cdot \cdot \cdot \otimes
D^{l_{n}}\left( g\right) dg  \label{u} \\
&=&\int_{SO\left( 3\right) }\left[ C_{l_{1}...l_{n}}\oplus _{\lambda
_{1}=|l_{2}-l_{1}|}^{l_{2}+l_{1}}\oplus _{\lambda _{2}=|l_{3}-\lambda
_{1}|}^{l_{3}+\lambda _{1}}...\oplus _{\lambda _{n-1}=|l_{n}-\lambda
_{n-2}|}^{l_{n}+\lambda _{n-2}}D^{\lambda _{n-1}}(g)C_{l_{1}...l_{n}}^{\ast }%
\right] dg.  \notag
\end{eqnarray}%
By linearity and by the definition of the integral of a matrix-valued
function, one has that the last line of (\ref{u}) equals
\begin{equation*}
C_{l_{1}...l_{n}}\left[ \oplus _{\lambda
_{1}=|l_{2}-l_{1}|}^{l_{2}+l_{1}}\oplus _{\lambda _{2}=|l_{3}-\lambda
_{1}|}^{l_{3}+\lambda _{1}}...\oplus _{\lambda _{n-1}=|l_{n}-\lambda
_{n-2}|}^{l_{n}+\lambda _{n-2}}\int_{SO\left( 3\right) }D^{\lambda
_{n-1}}(g)dg\right] \text{ }C_{l_{1}...l_{n}}^{\ast }\text{.}
\end{equation*}%
Now observe that, if $\lambda _{n-1}>0$, then $\int_{SO\left( 3\right)
}D^{\lambda _{n-1}}(g)dg$ equals a $\left( 2\lambda _{n-1}+1\right) \times
\left( 2\lambda _{n-1}+1\right) $ zero matrix, whereas $\int_{SO\left(
3\right) }D^{0}(g)dg=\int_{SO\left( 3\right) }1dg=1$. The conclusion is
obtained by resorting to the definition of $E_{l_{1}...l_{n}}$ given in (\ref%
{matE}).
\end{proof}

\subsection{Existence and characterization of reduced polyspectra of
arbitrary orders}

Combining the previous Proposition with (\ref{propa}), we obtain the main
result of this paper.

\begin{theorem}
\label{taqqu}If a random field is strongly isotropic with finite moments of
order $n\geq 3$, then for every $l_{1},...,l_{n}$ there exists two arrays $%
P_{l_{1}....l_{n}}(\lambda _{1},...,\lambda _{n-3})$ and $%
P_{l_{1}....l_{n}}^{C}(\lambda _{1},...,\lambda _{n-3}),$ with $%
|l_{2}-l_{1}|\leq \lambda _{1}\leq l_{2}+l_{1},$ $|l_{3}-\lambda _{1}|\leq
\lambda _{2}\leq l_{3}+\lambda _{1},...,|l_{n-2}-\lambda _{n-4}|$ $\leq
\lambda _{n-3}\leq l_{n-2}+\lambda _{n-4},$ such that%
\begin{eqnarray}
Ea_{l_{1}m_{1}}...a_{l_{n}m_{n}} &=&(-1)^{m_{n}}\sum_{\lambda
_{1}=l_{2}-l_{1}}^{l_{2}+l_{1}}...\sum_{\lambda
_{n-3}}C_{l_{1}m_{1}....l_{n-1}m_{n-1}}^{\lambda _{1}...\lambda
_{n-3}l_{n}-m_{n}}P_{l_{1}....l_{n}}(\lambda _{1},...,\lambda _{n-3})\text{ }
\label{Spectre1} \\
\mathrm{Cum}\left\{ a_{l_{1}m_{1}},...,a_{l_{n}m_{n}}\right\}
&=&(-1)^{m_{n}}\sum_{\lambda _{1}=l_{2}-l_{1}}^{l_{2}+l_{1}}...\sum_{\lambda
_{n-3}}C_{l_{1}m_{1}....l_{n-1}m_{n-1}}^{\lambda _{1}...\lambda
_{n-3}l_{n}-m_{n}}P_{l_{1}....l_{n}}^{C}(\lambda _{1},...,\lambda _{n-3})
\label{Spectre11/2} \\
C_{l_{1}m_{1}....l_{n-1}m_{n-1}}^{\lambda _{1}...\lambda _{n-3};l_{n}-m_{n}}
&=&\sum_{\mu _{1}}...\sum_{\mu _{n-3}}C_{l_{1}m_{1}l_{2}m_{2}}^{\lambda
_{1}\mu _{1}}C_{\lambda _{1}\mu _{1}l_{3}m_{3}}^{\lambda _{2}\mu
_{2}}...C_{\lambda _{n-3}\mu _{n-3}l_{n-1}m_{n-1}}^{l_{n},-m_{n}}\text{ .}
\label{Spectre2}
\end{eqnarray}
\end{theorem}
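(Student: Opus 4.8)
The plan is to read off both identities from the spectral picture furnished by Proposition \ref{propb}, combined with the eigenvector property established in Propositions \ref{propa} and \ref{P : Cum_as_eig}. First I would observe that the diagonal matrix $E_{l_1\ldots l_n}$ in (\ref{matE}) carries only $0$'s and $1$'s on its diagonal, so $E_{l_1\ldots l_n}^{2}=E_{l_1\ldots l_n}$; since $C_{l_1\ldots l_n}$ is unitary, the factorization (\ref{form2}) shows that $\Delta_{l_1\ldots l_n}=C_{l_1\ldots l_n}E_{l_1\ldots l_n}C_{l_1\ldots l_n}^{\ast}$ is an orthogonal projection. By (\ref{crurel}) the moment vector $S_{l_1\ldots l_n}$ is fixed by $\Delta_{l_1\ldots l_n}$, and by (\ref{Spectre3}) so is the cumulant vector $S_{l_1\ldots l_n}^{c}$; hence both lie in the range of this projection. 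That range is spanned precisely by those columns of $C_{l_1\ldots l_n}$ whose direct-sum label carries $\lambda_{n-1}=0$, since $E_{l_1\ldots l_n}$ annihilates every other column.

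The next step is to describe these columns. A label $(\lambda_1,\ldots,\lambda_{n-1},\mu_{n-1})$ has $\lambda_{n-1}=0$ only if $|l_n-\lambda_{n-2}|\le 0$, which forces $\lambda_{n-2}=l_n$ and $\mu_{n-1}=0$; the surviving free labels are then exactly $\lambda_1,\ldots,\lambda_{n-3}$, subject to the triangle ranges displayed in the statement. These columns are orthonormal, being columns of the unitary matrix $C_{l_1\ldots l_n}$, so $S_{l_1\ldots l_n}$ and $S_{l_1\ldots l_n}^{c}$ admit unique expansions as linear combinations of them; the coefficients of these expansions are what I would take as the arrays $P_{l_1\ldots l_n}(\lambda_1,\ldots,\lambda_{n-3})$ and $P_{l_1\ldots l_n}^{C}(\lambda_1,\ldots,\lambda_{n-3})$. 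This already yields existence, and it remains to identify the entries of the relevant columns with the convolution (\ref{Spectre2}).

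For this last point I would substitute $\lambda_{n-2}=l_n$ and $\lambda_{n-1}=\mu_{n-1}=0$ into the convolution representation of $C_{l_1m_1\ldots l_nm_n}^{\lambda_1\ldots\lambda_{n-1},\mu_{n-1}}$ recorded after Definition B. The final factor then becomes $C_{l_n\mu_{n-2}l_nm_n}^{00}$, which by \cite[formula 8.5.1.1]{VMK} (as in the Remark following Proposition \ref{P : Poly n=2}) vanishes unless $\mu_{n-2}=-m_n$, in which case it equals $(-1)^{m_n}c_{l_n}$ for a constant $c_{l_n}$ depending only on $l_n$. This collapses the sum over $\mu_{n-2}$, extracts the prefactor $(-1)^{m_n}$, and turns the penultimate factor into $C_{\lambda_{n-3}\mu_{n-3}l_{n-1}m_{n-1}}^{l_n,-m_n}$, so that what remains is exactly the reduced convolution $C_{l_1m_1\ldots l_{n-1}m_{n-1}}^{\lambda_1\ldots\lambda_{n-3};l_n,-m_n}$ of (\ref{Spectre2}). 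Absorbing the constant $c_{l_n}$ into $P_{l_1\ldots l_n}$ and $P_{l_1\ldots l_n}^{C}$ then converts the two expansions into (\ref{Spectre1}) and (\ref{Spectre11/2}).

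I expect the main obstacle to be the combinatorial bookkeeping of the second paragraph: proving cleanly that $\lambda_{n-1}=0$ forces $\lambda_{n-2}=l_n$ and isolates $\lambda_1,\ldots,\lambda_{n-3}$ as the genuine free parameters, and then carrying out the collapse of the terminal Clebsch-Gordan coefficient while keeping the signs and the normalization under control, so that the factor $(-1)^{m_n}$ emerges exactly as stated.
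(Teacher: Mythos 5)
Your proposal is correct and follows essentially the same route as the paper's own proof: both exploit Propositions \ref{propa}, \ref{P : Cum_as_eig} and \ref{propb} to conclude that $S_{l_{1}\ldots l_{n}}$ and $S_{l_{1}\ldots l_{n}}^{c}$ lie in the span of the columns of $C_{l_{1}\ldots l_{n}}$ selected by the unit entries of $E_{l_{1}\ldots l_{n}}$ (i.e.\ those with $\lambda_{n-1}=0$, forcing $\lambda_{n-2}=l_{n}$, $\mu_{n-1}=0$), and both then collapse the terminal Clebsch--Gordan factor via $C_{l_{1}m_{1}l_{2}m_{2}}^{0m}=\frac{(-1)^{m_{1}}}{2l_{1}+1}\delta _{l_{1}}^{l_{2}}\delta _{m_{1}}^{-m_{2}}\delta _{m}^{0}$ to extract the sign $(-1)^{m_{n}}$ and reduce the convolution to (\ref{Spectre2}), absorbing the constant $\frac{1}{2l_{n}+1}$ into the arrays. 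The only item the paper treats that you omit is the verification (by specializing to $m_{1}=\cdots =m_{n}=0$ and using linear independence of the columns) that the resulting coefficients are real, a property asserted in the Remark following the theorem though not in the literal statement.
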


\bigskip

\noindent \textbf{Remark. }For a fixed $n\geq 2$, the real-valued arrays $%
\left\{ P_{l_{1}...l_{n}}\left( \cdot \right) :l_{1},...,l_{n}\geq 0\right\}
$ and $\left\{ P_{l_{1}...l_{n}}^{C}\left( \cdot \right)
:l_{1},...,l_{n}\geq 0\right\} $ are, respectively, the \textsl{reduced
polyspectrum of order }$n-1$ and the \textsl{reduced cumulant polyspectrum
of order }$n-1$ associated with the underlying strongly isotropic random
field.

\bigskip

\begin{proof}[Proof of Theorem \ref{taqqu}]
We shall prove only (\ref{Spectre1}), since the proof of (\ref{Spectre11/2})
is entirely analogous. By Proposition \ref{propa} and Proposition \ref{propb}%
, if the random field is isotropic, then
\begin{equation*}
S_{l_{1}...l_{n}}=C_{l_{1}...l_{n}}E_{l_{1}...l_{n}}C_{l_{1}...l_{n}}^{\ast
}S_{l_{1}...l_{n}}\text{ ,}
\end{equation*}%
that is, because $C_{l_{1}...l_{n}}$ is unitary
\begin{equation*}
C_{l_{1}...l_{n}}^{\ast
}S_{l_{1}...l_{n}}=E_{l_{1}...l_{n}}C_{l_{1}...l_{n}}^{\ast
}S_{l_{1}...l_{n}}\text{ .}
\end{equation*}%
It follows that $S_{l_{1}...l_{n}}$ is a solution if and only if the column
vector $C_{l_{1}...l_{n}}^{\ast }S_{l_{1}...l_{n}}$ has zeroes corresponding
to the zeroes of $E_{l_{1}...l_{n}},$ whereas the elements corresponding to
unity can be arbitrary. In view of the orthonormality properties of $%
C_{l_{1}...l_{n}}^{\ast },$ this condition is met if, and only if, $%
S_{l_{1}...l_{n}}$ is a linear combination of the columns in the matrix $%
C_{l_{1}...l_{n}}^{\ast }$ corresponding to non-zero elements of the
diagonal $E_{l_{1}...l_{n}}.$ These linear combinations can be written
explicitly as
\begin{eqnarray*}
&&\sum_{\lambda _{1}=l_{2}-l_{1}}^{l_{2}-l_{1}}\sum_{\lambda
_{2}=l_{3}-\lambda _{1}}^{l_{3}+\lambda _{1}}...\sum_{\lambda
_{n-1}=l_{n}-\lambda _{n-2}}^{l_{n}+\lambda
_{n-2}}C_{l_{1}m_{1}....l_{n}m_{n}}^{\lambda _{1}...\lambda _{n-2}lm}%
\widetilde{P}_{l_{1}....l_{n}}(\lambda _{1},...,\lambda _{n-3},\lambda
_{n-2})\delta _{l}^{0} \\
&=&\sum_{\lambda _{1}=l_{2}-l_{1}}^{l_{2}-l_{1}}\sum_{\lambda
_{2}=l_{3}-\lambda _{1}}^{l_{3}+\lambda _{1}}...\sum_{\lambda
_{n-1}=l_{n}-\lambda _{n-2}}^{l_{n}+\lambda _{n-2}}\left\{ \sum_{\mu
_{1}...\mu _{n-2}}C_{l_{1}m_{1}l_{2}m_{2}}^{\lambda _{1}\mu _{1}}C_{\lambda
_{1}\mu _{1}l_{3}m_{3}}^{\lambda _{2}\mu _{2}}...C_{\lambda _{n-2}\mu
_{n-2}.l_{n}m_{n}}^{lm}\delta _{l}^{0}\right\} \times \\
&&\text{ \ \ \ \ \ \ \ \ \ \ \ \ \ \ \ \ \ \ \ \ \ \ \ \ \ \ \ \ \ \ \ \ \ \
\ \ \ \ \ \ \ \ \ \ \ \ \ \ \ \ \ \ \ \ \ \ \ \ \ \ \ \ \ \ \ \ \ \ \ \ \ \
\ \ \ \ \ \ \ \ \ }\times \widetilde{P}_{l_{1}....l_{n}}(\lambda
_{1},...,\lambda _{n-3},\lambda _{n-2}) \\
&=&\sum_{\lambda _{1}=l_{2}-l_{1}}^{l_{2}-l_{1}}\sum_{\lambda
_{2}=l_{3}-\lambda _{1}}^{l_{3}+\lambda _{1}}...\sum_{\lambda
_{n-1}=l_{n}-\lambda _{n-2}}^{l_{n}+\lambda _{n-2}}\left\{ \sum_{\mu
_{1}...\mu _{n-2}}C_{l_{1}m_{1}l_{2}m_{2}}^{\lambda _{1}\mu _{1}}C_{\lambda
_{1}\mu _{1}l_{3}m_{3}}^{\lambda _{2}\mu _{2}}...C_{\lambda _{n-2}\mu
_{n-2}.l_{n}m_{n}}^{00}\right\} \times \\
&&\text{ \ \ \ \ \ \ \ \ \ \ \ \ \ \ \ \ \ \ \ \ \ \ \ \ \ \ \ \ \ \ \ \ \ \
\ \ \ \ \ \ \ \ \ \ \ \ \ \ \ \ \ \ \ \ \ \ \ \ \ \ \ \ \ \ \ \ \ \ \ \ \ \
\ \ \ \ \ \ \ \ \ \ }\times \widetilde{P}_{l_{1}....l_{n}}(\lambda
_{1},...,\lambda _{n-3},\lambda _{n-2}).
\end{eqnarray*}%
Recalling again that%
\begin{equation*}
C_{l_{1}m_{1}l_{2}m_{2}}^{0m}=\frac{(-1)^{m_{1}}}{2l_{1}+1}\delta
_{l_{1}}^{l_{2}}\delta _{m_{1}}^{-m_{2}}\delta _{m}^{0}\text{ ,}
\end{equation*}%
(see \cite{VMK}, 8.5.1.1), we obtain that
\begin{eqnarray*}
&=&\sum_{\lambda _{1}=l_{2}-l_{1}}^{l_{2}-l_{1}}\sum_{\lambda
_{2}=l_{3}-\lambda _{1}}^{l_{3}+\lambda _{1}}...\sum_{\lambda
_{n-1}=l_{n}-\lambda _{n-2}}^{l_{n}+\lambda _{n-2}}\left\{ \sum_{\mu
_{1}...\mu _{n-2}}C_{l_{1}m_{1}l_{2}m_{2}}^{\lambda _{1}\mu _{1}}C_{\lambda
_{1}\mu _{1}l_{3}m_{3}}^{\lambda _{2}\mu _{2}}...\frac{(-1)^{m_{n}}}{2l_{n}+1%
}\delta _{\lambda _{n-2}}^{l_{n}}\delta _{\mu _{n-2}}^{-m_{n}}\right\} \times
\\
&&\text{ \ \ \ \ \ \ \ \ \ \ \ \ \ \ \ \ \ \ \ \ \ \ \ \ \ \ \ \ \ \ \ \ \ \
\ \ \ \ \ \ \ \ \ \ \ \ \ \ \ \ \ \ \ \ \ \ \ \ \ \ \ \ \ \ \ \ \ \ \ \ \ \
\ \ \ \ \ \ \ \ \ \ \ }\times \widetilde{P}_{l_{1}....l_{n}}(\lambda
_{1},...,\lambda _{n-3},\lambda _{n-2}) \\
&=&\sum_{\lambda _{1}=l_{2}-l_{1}}^{l_{2}-l_{1}}\sum_{\lambda
_{2}=l_{3}-\lambda _{1}}^{l_{3}+\lambda _{1}}...\sum_{\lambda
_{n-1}=l_{n}-\lambda _{n-2}}^{l_{n}+\lambda _{n-2}}\left\{ \sum_{\mu
_{1}...\mu _{n-2}}C_{l_{1}m_{1}l_{2}m_{2}}^{\lambda _{1}\mu _{1}}C_{\lambda
_{1}\mu _{1}l_{3}m_{3}}^{\lambda _{2}\mu _{2}}...C_{\lambda _{n-3}\mu
_{n-3}.l_{n-1}m_{n-1}}^{l_{n}-m_{n}}(-1)^{m_{n}}\right\} \times \\
&&\text{ \ \ \ \ \ \ \ \ \ \ \ \ \ \ \ \ \ \ \ \ \ \ \ \ \ \ \ \ \ \ \ \ \ \
\ \ \ \ \ \ \ \ \ \ \ \ \ \ \ \ \ \ \ \ \ \ \ \ \ \ \ \ \ \ \ \ \ \ \ \ \ \
\ \ \ \ \ \ \ \ \ \ \ \ \ \ \ \ \ }\times P_{l_{1}....l_{n}}(\lambda
_{1},...,\lambda _{n-3}) \\
&=&\sum_{\lambda _{1}=l_{2}-l_{1}}^{l_{2}-l_{1}}\sum_{\lambda
_{2}=l_{3}-\lambda _{1}}^{l_{3}+\lambda _{1}}...\sum_{\lambda
_{n-1}=l_{n}-\lambda _{n-2}}^{l_{n}+\lambda
_{n-2}}C_{l_{1}m_{1}....l_{n-1}m_{n-1}}^{\lambda _{1}...\lambda
_{n-3}l_{n}-m_{n}}P_{l_{1}....l_{n}}(\lambda _{1},...,\lambda _{n-3}),
\end{eqnarray*}%
where we have set%
\begin{equation*}
P_{l_{1}....l_{n}}(\lambda _{1},...,\lambda _{n-3}):=\frac{1}{2l_{n}+1}%
\widetilde{P}_{l_{1}....l_{n}}(\lambda _{1},...,\lambda _{n-3},l_{n})\text{ .%
}
\end{equation*}

All there is left to show is that the coefficients of this linear
combination are necessarily real. To see this, it is sufficient to
specialize the previous discussion to the case where $%
m_{1}=m_{2}=...=m_{n}=0,$, and to observe that, in this case
\begin{equation*}
Ea_{l_{1}0}...a_{l_{n}0}=\sum_{\lambda _{1}}...\sum_{\lambda
_{n-3}}C_{l_{1}0....l_{n-1}0}^{\lambda _{1}...\lambda
_{n-3}l_{n}0}P_{l_{1}....l_{n}}(\lambda _{1},...,\lambda _{n-3})
\end{equation*}%
is real by definition (note indeed that the columns of $C_{l_{1}...l_{n}}$
are linearly independent).
\end{proof}

\bigskip

Let us illustrate the previous results by some more examples.

\bigskip

\textbf{Examples. }For $n=3,$ Theorem \ref{taqqu} implies that, under
isotropy
\begin{equation*}
Ea_{l_{1}m_{1}}a_{l_{2}m_{2}}a_{l_{3}m_{3}}=(-1)^{m_{3}}C_{l_{1}m_{1}l_{2}m_{2}}^{l_{3}-m_{3}}P_{l_{1}l_{2}l_{3}}%
\text{ .}
\end{equation*}%
From this last relation, we can recover the so-called reduced bispectrum,
noted $b_{l_{1}l_{2}l_{3}},$ defined for instance in \cite{Hu}, \cite{m2006}
and \cite{MarPTRF}, which satisfies indeed the relationship%
\begin{equation*}
P_{l_{1}l_{2}l_{3}}=b_{l_{1}l_{2}l_{3}}C_{l_{1}0l_{2}0}^{l_{3}0}\sqrt{\frac{%
(2l_{1}+1)(2l_{2}+1)}{(2l_{3}+1)4\pi }}\text{ .}
\end{equation*}%
For $n=4$ (i.e. the trispectrum, \cite{Hu}) we obtain the expression%
\begin{eqnarray*}
Ea_{l_{1}m_{1}}a_{l_{2}m_{2}}a_{l_{3}m_{3}}a_{l_{4}m_{4}}
&=&(-1)^{m_{4}}\sum_{\lambda
=|l_{2}-l_{1}|}^{l_{2}+l_{1}}C_{l_{1}m_{1}l_{2}m_{2}l_{3}m_{3}}^{\lambda
l_{4}-m_{4}}P_{l_{1}l_{2}l_{3}l_{4}}(\lambda ) \\
&=&\sum_{\lambda =|l_{2}-l_{1}|}^{l_{2}+l_{1}}\sum_{\mu =-\lambda }^{\lambda
}C_{l_{1}m_{1}l_{2}m_{2}}^{\lambda \mu }C_{\lambda \mu
l_{3}m_{3}}^{l_{4}-m_{4}}P_{l_{1}l_{2}l_{3}l_{4}}(\lambda )\text{ .}
\end{eqnarray*}

\bigskip

The next result gives a further probabilistic characterization of the
reduced bispectrum.

\begin{proposition}
Fix $n\geq 2$. A real-valued array $\left\{ A_{l_{1}...l_{n}}\left( \cdot
\right) :l_{1},...,l_{n}\geq 0\right\} $ is the reduced polyspectrum of
order $n-1$ (resp. the reduced cumulant polyspectrum of order $n-1$) of some
strongly isotropic random field if, and only if, there exists a sequence $%
\left\{ X_{l}:l\geq 0\right\} $ of zero-mean real-valued random variables
such that%
\begin{equation*}
\sum_{l\geq 0}\left( 2l+1\right) E\left[ X_{l}^{2}\right] <+\infty
\end{equation*}%
and, for every $l_{1},...,l_{n}\geq 0$%
\begin{equation}
E\left( X_{l_{1}}\cdot \cdot \cdot X_{l_{n}}\right) =\sum_{\lambda
_{1}=l_{2}-l_{1}}^{l_{2}+l_{1}}...\sum_{\lambda
_{n-3}}C_{l_{1}0....l_{n-1}0}^{\lambda _{1}...\lambda
_{n-3}l_{n}0}A_{l_{1}....l_{n}}(\lambda _{1},...,\lambda _{n-3})  \label{q}
\end{equation}%
(resp.%
\begin{equation}
\mathrm{Cum}\left\{ X_{l_{1}},\cdot \cdot \cdot ,X_{l_{n}}\right\}
=\sum_{\lambda _{1}=l_{2}-l_{1}}^{l_{2}+l_{1}}...\sum_{\lambda
_{n-3}}C_{l_{1}0....l_{n-1}0}^{\lambda _{1}...\lambda
_{n-3}l_{n}0}A_{l_{1}....l_{n}}(\lambda _{1},...,\lambda _{n-3}). \,
) \label{qq}
\end{equation}
\end{proposition}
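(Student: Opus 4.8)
The plan is to reduce everything to the special case $m_{1}=\dots =m_{n}=0$ of Theorem \ref{taqqu}. Setting all $m_{i}=0$ in (\ref{Spectre1}) (resp. (\ref{Spectre11/2})) kills the factor $(-1)^{m_{n}}$ and collapses the right--hand side to the convolution $C_{l_{1}0\ldots l_{n-1}0}^{\lambda _{1}\ldots \lambda _{n-3}l_{n}0}$ of (\ref{Spectre2}) weighted by the reduced (cumulant) polyspectrum. This is exactly the right--hand side of (\ref{q}) (resp. (\ref{qq})) with $A=P$ (resp. $A=P^{C}$). Thus the whole proposition says that the admissible reduced polyspectra are precisely those whose associated zonal ($m=0$) moment, resp. cumulant, array is realizable as the joint moments, resp. cumulants, of an honest centered real sequence $\{X_{l}\}$ subject to $\sum _{l}(2l+1)E[X_{l}^{2}]<\infty$. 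The forward implication will be immediate; the reverse one requires a construction, and that is where the work lies.

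For the necessity direction I would simply take $X_{l}:=a_{l0}$, the zonal coefficients of the underlying strongly isotropic field $T$. They are centered (since $T$ is) and real (because $a_{l0}=(-1)^{0}\overline{a_{l0}}$), and Parseval together with isotropy gives $\sum _{l}(2l+1)E[X_{l}^{2}]=\sum _{l}(2l+1)C_{l}=\sum _{l,m}E|a_{lm}|^{2}=E\Vert T\Vert _{L^{2}(S^{2})}^{2}=4\pi \,E[T(x)^{2}]<\infty $, the last quantity being finite and independent of $x$ by isotropy and square--integrability. Relation (\ref{q}) (resp. (\ref{qq})) is then nothing but (\ref{Spectre1}) (resp. (\ref{Spectre11/2})) read at $m_{1}=\dots =m_{n}=0$, so this direction is finished.

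For sufficiency the task is to manufacture a strongly isotropic field out of the given $\{X_{l}\}$. I would take an auxiliary (a priori non-isotropic) centered real field $\widetilde{T}=\sum _{l,m}b_{lm}Y_{lm}$, whose spectral coefficients are built from $\{X_{l}\}$, and randomly rotate it: with $U$ a Haar element of $SO(3)$ independent of $\widetilde{T}$, set $T(x):=\widetilde{T}(U^{-1}x)$. This $T$ is automatically strongly isotropic by the very computation of Lemma \ref{L : RandomWigner} (left invariance of Haar gives $U^{-1}g\overset{d}{=}U^{-1}$, whence $\{T(gx_{i})\}\overset{d}{=}\{T(x_{i})\}$), and the hypothesis $\sum _{l}(2l+1)E[X_{l}^{2}]<\infty $ is exactly what guarantees that $x\mapsto T(x)$ is a.s.\ Lebesgue square--integrable, so that the representation (\ref{specrap}) holds. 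Since $T$ is strongly isotropic, Theorem \ref{taqqu} endows it with a reduced polyspectrum $P^{T}$, and everything reduces to forcing $P^{T}=A$.

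The main obstacle is precisely this last identification. Because $U$ is independent of $\widetilde{T}$, the spectral coefficients transform unitarily as $a_{l.}=D^{l}(U^{-1})b_{l.}$, so the full moment tensor factors as $S^{T}_{l_{1}\ldots l_{n}}=\Delta _{l_{1}\ldots l_{n}}\,E[b_{l_{1}.}\otimes \cdots \otimes b_{l_{n}.}]$, with $\Delta _{l_{1}\ldots l_{n}}$ as in (\ref{not1}); by Proposition \ref{propb} this $\Delta _{l_{1}\ldots l_{n}}=C_{l_{1}\ldots l_{n}}E_{l_{1}\ldots l_{n}}C_{l_{1}\ldots l_{n}}^{\ast }$ is exactly the orthogonal projection onto the invariant ($\lambda _{n-1}=0$) subspace, so $P^{T}$ is read off from the projected template moment tensor just as in the proof of Theorem \ref{taqqu}. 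The heart of the matter is then to choose the law of $\{b_{lm}\}$ so that this projection equals the invariant vector encoded by the prescribed array $A$, simultaneously at all orders $2,\dots ,n$ and all multi--indices, while respecting the reality constraint $b_{l,-m}=(-1)^{m}\overline{b_{lm}}$; the freedom to add to $E[b^{\otimes }]$ any tensor orthogonal to the invariant subspace (which the projection discards) is what makes the target attainable by a genuine field. Condition (\ref{q}) enters as the exact compatibility statement that the zonal component of this target is realizable by the concrete sequence $\{X_{l}\}$: feeding $\{X_{l}\}$ in as the zonal part of the template and completing the remaining entries so that the invariant projection matches $A$ yields $P^{T}=A$. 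For $n=3$ the invariant subspace is one--dimensional, so matching the zonal moments already forces $P^{T}=A$ and the argument is clean; the genuine difficulty is the consistent realization across orders when $n\geq 4$. The cumulant statement (\ref{qq}) is handled by the same scheme, with joint cumulants in place of joint moments, (\ref{Spectre11/2}) and Proposition \ref{P : Cum_as_eig} in place of (\ref{Spectre1}) and Proposition \ref{propa}, and the multilinearity of cumulants used to control the template. $\ \rule{0.5em}{0.5em}$
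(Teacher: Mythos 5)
Your necessity argument is exactly the paper's: take $X_{l}=a_{l0}$ and read (\ref{Spectre1}) (resp.\ (\ref{Spectre11/2})) at $m_{1}=\cdots =m_{n}=0$; together with your Parseval computation for the summability condition, that half is complete.

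The sufficiency half, however, contains a genuine gap, and you flag it yourself. You correctly set up the random-rotation scheme (an auxiliary field $\widetilde{T}=\sum_{l,m}b_{lm}Y_{lm}$, an independent Haar rotation, strong isotropy via Lemma \ref{L : RandomWigner}, and the factorization of the moment tensor through $\Delta _{l_{1}\ldots l_{n}}$ of (\ref{not1})), but you never specify the law of $\left\{ b_{lm}\right\} $: you declare that choosing it so that the projected template tensor realizes $A$ simultaneously at all orders is \textquotedblleft the heart of the matter\textquotedblright\ and \textquotedblleft the genuine difficulty\textquotedblright\ for $n\geq 4$, and you stop there. That unperformed step is the entire content of the implication, so the proposal does not prove it. The paper resolves it with a choice far simpler than anything you contemplate: the template is taken \emph{purely zonal}, $Z\left( x\right) =\sum_{l\geq 0}X_{l}Y_{l0}\left( x\right) $, i.e. $b_{lm}=X_{l}\delta _{m}^{0}$ with the given sequence $\left\{ X_{l}\right\} $ itself as zonal coefficients and all non-zonal coefficients equal to zero; one then sets $T\left( x\right) =Z\left( gx\right) $ with $g$ Haar-distributed and independent of $\left\{ X_{l}\right\} $. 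No freedom in the non-zonal entries is needed, so the problem you were trying to solve (completing \textquotedblleft the remaining entries so that the invariant projection matches $A$\textquotedblright ) never arises: the moments of $T$ become a fixed linear functional of the joint moments of $\left\{ X_{l}\right\} $, which hypothesis (\ref{q}) ties to $A$, while $\sum_{l}\left( 2l+1\right) E\left[ X_{l}^{2}\right] <\infty $ gives square-integrability so that (\ref{specrap}) applies. In fairness, your instinct that the identification of the reduced polyspectrum of the constructed field with $A$ deserves explicit verification is sound --- the paper asserts \textquotedblleft the desired characteristics\textquotedblright\ without writing out the projection computation --- but noting a difficulty is not resolving it; a correct write-up must at least commit to the zonal template, which your proposal does not do.
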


\begin{proof}
We shall only prove (\ref{q}). For the necessity it is enough to take $%
X_{l}=a_{l0}$, where $a_{l0}$ is the harmonic coefficient of index $\left(
l,0\right) $ associated with a strongly isotropic field with moments of all
orders. For the sufficiency, we consider first the (anisotropic) random field%
\begin{equation*}
Z\left( x\right) =\sum_{l\geq 0}X_{l}Y_{l0}\left( x\right) .
\end{equation*}%
Then, by taking $T\left( x\right) =Z\left( gx\right) $, where $g$ is sampled
randomly with the uniform Haar measure on $SO\left( 3\right) $, one obtains
a random field with the desired characteristics.
\end{proof}

\bigskip

There are two very important issues that are left open by Theorem \ref{taqqu}%
. As a first issue, it seems natural to look for characterizations of the
reduced polyspectra $P_{l_{1}...l_{n}}$, at least under natural models of
physical interest. As a second point, we note that the explicit expressions
provided in Theorem \ref{taqqu} depend on the ordering $l_{1},...,l_{n}$ we
chose for the decomposition of $\Delta _{l_{1}...l_{n}}$. In the next two
sections, we try to address these (and other) points.

\section{Some Explicit Examples \label{examples}}

In this section we provide explicit computations for the reduced polyspectra
$P_{l_{1}...l_{n}}$ ($n\geq 2$), or $P_{l_{1}...l_{n}}^{C}$, for some models
of physical interest. Of course, the Gaussian isotropic fields can be easily
dealt with. Indeed, in this case one has that $P_{l_{1}...l_{n}}^{C}=0$ for
all $n\geq 3$. In what follows, we shall therefore be concerned with
polyspectra of \textsl{Gaussian subordinated} isotropic fields, that is,
random fields that can be written as a deterministic and non-linear function
of some collection of Gaussian isotropic fields. In general, this class of
random fields allow for a clear-cut mathematical treatment, whilst covering
a great array of empirically relevant circumstances.

\subsection{A simple physical model}

The general Gaussian-subordinated model has the form%
\begin{equation}
T=\sum_{j=1}^{q}f_{j}H_{j}\left( T_{G}/\sqrt{E\left( T_{G}^{2}\right) }%
\right) =f_{1}T_{G}+f_{2}(T_{G}^{2}/E\left( T_{G}^{2}\right) -1)+...,
\label{genmod}
\end{equation}%
where $f_{j}$ is a real constant, $H_{j}(.)$ denotes the $j$th Hermite
polynomial (see e.g. \cite{Surg}), and $T_{G}$ is a Gaussian, zero-mean
isotropic random field. Note that we have implicitly defined the sequence of
Hermite polynomials in such a way that $H_{1}\left( x\right) =x$, $%
H_{2}\left( x\right) =x^{2}-1$, $H_{3}\left( x\right) =x^{3}-3x$, and so on.
In this section, when no further specification is needed, the spectral
decomposition of the underlying Gaussian field $T_{G}$ is written%
\begin{equation*}
T_{G}=\sum_{lm}a_{lm}Y_{lm}.
\end{equation*}%
We shall sometimes use the following notation
\begin{eqnarray}
T &=&\sum_{lm}\widetilde{a}_{lm}Y_{lm}=\sum_{j=1}^{q}f_{j}a_{lm}(j)Y_{lm}%
\text{ , }  \label{tmoe} \\
a_{lm}(j) &=&\int_{S^{2}}H_{j}\left( T_{G}\left( x\right) /\sqrt{E\left(
T_{G}^{2}\right) }\right) \overline{Y_{lm}}\left( x\right) dx\text{,}
\label{tmoe2} \\
\widetilde{a}_{lm} &=&\sum_{j=1}^{q}a_{lm}\left( j\right) \text{. }
\label{tmoe3}
\end{eqnarray}

\bigskip

For instance, models of Cosmic Microwave Background radiation are currently
dominated by assumptions such as the Sachs-Wolfe model with the so-called
\textsl{Bardeen's potential} (see e.g. \cite{Bart} or \cite{dodelson}). The
latter can be written down explicitly as%
\begin{equation}
T=T_{G}+f_{NL}(T_{G}^{2}-ET_{G}^{2})\text{ ,}  \label{swbar}
\end{equation}%
where $f_{NL}$ is a nonlinearity parameters which depends upon physical
constants in the associated \textsl{\textquotedblleft
slow-roll\textquotedblright\ inflationary model}\emph{\ }(see e.g. \cite%
{Bart}). Note that (\ref{swbar}) has can be written in the form (\ref{genmod}%
), by setting $f_{1}=1$, $f_{2}=f_{NL}\times E\left( T_{G}^{2}\right) $ and $%
f_{j}=0$, for $j\geq 3$. The value of the constant $f_{NL}\times E\left(
T_{G}^{2}\right) $ is expected to be very small, namely of the order $%
10^{-4} $ \cite{Bart}. To simplify the discussion, we now assume that $%
ET_{G}^{2}=1$. In this case, by using (\ref{tmoe})--(\ref{tmoe3}), one has
that%
\begin{eqnarray*}
\widetilde{a}_{lm} &=&a_{lm}+f_{NL}a_{lm}(2)\text{ , } \\
a_{lm}(2) &=&\int_{S^{2}}T^{2}\overline{Y}_{lm}dx=\int_{S^{2}}\sum_{\ell
_{1}\ell _{2}}\sum_{m_{1}m_{2}}a_{\ell _{1}m_{1}}a_{\ell _{2}m_{2}}Y_{\ell
_{1}m_{1}}Y_{\ell _{2}m_{2}}\overline{Y}_{lm}dx \\
&=&\sum_{\ell _{1}\ell _{2}}\sum_{m_{1}m_{2}}a_{\ell _{1}m_{1}}a_{\ell
_{2}m_{2}}\sqrt{\frac{(2\ell _{1}+1)(2\ell _{2}+1)}{(2l+1)4\pi }}C_{\ell
_{1}0\ell _{2}0}^{l0}C_{\ell _{1}m_{1}\ell _{2}m_{2}}^{lm}\text{ .}
\end{eqnarray*}%
It follows that%
\begin{equation*}
\widetilde{C}_{l}:=E|\widetilde{a}_{lm}|^{2}=C_{l}+2f_{NL}^{2}%
\sum_{l_{1}l_{2}}C_{l_{1}}C_{l_{2}}\frac{(2l_{1}+1)(2l_{2}+1)}{4\pi (2l+1)}%
\left( C_{l_{1}0l_{2}0}^{l0}\right) ^{2},
\end{equation*}%
so that%
\begin{eqnarray*}
Var(T) &=&\sum_{l}\frac{2l+1}{4\pi }\widetilde{C}_{l}=\sum_{l}\frac{2l+1}{%
4\pi }C_{l}+2f_{NL}^{2}\sum_{l_{1}l_{2}}C_{l_{1}}C_{l_{2}}\frac{%
(2l_{1}+1)(2l_{2}+1)}{(4\pi )^{2}}\sum_{l}\left(
C_{l_{1}0l_{2}0}^{l0}\right) ^{2} \\
&=&\sum_{l}\frac{2l+1}{4\pi }C_{l}+2f_{NL}^{2}\left\{ \sum_{l_{1}}C_{l_{1}}%
\frac{(2l_{1}+1)}{4\pi }\right\} ^{2}=Var(T_{G})+f_{NL}^{2}Var(H_{2}(T_{G}))%
\text{ ,}
\end{eqnarray*}%
as expected, due to the orthogonality properties of Hermite polynomials. For
the bispectrum, we obtain therefore%
\begin{equation*}
E\widetilde{a}_{l_{1}m_{1}}\widetilde{a}_{l_{2}m_{2}}\widetilde{a}%
_{l_{3}m_{3}}=E\left\{
(a_{l_{1}m_{1}}+f_{2}a_{l_{1}m_{1}}(2))(a_{l_{2}m_{2}}+f_{2}a_{l_{2}m_{2}}(2))(a_{l_{3}m_{3}}+f_{2}a_{l_{3}m_{3}}(2))\right\}
\end{equation*}%
\begin{eqnarray*}
&=&f_{2}Ea_{l_{1}m_{1}}(2)a_{l_{2}m_{2}}a_{l_{3}m_{3}}+f_{2}Ea_{l_{1}m_{1}}a_{l_{2}m_{2}}(2)a_{l_{3}m_{3}}
\\
&&+f_{2}Ea_{l_{1}m_{1}}a_{l_{2}m_{2}}a_{l_{3}m_{3}}(2)+f_{2}^{3}Ea_{l_{1}m_{1}}(2)a_{l_{2}m_{2}}(2)a_{l_{3}m_{3}}(2)
\\
&=&(-1)^{m_{3}}C_{l_{1}m_{1}l_{2}m_{2}}^{l_{3}-m_{3}}P_{l_{1}l_{2}l_{3}}%
\text{ ,}
\end{eqnarray*}%
where%
\begin{equation}
P_{l_{1}l_{2}l_{3}}=6f_{2}\sqrt{\frac{(2l_{1}+1)(2l_{2}+1)}{(2l_{3}+1)4\pi }}%
C_{l_{1}0l_{2}0}^{l_{3}0}\left\{
C_{l_{1}}C_{l_{2}}+C_{l_{1}}C_{l_{3}}+C_{l_{2}}C_{l_{3}}\right\}
\label{bard1}
\end{equation}%
\begin{eqnarray}
&&+f_{2}^{3}\sum_{\ell _{1}\ell _{2}\ell _{3}}C_{\ell _{1}0\ell
_{2}0}^{l_{1}0}C_{\ell _{1}0\ell _{3}0}^{l_{2}0}C_{\ell _{2}0\ell
_{3}0}^{l_{3}0}\frac{(2\ell _{1}+1)(2\ell _{2}+1)(2\ell _{3}+1)}{\sqrt{(4\pi
)^{3}}}\times  \label{bard2} \\
&&\text{ \ \ \ \ \ \ \ \ \ \ \ \ \ \ \ \ \ \ \ \ \ \ \ \ \ \ \ \ \ \ \ \ \ \
\ \ \ \ \ \ \ \ \ \ }\times \frac{8(-1)^{l_{3}}}{\sqrt{2l_{3}+1}}\left\{
\begin{tabular}{lll}
$\ell _{1}$ & $\ell _{2}$ & $\ell _{3}$ \\
$l_{3}$ & $l_{2}$ & $l_{1}$%
\end{tabular}%
\right\} \left\{ C_{\ell _{1}}C_{\ell _{2}}C_{\ell _{3}}\right\} \text{ .}
\notag
\end{eqnarray}%
The lack of symmetry with respect to the $l_{3}$ term is only apparent and
can be easily dispensed with by permuting the multipoles in $%
C_{l_{1}m_{1}l_{2}m_{2}}^{l_{3}m_{3}}$ or using expression (\ref{clewig2}).
Formula (\ref{bard1}) is consistent with the cosmological literature, where (%
\ref{bard2}) is considered a higher order term and hence neglected (see
again (\cite{Hu})).

\subsection{The Connection with Higher Order Moments}

We now provide a simple result, connecting the reduced polyspectrum with the
higher order moments of the associated spherical random field.

\begin{proposition}
The following identity holds for every isotropic field with finite moments
of order $p$ and with a reduced polyspectrum $\left\{
P_{l_{1}...l_{p}}\left( \cdot \right) :l_{1},...,l_{p}\geq 0\right\} $: for
every $x\in S^{2}$,%
\begin{equation*}
ET\left( x\right) ^{p}\equiv \sum_{l_{1}...l_{p}}\sqrt{\frac{(2l_{1}+1)\cdot
\cdot \cdot (2l_{p}+1)}{(4\pi )^{p}}}\sum_{\lambda _{1}...\lambda
_{p-3}}P_{l_{1}...l_{p}}(\lambda _{1},...,\lambda
_{p-3})C_{l_{1}0...l_{p-2}0}^{\lambda _{1}...\lambda _{p-3}l_{p}0}\text{.}
\end{equation*}
\end{proposition}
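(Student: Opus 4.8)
The plan is to combine the spectral representation (\ref{specrap}) with the polyspectrum characterization of Theorem \ref{taqqu}, after reducing the computation to a single distinguished point of the sphere. Since $T$ is isotropic with finite moments of order $p$, Proposition \ref{P : EasyIso} guarantees that $T$ is $p$-weakly isotropic, so the map $x\mapsto ET(x)^{p}$ is constant on $S^{2}$. I would therefore evaluate it at the North Pole $n$ (i.e. $\vartheta=0$), where the spherical harmonics collapse: from the explicit formulae for $Y_{lm}$, together with $P_{lm}(1)=\delta_{m}^{0}$ and $P_{l}(1)=1$, one reads off $Y_{lm}(n)=\sqrt{(2l+1)/4\pi}\,\delta_{m}^{0}$. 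Consequently only the $m=0$ coefficients survive, and (\ref{specrap}) reduces to
\[
T(n)=\sum_{l\geq 0}\sqrt{\frac{2l+1}{4\pi}}\,a_{l0}.
\]

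Next I would raise this to the $p$-th power and take expectations, interchanging the ($p$-fold) summation with $E[\cdot]$, to obtain
\[
ET(n)^{p}=\sum_{l_{1},\dots,l_{p}}\sqrt{\frac{(2l_{1}+1)\cdots(2l_{p}+1)}{(4\pi)^{p}}}\,E\left[a_{l_{1}0}\cdots a_{l_{p}0}\right].
\]
The final step is to substitute the expression for the spectral moment $E[a_{l_{1}0}\cdots a_{l_{p}0}]$ furnished by Theorem \ref{taqqu}, specialized to $m_{1}=\cdots=m_{p}=0$ (so that the prefactor $(-1)^{m_{p}}$ equals $1$): this rewrites the moment as the convolution of Clebsch--Gordan coefficients $C_{l_{1}0\cdots l_{p-1}0}^{\lambda_{1}\cdots\lambda_{p-3}l_{p}0}$ weighted by the reduced polyspectrum $P_{l_{1}\dots l_{p}}(\lambda_{1},\dots,\lambda_{p-3})$. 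Plugging this in, and using $ET(x)^{p}=ET(n)^{p}$ for every $x$, yields exactly the claimed identity.

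The main obstacle is the rigorous justification of the two limiting operations hidden in the formal computation, namely the pointwise validity of (\ref{specrap}) at the fixed point $n$ and the interchange of the infinite $p$-fold sum with the expectation. The first is delicate because (\ref{specrap}) holds \emph{a priori} only in the $L^{2}(\Omega\times S^{2})$ sense, so pointwise evaluation is not automatic; this is precisely where one invokes the a.s. continuity of the trajectories, under which the representation holds pointwise (alternatively, one may replace the point value by its spherical average, exploiting isotropy). The second requires controlling $\sum_{l_{1},\dots,l_{p}}\prod_{i}\sqrt{(2l_{i}+1)/4\pi}\,\bigl|E[a_{l_{1}0}\cdots a_{l_{p}0}]\bigr|$ and passing to the limit by dominated convergence; the bookkeeping here rests on the finite-$p$-th-moment hypothesis, on spectral estimates of the type derived in the proof of Proposition \ref{propa}, and on the addition theorem $\sum_{m}|Y_{lm}(x)|^{2}=(2l+1)/4\pi$, which is what ultimately tames the growth of the individual factors. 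Everything else is a routine substitution once these analytic points are secured.
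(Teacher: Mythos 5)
Your proposal is correct and follows essentially the same route as the paper's own proof: reduce to the North Pole by isotropy, use $Y_{lm}(\text{North Pole})=\sqrt{(2l+1)/4\pi}\,\delta_{m}^{0}$ so that $T$ collapses to $\sum_{l}a_{l0}\sqrt{(2l+1)/4\pi}$, then expand the $p$-th power and substitute Theorem \ref{taqqu} at $m_{1}=\cdots=m_{p}=0$. The only difference is that you spell out the analytic justifications (pointwise validity of (\ref{specrap}) and the sum--expectation interchange) which the paper treats as immediate, calling the pointwise reduction a \emph{trivial fact}.
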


\begin{proof}
We use the trivial fact that
\begin{equation*}
T(x)\overset{d}{=}T(0)=\sum_{l}a_{l0}Y_{l0}(0)=\sum_{l}a_{l0}\sqrt{\frac{2l+1%
}{4\pi }}\text{,}
\end{equation*}%
where $0$ is the North Pole and we used the fact that, for $m\neq 0$, $%
Y_{lm}\left( 0\right) =0$ and $Y_{l0}\left( 0\right) =\sqrt{\frac{2l+1}{4\pi
}}$ (see e.g. \cite[Chapter 5]{VMK}). Hence,
\begin{eqnarray*}
ET^{p} &=&\sum_{l_{1}...l_{p}}\sqrt{\frac{(2l_{1}+1)\cdot \cdot \cdot
(2l_{p}+1)}{(4\pi )^{p}}}E\left\{ a_{l_{1}0}...a_{l_{p}0}\right\} \\
&=&\sum_{l_{1}...l_{p}}\sqrt{\frac{(2l_{1}+1)\cdot \cdot \cdot (2l_{p}+1)}{%
(4\pi )^{p}}}\sum_{\lambda _{1}...\lambda _{p-3}}P_{l_{1}...l_{p}}(\lambda
_{1},...,\lambda _{p-3})C_{l_{1}0...l_{p-2}0}^{\lambda _{1}...\lambda
_{p-3}l_{p}0}.
\end{eqnarray*}
\end{proof}

\bigskip

\textbf{Example.} Take $T=H_{q}(T_{G})$, where $H_{q}$ is the $q$th Hermite
polynomial$.$ Then $ET^{p}=c_{pq}\left\{ ET^{2}\right\} ^{qp/2}$, where $%
c_{pq}\in \mathbb{N}$ denotes the number of Gaussian diagrams without flat
edges with $p$ rows and $q$ columns (see \cite{Surg}). Therefore, one has
the identity%
\begin{eqnarray*}
&&\sum_{l_{1}...l_{p}}\sqrt{\frac{(2l_{1}+1)...(2l_{p}+1)}{(4\pi )^{p}}}%
\sum_{\lambda _{1}...\lambda _{p-3}}P_{l_{1}...l_{p}}(\lambda
_{1},...,\lambda _{p-3})C_{l_{1}0...l_{p-2}0}^{\lambda _{1}...\lambda
_{p-3}l_{p}0} \\
&=&c_{pq}\left\{ \sum_{l}\frac{(2l+1)}{4\pi }C_{l}\right\} ^{pq/2}.
\end{eqnarray*}

\subsection{The $\protect\chi _{\protect\nu }^{2}$ polyspectrum}

Previously in (\ref{bard2}), we have implicitly derived the ``$\chi _{1}^{2}$
bispectrum'', that is, the bispectrum associated with a field of the type $%
T=H_{2}\left( T_{G}\right) $, where $T_{G}$ is Gaussian, centered, isotropic
and with unit variance. More precisely, with the notation (\ref{tmoe})--(\ref%
{tmoe3}), one deduces from (\ref{bard2}) that%
\begin{eqnarray}
&&Ea_{l_{1}m_{1}}(2)a_{l_{2}m_{2}}(2)a_{l_{3}m_{3}}(2) \\
&&=\sum_{\substack{ \ell _{1}\ell _{2}\ell _{3} \times \notag  \\
\ell _{4}\ell _{5}\ell _{6}}}\sum_{\mu _{1}...\mu _{6}}C_{\ell
_{1}0\ell _{2}0}^{l_{1}0}C_{\ell _{1}\mu _{1}\ell _{2}\mu
_{2}}^{l_{1}m_{1}}C_{\ell _{3}0\ell _{4}0}^{l_{2}0}C_{\ell _{3}\mu
_{3}\ell _{4}\mu _{4}}^{l_{2}m_{2}}C_{\ell _{5}0\ell
_{6}0}^{l_{3}0}C_{\ell _{5}\mu _{5}\ell _{6}\mu _{6}}^{l_{3}m_{3}}
\times \notag
\\
&&\times \sqrt{\frac{(2\ell _{1}+1)(2\ell _{2}+1)}{(2l_{1}+1)4\pi
}\frac{(2\ell
_{3}+1)(2\ell _{4}+1)}{(2l_{2}+1)4\pi }\frac{(2\ell _{5}+1)(2\ell _{6}+1)}{%
(2l_{3}+1)4\pi }} \times \notag \\&& \times E\left\{ a_{\ell _{1}\mu
_{1}}a_{\ell _{2}\mu _{2}}a_{\ell _{3}\mu _{3}}a_{\ell _{4}\mu
_{4}}a_{\ell _{5}\mu
_{5}}a_{\ell _{6}\mu _{6}}\right\}\notag \\
&& =8(-1)^{l_{3}-m_{3}}\sum_{\ell _{1}\ell _{2}\ell _{3}}C_{\ell
_{1}0\ell _{2}0}^{l_{1}0}C_{\ell _{1}0\ell _{3}0}^{l_{2}0}C_{\ell
_{2}0\ell _{3}0}^{l_{3}0} \frac{(2\ell _{1}+1)(2\ell _{2}+1)(2\ell
_{3}+1)}{\sqrt{(4\pi )^{3}}}\times \notag \\
&& \times
\frac{C_{l_{1}m_{1}l_{2}m_{2}}^{l_{3}-m_{3}}}{\sqrt{2l_{3}+1}}\left\{
\begin{tabular}{lll}
$\ell _{1}$ & $\ell _{2}$ & $\ell _{3}$ \\
$l_{3}$ & $l_{2}$ & $l_{1}$%
\end{tabular}%
\right\} \left\{ C_{\ell _{1}}C_{\ell _{2}}C_{\ell _{3}}\right\} \text{ ,}
\label{earlier}
\end{eqnarray}%
see \cite[p. 260 ; p. 454]{VMK}.\emph{\ }We now wish to extend these results
to polyspectra of order $p=4,5,6$ for random fields of the type $T=$ $%
H_{2}(T_{G})$, where (as above) $T_{G}$ is Gaussian, centered, isotropic and
with unit variance $.$ As anticipated, here we focus on cumulants instead of
moments. We have the following result.

\begin{proposition}
\label{h2c}The cumulant $\chi \left( a_{l_{1}m_{1}}\left( 2\right)
,...,a_{l_{p}m_{p}}\left( 2\right) \right) $ ($p=4,5,6$) associated with the
harmonic coefficients of an isotropic random field of the type $H_{2}\left(
T_{G}\right) $ (where $T_{G}$ is Gaussian and isotropic, with angular power
spectrum $\left\{ C_{l}:l\geq 0\right\} $) given by%
\begin{equation*}
\chi \left( a_{l_{1}m_{1}}\left( 2\right) ,...,a_{l_{p}m_{p}}\left( 2\right)
\right) =(-1)^{l_{p}-m_{p}}\sum_{\lambda _{1}...\lambda
_{p-3}}C_{l_{1}m_{1}...l_{p-1}m_{p-1}}^{\lambda _{1}...\lambda
_{p-3}l_{p}-m_{p}}\times P_{l_{1}...l_{p}}^{C;1}\left( \lambda
_{1},...,\lambda _{p-3}\right) ,
\end{equation*}%
where the reduced cumulant polyspectrum $\left\{ P_{l_{1}...l_{p}}^{C}\left(
\cdot \right) :l_{1},...,l_{p}\geq 0\right\} $ is given by%
\begin{equation*}
P_{l_{1}l_{2}l_{3}l_{4}}^{C;1}(\lambda )=48\sqrt{\frac{\left( 2\lambda
+1\right) }{(4\pi )^{4}(2l_{4}+1)}}\sum_{\ell _{1}...\ell _{4}}C_{\ell
_{1}}...C_{\ell _{4}}C_{\ell _{1}0\ell _{2}0}^{l_{1}0}C_{\ell _{2}0\ell
_{3}0}^{l_{3}0}C_{\ell _{3}0\ell _{4}0}^{l_{4}0}C_{\ell _{4}0\ell
_{1}0}^{l_{2}0}
\end{equation*}%
\begin{equation*}
\times (2\ell _{1}+1)...(2\ell _{4}+1)(-1)^{l_{1}+l_{2}+\ell _{2}+\ell
_{4}}\left\{
\begin{tabular}{lll}
$l_{1}$ & $l_{2}$ & $\lambda $ \\
$\ell _{4}$ & $\ell _{2}$ & $\ell _{1}$%
\end{tabular}%
\right\} \left\{
\begin{tabular}{lll}
$\lambda $ & $l_{3}$ & $l_{4}$ \\
$\ell _{3}$ & $\ell _{4}$ & $\ell _{2}$%
\end{tabular}%
\right\} \text{ for }p=4\text{ ,}
\end{equation*}%
\begin{equation*}
P_{l_{1}...l_{5}}^{C;1}(\lambda _{1},\lambda _{2})=384\sqrt{\frac{\left(
2\lambda _{1}+1\right) \left( 2\lambda _{2}+1\right) }{(4\pi )^{5}(2l_{5}+1)}%
}\sum_{\ell _{1}...\ell _{5}}C_{\ell _{1}}...C_{\ell _{5}}C_{\ell _{1}0\ell
_{2}0}^{l_{1}0}C_{\ell _{2}0\ell _{3}0}^{l_{2}0}C_{\ell _{3}0\ell
_{4}0}^{l_{4}0}C_{\ell _{4}0\ell _{5}0}^{l_{5}0}C_{\ell _{5}0\ell
_{1}0}^{l_{3}0}\times
\end{equation*}%
\begin{equation*}
\times (2\ell _{1}+1)...(2\ell _{5}+1)(-1)^{\ell _{1}+\ell
_{5}+l_{3}}\left\{
\begin{tabular}{lll}
$l_{1}$ & $l_{2}$ & $\lambda _{1}$ \\
$\ell _{3}$ & $\ell _{1}$ & $\ell _{2}$%
\end{tabular}%
\right\} \left\{
\begin{tabular}{lll}
$\lambda _{1}$ & $l_{3}$ & $\lambda _{2}$ \\
$\ell _{5}$ & $\ell _{3}$ & $\ell _{1}$%
\end{tabular}%
\right\} \left\{
\begin{tabular}{lll}
$\lambda _{2}$ & $l_{4}$ & $l_{5}$ \\
$\ell _{4}$ & $\ell _{5}$ & $\ell _{3}$%
\end{tabular}%
\right\} \text{ ,for }p=5\text{ ,}
\end{equation*}%
and%
\begin{equation*}
P_{l_{1}...l_{6}}^{C;1}(\lambda _{1},\lambda _{2},\lambda _{3})=3840\sqrt{%
\frac{\left( 2\lambda _{1}+1\right) \left( 2\lambda _{2}+1\right) \left(
2\lambda _{3}+1\right) }{(4\pi )^{6}(2l_{5}+1)}}\sum_{\ell _{1}...\ell
_{5}}C_{\ell _{1}}...C_{\ell _{6}}C_{\ell _{1}0\ell _{2}0}^{l_{1}0}C_{\ell
_{2}0\ell _{3}0}^{l_{2}0}C_{\ell _{3}0\ell _{4}0}^{l_{3}0}C_{\ell _{4}0\ell
_{5}0}^{l_{5}0}C_{\ell _{5}0\ell _{6}0}^{l_{6}0}C_{\ell _{6}0\ell
_{1}0}^{l_{4}0}\times
\end{equation*}%
\begin{equation*}
\times (2\ell _{1}+1)...(2\ell _{6}+1)(-1)^{\lambda _{1}+\ell _{3}+\ell
_{6}+l_{4}}\left\{
\begin{tabular}{lll}
$l_{1}$ & $l_{2}$ & $\lambda _{1}$ \\
$\ell _{3}$ & $\ell _{1}$ & $\ell _{2}$%
\end{tabular}%
\right\} \left\{
\begin{tabular}{lll}
$\lambda _{1}$ & $l_{5}$ & $\lambda _{2}$ \\
$\ell _{5}$ & $\ell _{3}$ & $\ell _{1}$%
\end{tabular}%
\right\} \left\{
\begin{tabular}{lll}
$\lambda _{2}$ & $l_{3}$ & $l_{4}$ \\
$\ell _{4}$ & $\ell _{5}$ & $\ell _{3}$%
\end{tabular}%
\right\} \text{ ,for }p=6\text{ .}
\end{equation*}
\end{proposition}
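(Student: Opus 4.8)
The plan is to reduce the joint cumulant to a sum over connected Gaussian diagrams, to collapse these diagrams to a single loop by the diagram formula for second-order Wick polynomials, and finally to carry out the sums over magnetic quantum numbers by repeated use of the recoupling identities linking products of Clebsch-Gordan coefficients to Wigner $6j$ symbols; this extends to $p=4,5,6$ the computation already performed for $p=3$ in (\ref{earlier}).

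First I would record that, since $ET_G^2=1$ and $H_2(x)=x^2-1$, for every $l_j\geq 1$ the coefficient $a_{l_jm_j}(2)=\int_{S^2}H_2(T_G)\overline{Y_{l_jm_j}}\,dx$ lies in the second Wiener chaos of the Gaussian family $\{a_{\ell m}\}$ and admits the bilinear expansion already derived in the Bardeen case,
\[
a_{l_jm_j}(2)=\sum_{\ell\,\ell'}\sum_{\mu\,\mu'}a_{\ell\mu}a_{\ell'\mu'}\sqrt{\tfrac{(2\ell+1)(2\ell'+1)}{(2l_j+1)4\pi}}\,C_{\ell 0\ell'0}^{l_j0}C_{\ell\mu\ell'\mu'}^{l_jm_j}.
\]
Consequently $\mathrm{Cum}\{a_{l_1m_1}(2),\dots,a_{l_pm_p}(2)\}$ is the joint cumulant of $p$ quadratic forms in the centred, jointly Gaussian coefficients $\{a_{\ell m}\}$, whose law is fixed by $Ea_{\ell\mu}\overline{a_{\ell'\mu'}}=\delta_\ell^{\ell'}\delta_\mu^{\mu'}C_\ell$ together with the reality relation $a_{\ell,-\mu}=(-1)^\mu\overline{a_{\ell\mu}}$.

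Next I would apply the diagram formula for Hermite (Wick) polynomials (see \cite{Surg}): the joint cumulant equals the sum, over connected diagrams with no flat edges, of the products of the edge covariances. Since every vertex carries exactly two legs, a connected diagram on $p$ vertices is necessarily a single Hamiltonian loop, and a direct count gives $(p-1)!\,2^{p-1}$ such diagrams, i.e. $48,384,3840$ for $p=4,5,6$. Because the frequencies $\ell_1,\dots,\ell_p$ are dummy summation indices and the Clebsch-Gordan coefficients are symmetric under exchange of the two legs (the sign $(-1)^{\ell+\ell'-l_j}$ equalling $+1$, since $C_{\ell0\ell'0}^{l_j0}$ forces $\ell+\ell'+l_j$ to be even), all loop diagrams contribute equally after relabelling, so that the cumulant collapses to $(p-1)!\,2^{p-1}$ times one representative loop. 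Each edge then contributes $Ea_{\ell\mu}a_{\ell'\mu'}=(-1)^\mu\delta_\ell^{\ell'}\delta_\mu^{-\mu'}C_\ell$, identifying the two frequencies of the edge and leaving a single sum over the loop frequencies $\ell_1,\dots,\ell_p$, weighted by the dimension factors $(2\ell_i+1)$, by the longitudinal coefficients coupling consecutive loop frequencies to the external multipoles $l_1,\dots,l_p$, and by a residual product of Clebsch-Gordan coefficients in the magnetic numbers $\mu_i$.

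The remaining and most delicate step would be to sum this residual product over $\mu_1,\dots,\mu_p$. Converting each coefficient to a Wigner $3j$ symbol via (\ref{clewig1})--(\ref{clewig2}) turns the loop sum into a contraction of $3j$ symbols along a closed network, which the standard recoupling identities of \cite[Chapter 9]{VMK} collapse into a product of Wigner $6j$ symbols while introducing the coupling variables $\lambda_1,\dots,\lambda_{p-3}$ and the normalisations $\sqrt{2\lambda_i+1}$. Matching the result against the canonical coupling form (\ref{Spectre11/2}) of Theorem \ref{taqqu} then identifies $P^{C;1}_{l_1\dots l_p}$, the three cases $p=4,5,6$ differing only in the number of $6j$ factors (one, two and three respectively). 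I expect the main obstacle to be precisely this bookkeeping: tracking exactly the phases $(-1)^{\cdots}$ produced by the reality relation, by the $3j$--Clebsch-Gordan conversion and by the orientation of the loop, and aligning the ordering conventions of the $6j$ symbols term by term with those of \cite{VMK}.
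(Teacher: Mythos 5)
Your proposal follows the paper's own proof essentially step for step: the bilinear expansion of $a_{l_jm_j}(2)$ in the Gaussian coefficients, the reduction of the joint cumulant to connected non-flat Gaussian diagrams via the diagram formula of \cite{Surg}, the identification of these diagrams as single loops with the count $2^{p-1}(p-1)!$, and the final reduction of the residual Clebsch--Gordan contraction to Wigner $6j$ symbols through the recoupling machinery of \cite{VMK} (the paper invokes the prepackaged hexagon formula 12.1.6.30 of \cite[p. 461]{VMK} where you propose to redo the recoupling by hand, but this is the same mathematics).

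There is, however, one concrete gap, and it sits exactly at the step that both you and the paper treat most casually. Your claim that all $2^{p-1}(p-1)!$ loop diagrams ``contribute equally after relabelling'' is justified only by (i) relabelling of the dummy frequencies $\ell_i$ and (ii) the leg-exchange symmetry coming from $(-1)^{\ell+\ell'-l_j}=1$. These two operations identify only the $2^{p}$ diagrams realizing a \emph{fixed} cyclic ordering of the external multipoles around the loop; they never map one cyclic ordering to another. For $p\geq 4$ there are $(p-1)!/2$ inequivalent cyclic orderings ($3$, $12$, $60$ for $p=4,5,6$), and their contributions are genuinely different functions of the external indices: for $p=4$, taking $l_{1}=l_{2}=0$ and $l_{3}=l_{4}=L$, the ring visiting the multipoles in the order $(1,2,3,4)$ produces a sum weighted by $C_{\ell}^{3}C_{\ell'}$, whereas the ring $(1,3,2,4)$ produces one weighted by $C_{\ell}^{2}C_{\ell'}^{2}$, and these differ for a generic angular power spectrum. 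Consequently the cumulant equals $2^{p}$ times the \emph{sum} over the $(p-1)!/2$ ring orderings, not $2^{p-1}(p-1)!$ times a single representative (note also that a single-ring expression is not symmetric under permutations of the $(l_j,m_j)$, while a joint cumulant must be); only for $p=3$, where the cyclic ordering is unique, is the collapse valid as stated, which is why the bispectrum computation (\ref{earlier}) is safe. To close the gap you would have to either prove that the inequivalent orderings yield the same reduced polyspectrum (the example above shows they do not in general) or carry each ordering through the $6j$ reduction separately and sum the results. The paper's own appeal to ``standard combinatorial arguments and a convenient relabelling of the indexes'' conceals the same difficulty, so on this point your proposal is a faithful reconstruction of the published argument, gap included.
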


\begin{proof}
The result can be proved by means of the standard graphical techniques for
convolutions of Clebsch-Gordan coefficients, as described in \cite[Chapters
11 and 12]{VMK}. Here, we only provide the complete proof for the case $p=6$%
. Let $\left\{ a_{\ell m}\right\} $ be the random harmonic coefficients
associated with the underlying Gaussian field $T_{G}$. By definition, the
field $H_{2}\left( T_{G}\right) $ admits the expansion%
\begin{equation*}
H_{2}\left( T_{G}\right) =\sum_{l\geq 0}\sum_{m=-l}^{l}a_{lm}\left( 2\right)
Y_{lm},
\end{equation*}%
where
\begin{eqnarray*}
a_{lm}\left( 2\right) &=&\sum_{\ell _{1}m_{1}\ell _{2}m_{2}}a_{\ell
_{1}m_{1}}a_{\ell _{2}m_{2}}\int_{S^{2}}Y_{\ell _{1}m_{1}}\left( x\right)
Y_{\ell _{2}m_{2}}\left( x\right) \overline{Y_{lm}\left( x\right) }dx \\
&=&\sum_{\ell _{1}m_{1}\ell _{2}m_{2}}a_{\ell _{1}m_{1}}a_{\ell
_{2}m_{2}}\left(
\begin{array}{ccc}
\ell _{1} & \ell _{2} & l \\
m_{1} & m_{2} & -m%
\end{array}%
\right) \times \left( -1\right) ^{m}\times \\
&&\text{ \ \ \ \ \ \ \ \ \ \ \ \ \ \ \ \ \ \ \ \ \ \ \ \ }\times \left(
\begin{array}{ccc}
\ell _{1} & \ell _{2} & l \\
0 & 0 & 0%
\end{array}%
\right) \sqrt{\frac{\left( 2\ell _{1}+1\right) \left( 2\ell _{2}+1\right)
\left( 2l+1\right) }{4\pi }} \\
&=&\sum_{\ell _{1}m_{1}\ell _{2}m_{2}}a_{\ell _{1}m_{1}}a_{\ell
_{2}m_{2}}C_{\ell _{1}m_{1}\ell _{2}m_{2}}^{lm}C_{\ell _{1}0\ell _{2}0}^{lm}%
\sqrt{\frac{\left( 2\ell _{1}+1\right) \left( 2\ell _{2}+1\right) }{4\pi
\left( 2l+1\right) }}.
\end{eqnarray*}

By using once again the multilinearity of cumulants, one obtains that%
\begin{eqnarray*}
&&\mathrm{Cum}\left\{ a_{l_{1}m_{1}}\left( 2\right)
,...,a_{l_{6}m_{6}}\left( 2\right) \right\} \\
&=&\sum_{\ell _{11}m_{11}\ell _{12}m_{12}}\cdot \cdot \cdot \sum_{\ell
_{61}m_{61}\ell _{61}m_{61}}\mathrm{Cum}\left\{ a_{\ell _{11}m_{11}}a_{\ell
_{12}m_{12}},...,a_{\ell _{61}m_{61}}a_{\ell _{62}m_{62}}\right\} \times \\
&&\times \prod_{j=1}^{6}\left\{ C_{\ell _{j1}m_{j1}\ell
_{j2}m_{j2}}^{l_{j}m_{j}}C_{\ell _{j1}0\ell _{j2}0}^{l_{j}m_{j}}\sqrt{\frac{%
\left( 2\ell _{j1}+1\right) \left( 2\ell _{j2}+1\right) }{4\pi \left(
2l_{j}+1\right) }}\right\} .
\end{eqnarray*}%
For a given $\mathbf{lm}=\left( \ell _{11}m_{11},\ell _{12}m_{12};...;\ell
_{61}m_{61},\ell _{62}m_{62}\right) $, the quantity $\mathrm{Cum}\left\{
a_{\ell _{11}m_{11}}a_{\ell _{12}m_{12}},...,a_{\ell _{61}m_{61}}a_{\ell
_{62}m_{62}}\right\} $ is computed as follows:

\begin{itemize}
\item Build the $6\times 2$ matrix%
\begin{equation*}
\Lambda \left( \mathbf{lm}\right) =\left[
\begin{array}{cc}
\ell _{11}m_{11} & \ell _{12}m_{12} \\
\ell _{21}m_{21} & \ell _{22}m_{22} \\
\ell _{31}m_{31} & \ell _{32}m_{32} \\
\ell _{41}m_{41} & \ell _{42}m_{42} \\
\ell _{51}m_{51} & \ell _{52}m_{52} \\
\ell _{61}m_{61} & \ell _{62}m_{62}%
\end{array}%
\right]
\end{equation*}

\item Define the class $M\left( \Lambda \left( \mathbf{lm}\right) \right) $
of connected, Gaussian non-flat diagrams over $\Lambda $, that is, every $%
\gamma \in M\left( \Lambda \left( \mathbf{lm}\right) \right) $ is a
partition of the entries of $\Lambda \left( \mathbf{lm}\right) $, into pairs
belonging to different rows; moreover, such a partition has to be \textsl{%
connected}, in the sense that $\gamma $ cannot be divided into two separate
diagrams. For instance, an element of $M\left( \Lambda \left( \mathbf{lm}%
\right) \right) $ is
\begin{eqnarray*}
\gamma &=&\{\left\{ \ell _{11}m_{11},\ell _{21}m_{21}\right\} \left\{ \ell
_{22}m_{22},\ell _{32}m_{32}\right\} \left\{ \ell _{31}m_{31},\ell
_{41}m_{41}\right\} \\
&&\text{ \ \ \ \ \ \ \ \ \ \ \ \ \ \ \ \ \ \ \ \ \ \ \ \ \ \ \ \ \ \ }%
\left\{ \ell _{42}m_{42},\ell _{52}m_{52}\right\} \left\{ \ell
_{51}m_{61},\ell _{61}m_{61}\right\} \left\{ \ell _{62}m_{62},\ell
_{12}m_{12}\right\} \}
\end{eqnarray*}

\item For every $\gamma \in M\left( \Lambda \left( \mathbf{lm}\right)
\right) $, write
\begin{equation*}
\delta \left( \gamma \right) =\prod_{\left\{ \ell _{ab}m_{ab},\ell
_{cd}m_{cd}\right\} \in \gamma }\delta _{\ell _{cd}}^{\ell _{ab}}\delta
_{m_{ab}}^{-m_{cd}}\left( -1\right) ^{m_{ab}}C_{\ell _{ab}}
\end{equation*}%
(where $\delta _{a}^{b}$ is the usual Kronecker symbol)

\item Use the standard diagram formula (see again \cite{Surg}), to obtain
that%
\begin{equation*}
\mathrm{Cum}\left\{ a_{\ell _{11}m_{11}}a_{\ell _{12}m_{12}},...,a_{\ell
_{61}m_{61}}a_{\ell _{62}m_{62}}\right\} =\sum_{\gamma \in M\left( \Lambda
\left( \mathbf{lm}\right) \right) }\delta \left( \gamma \right) .
\end{equation*}
\end{itemize}

It follows that
\begin{eqnarray*}
&&\mathrm{Cum}\left\{ a_{l_{1}m_{1}}\left( 2\right)
,...,a_{l_{6}m_{6}}\left( 2\right) \right\}  \\
&=&\sum_{\mathbf{lm}}\sum_{\gamma \in M\left( \Lambda \left( \mathbf{lm}%
\right) \right) }\delta \left( \gamma \right) \prod_{j=1}^{6}\left\{ C_{\ell
_{j1}m_{j1}\ell _{j2}m_{j2}}^{l_{j}m_{j}}C_{\ell _{j1}0\ell
_{j2}0}^{l_{j}m_{j}}\sqrt{\frac{\left( 2\ell _{j1}+1\right) \left( 2\ell
_{j2}+1\right) }{4\pi \left( 2l_{j}+1\right) }}\right\} ,
\end{eqnarray*}%
where the first sum runs over all vectors of the type $\mathbf{lm}=\left(
\ell _{11}m_{11},\ell _{12}m_{12};...;\ell _{61}m_{61},\ell
_{62}m_{62}\right) $. The proof now follows directly from graphical
techniques. In particular, the previous term can be associated with an
hexagon, having in each vertex an outward line corresponding to a ``free''\
(i.e. not summed up) index $l_{i}m_{i}$, $i=1,...,6$. An expression for
convolutions of Clebsch-Gordan coefficients corresponding to such a
configuration can be found in \cite[p. 461]{VMK}, eq. 12.1.6.30. From this,
standard combinatorial arguments and a convenient relabelling of the
indexes, we obtain that
\begin{eqnarray*}
P_{l_{1}...l_{6}}^{C;1}(\lambda _{1},\lambda _{2},\lambda _{3}) &=&3840\sqrt{%
\frac{\left\{ \prod_{j=1}^{3}\left( 2\lambda _{j}+1\right) \right\} }{(4\pi
)^{6}(2l_{p}+1)}}\times (-1)^{\lambda _{1}+\ell _{3}+\ell _{6}+l_{4}} \\
&&\times \sum_{\ell _{1}...\ell _{6}}(2\ell _{1}+1)\cdot \cdot \cdot (2\ell
_{6}+1)C_{\ell _{1}}...C_{\ell _{6}}C_{\ell _{1}0\ell _{2}0}^{l_{1}0}C_{\ell
_{2}0\ell _{3}0}^{l_{2}0}C_{\ell _{3}0\ell _{4}0}^{l_{3}0}C_{\ell _{4}0\ell
_{5}0}^{l_{5}0}C_{\ell _{5}0\ell _{6}0}^{l_{6}0}C_{\ell _{6}0\ell
_{1}0}^{l_{4}0} \\
&&\times \left\{
\begin{tabular}{lll}
$l_{1}$ & $l_{2}$ & $\lambda _{1}$ \\
$\ell _{3}$ & $\ell _{1}$ & $\ell _{2}$%
\end{tabular}%
\right\} \left\{
\begin{tabular}{lll}
$\lambda _{1}$ & $\lambda _{2}$ & $l_{3}$ \\
$\ell _{4}$ & $\ell _{3}$ & $\ell _{1}$%
\end{tabular}%
\right\} \left\{
\begin{tabular}{lll}
$\lambda _{2}$ & $l_{4}$ & $\lambda _{3}$ \\
$\ell _{6}$ & $\ell _{4}$ & $\ell _{1}$%
\end{tabular}%
\right\} \left\{
\begin{tabular}{lll}
$\lambda _{3}$ & $l_{5}$ & $l_{6}$ \\
$\ell _{5}$ & $\ell _{6}$ & $\ell _{4}$%
\end{tabular}%
\right\} .
\end{eqnarray*}%
Note that $3840=2^{p-1}\left( p-1\right) !=2^{5}5!$ is the number of
automorphisms between graphs belonging to $M\left( \Lambda \left( \mathbf{lm}%
\right) \right) $.
\end{proof}

\bigskip

We recall that the Clebsch-Gordan coefficients $\left\{
C_{a0b0}^{c0}\right\} $ are identically zero unless $a+b+c$ is even; it is
hence easy to see that the previous polyspectra are non-zero only if the sum
$\left\{ l_{1}+...+l_{p}\right\} $ is even as well.

From the previous Proposition, we can derive the corresponding expressions
for the cumulant polyspectra for $\chi _{\nu }^{2}$ random field.

\bigskip

\textbf{Definition B. }We say the random field $T_{\chi _{\nu }^{2}}$ has a
chi-square law with $\nu \geq 1$ degrees of freedom if there exist $\nu $
independent and identically distributed Gaussian random fields $T_{i}$ such
that%
\begin{equation*}
T_{\chi _{\nu }^{2}}\overset{law}{=}T_{1}^{2}+...+T_{\nu }^{2}\text{ .}
\end{equation*}

\bigskip

It is trivial to show that $T_{\chi _{\nu }^{2}}$ is mean-square continuous
and isotropic if $T_{i}$ is. We have the following

\begin{proposition}
The cumulant polyspectra of $T_{\chi _{\nu }^{2}}$ (for $p\geq 2$) are given
by%
\begin{equation*}
P_{l_{1}...l_{p}}^{C;\nu }(\lambda _{1},...,\lambda _{p-3})=\nu
P_{l_{1}...l_{p}}^{C;1}(\lambda _{1},...,\lambda _{p-3}).
\end{equation*}
\end{proposition}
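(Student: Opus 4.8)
The plan is to reduce the $\chi^2_\nu$ case to the $\nu=1$ case already settled in Proposition \ref{h2c}, by exploiting the additivity of cumulants over independent summands. First I would write, under the unit-variance normalization of Proposition \ref{h2c}, $T_{\chi^2_\nu}\overset{law}{=}\sum_{i=1}^{\nu}T_i^2=\sum_{i=1}^{\nu}H_2(T_i)+\nu$, where the $T_i$ are i.i.d. Gaussian isotropic fields with common angular power spectrum $\{C_l\}$. Writing $a^{(i)}_{lm}(2)$ for the harmonic coefficients of $H_2(T_i)$, and noting that, by linearity of $a_{lm}=\int_{S^2}T\overline{Y_{lm}}\,dx$ and orthogonality of the spherical harmonics, the additive constant $\nu$ only perturbs the coefficient of index $(0,0)$, I obtain
\[
a_{lm}(T_{\chi^2_\nu})=\sum_{i=1}^{\nu}a^{(i)}_{lm}(2)+c\,\delta_l^0\delta_m^0,
\]
with $c$ a deterministic constant. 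Since joint cumulants of order $p\geq 2$ are invariant under the addition of deterministic constants, this shift is irrelevant to every polyspectrum under consideration.

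Next I would expand the joint cumulant by multilinearity, obtaining
\[
\mathrm{Cum}\{a_{l_1m_1}(T_{\chi^2_\nu}),\dots,a_{l_pm_p}(T_{\chi^2_\nu})\}=\sum_{i_1,\dots,i_p=1}^{\nu}\mathrm{Cum}\{a^{(i_1)}_{l_1m_1}(2),\dots,a^{(i_p)}_{l_pm_p}(2)\}.
\]
The key structural step is then to invoke the standard vanishing property of joint cumulants: since each $a^{(i_j)}_{l_jm_j}(2)$ is a measurable function of $T_{i_j}$ alone and the fields $T_1,\dots,T_\nu$ are independent, any summand whose indices $i_1,\dots,i_p$ are not all equal decomposes into at least two mutually independent blocks and therefore has vanishing cumulant. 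Only the $\nu$ diagonal terms $i_1=\dots=i_p=i$ survive, and, the $T_i$ being identically distributed, each of them equals the $\nu=1$ cumulant. Hence the joint cumulant equals $\nu\,\mathrm{Cum}\{a_{l_1m_1}(2),\dots,a_{l_pm_p}(2)\}$.

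Finally I would match this against the defining expansion (\ref{Spectre11/2}). By Proposition \ref{h2c}, the $\nu=1$ cumulant is
\[
(-1)^{l_p-m_p}\sum_{\lambda_1\dots\lambda_{p-3}}C^{\lambda_1\dots\lambda_{p-3}l_p-m_p}_{l_1m_1\dots l_{p-1}m_{p-1}}\,P^{C;1}_{l_1\dots l_p}(\lambda_1,\dots,\lambda_{p-3}),
\]
so the $\chi^2_\nu$ cumulant admits precisely the same expansion with $P^{C;1}$ replaced by $\nu P^{C;1}$. Because the columns of the generalized Clebsch--Gordan matrix $C_{l_1\dots l_p}$ are linearly independent (as used at the end of the proof of Theorem \ref{taqqu}), the reduced cumulant polyspectrum is uniquely determined by the cumulant array, and I may read off $P^{C;\nu}_{l_1\dots l_p}(\cdot)=\nu\,P^{C;1}_{l_1\dots l_p}(\cdot)$. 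The only point demanding genuine care is the cumulant-vanishing argument across independent blocks; everything else is bookkeeping. I note finally that the scaling relation holds for every $p\geq 2$, even though Proposition \ref{h2c} records the explicit form of $P^{C;1}$ only for $p=4,5,6$.
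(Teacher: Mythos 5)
Your proof is correct and follows essentially the same route as the paper's: reduce to the $\nu=1$ case of Proposition \ref{h2c} by writing $T_{\chi^2_\nu}$ as a sum of independent copies of $H_2(T_i)$ plus a constant, kill the constant by translation-invariance of cumulants of order $p\geq 2$, and use multilinearity together with the vanishing of mixed cumulants across independent fields to get the factor $\nu$. The paper compresses all of this into two sentences, and your additional care (the shift affecting only the $(0,0)$ coefficient, and the identification of the reduced polyspectrum via linear independence of the columns of $C_{l_1\dots l_p}$) is exactly the bookkeeping it leaves implicit.
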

\begin{proof}
Note that the cumulant polyspectra of order $p\geq 2$ of $T_{\chi _{\nu
}^{2}}$ coincide with those of the centered field $T_{\chi _{\nu
}^{2}}-ET_{\chi _{\nu }^{2}}$ (due to the translation-invariance properties
of cumulants). Then, the proof is an immediate consequence of Proposition %
\ref{h2c} and the of the standard multinearity properties of cumulants.
\end{proof}

\section{Further Issues and Applications \label{further}}

The purpose of this final Section is to introduce what we view as promising
directions for further research, where the ideas of this paper may perhaps
yield further insights. We shall delay to future work a more thorough
investigation of the issues which are left open below.

\subsection{Representations of the Symmetric Group\label{SS : reprSym}}

As a further link between representation theory and higher order angular
power spectra, we mention the following. It is to be stressed that the
decomposition of $\Delta _{l_{1}...l_{n}}$ that we achieved in the previous
Proposition \ref{propb} is by no means unique. In particular, what we did
was to choose a particular sequence of \ \textquotedblleft
couplings\textquotedblright , i.e. we partitioned tensor products of the
Wigner's matrices $D^{l}$ in a specific order before decomposing them into
direct sums. Alternative partitions yield different eigenvectors and
therefore, different expressions for the polyspectra/joint moments .
Alternatively, we could maintain the same coupling scheme (for instance,
\textquotedblleft start always from the first pair on the
left\textquotedblright , as we did earlier) but acting on $(l_{1},...,l_{n})$
by the symmetric group $S_{n}.$ However, not all coupling schemes can be
achieved by simply permuting the elements of $(l_{1},l_{2},...,l_{n}).$ This
is the well-known \emph{problem of parentheses }in Mathematical Physics (see
for instance \cite{BieLou}).

We suggest here that one can establish a link between alternate expressions
for the angular polyspectra and representations of the symmetric group. More
precisely the alternate expressions that we find for the polyspectra $%
P_{l_{1}....l_{n}}(\lambda _{1},...,\lambda _{n-3})$ of a strongly isotropic
field (with $n$-moments) must be such that, for every permutation $\pi \in
\mathfrak{S}_{n}$,%
\begin{eqnarray*}
&&\sum_{\lambda _{1}}...\sum_{\lambda
_{n-3}}C_{l_{1}m_{1}....l_{n-1}m_{n-1}}^{\lambda _{1}...\lambda
_{n-3};l_{n}-m_{n}}P_{l_{1}....l_{n}}(\lambda _{1},...,\lambda _{n-3}) \\
&=&\sum_{\lambda _{1}^{\prime }}...\sum_{\lambda _{n-3}^{\prime }}C_{\pi
(l_{1})m_{1}....\pi (l_{n-1})m_{n-1}}^{\lambda _{1}^{\prime }...\lambda
_{n-3}^{\prime };l_{n}-m_{n}}P_{\pi (l_{1})....\pi (l_{n})}(\lambda
_{1}^{\prime },...,\lambda _{n-3}^{\prime })\text{ .}
\end{eqnarray*}%
Now let us multiply both sides by $C_{l_{1}m_{1}....l_{n-1}m_{n-1}}^{\lambda
_{1}^{\prime \prime }...\lambda _{n-3}^{\prime \prime };l_{n}m_{n}^{\prime
}} $, where $(\lambda _{1}^{\prime \prime },...,\lambda _{n-3}^{\prime
\prime }) $ is fixed, and sum over $(m_{1},...m_{n}).$ In view of the
unitary properties of Clebsch-Gordan coefficients we obtain for the
left-hand side%
\begin{eqnarray}
&&\sum_{m_{1}...m_{n}}C_{l_{1}m_{1}....l_{n-1}m_{n-1}}^{\lambda _{1}^{\prime
\prime }...\lambda _{n-3}^{\prime \prime };l_{n}-m_{n}}\left\{ \sum_{\lambda
_{1}}...\sum_{\lambda _{n-3}}C_{l_{1}m_{1}....l_{n-1}m_{n-1}}^{\lambda
_{1}...\lambda _{n-3};l_{n}-m_{n}}P_{l_{1}....l_{n}}(\lambda
_{1},...,\lambda _{n-3})\right\}  \notag \\
&=&\sum_{\lambda _{1}}...\sum_{\lambda _{n-3}}\left\{
\sum_{m_{1}...m_{n}}C_{l_{1}m_{1}....l_{n-1}m_{n-1}}^{\lambda _{1}^{\prime
\prime }...\lambda _{n-3}^{\prime \prime
};l_{n}-m_{n}}C_{l_{1}m_{1}....l_{n-1}m_{n-1}}^{\lambda _{1}...\lambda
_{n-3};l_{n}-m_{n}}P_{l_{1}....l_{n}}(\lambda _{1},...,\lambda
_{n-3})\right\}  \notag \\
&=&\sum_{\lambda _{1}}...\sum_{\lambda _{n-3}}\left\{ \delta _{\lambda
_{1}}^{\lambda _{1}^{\prime \prime }}...\delta _{\lambda _{n-3}}^{\lambda
_{n-3}^{\prime \prime }}P_{l_{1}....l_{n}}(\lambda _{1},...,\lambda
_{n-3})\right\} =P_{l_{1}....l_{n}}(\lambda _{1}^{\prime \prime
},...,\lambda _{n-3}^{\prime \prime })\text{;}  \label{franco pippo}
\end{eqnarray}%
on the right-hand side we get%
\begin{eqnarray}
&&\sum_{m_{1}...m_{n}}C_{l_{1}m_{1}....l_{n-1}m_{n-1}}^{\lambda _{1}^{\prime
\prime }...\lambda _{n-3}^{\prime \prime };l_{n}m_{n}}\left\{ \sum_{\lambda
_{1}^{\prime }}...\sum_{\lambda _{n-3}^{\prime }}C_{\pi (l_{1})m_{1}....\pi
(l_{n-1})m_{n-1}}^{\lambda _{1}^{\prime }...\lambda _{n-3}^{\prime
};l_{n}m_{n}}P_{\pi (l_{1})....\pi (l_{n})}(\lambda _{1}^{\prime
},...,\lambda _{n-3}^{\prime })\right\}  \notag \\
&=&\!\!\!\sum_{\lambda _{1}^{\prime }}...\sum_{\lambda
_{n-3}^{\prime }}\sum_{m_{1}...m_{n}}C_{\pi \left( l_{1}\right)
m_{1}....\pi \left( l_{n-1}\right) m_{n-1}}^{\lambda _{1}^{\prime
\prime }...\lambda _{n-3}^{\prime \prime };l_{n}m_{n}}C_{\pi
(l_{1})m_{1}....\pi (l_{n-1})m_{n-1}}^{\lambda _{1}^{\prime
}...\lambda _{n-3}^{\prime };l_{n}m_{n}}P_{\pi (l_{1})....\pi
(l_{n})}(\lambda _{1}^{\prime },...,\lambda _{n-3}^{\prime
})\text{.}  \label{pippo franco}
\end{eqnarray}%
Similarly as in the previous section, the sum of products of Clebsch-Gordan
coefficients on the right hand side can be expressed in terms of higher
order Wigner's coefficients. Since this section is just informal, for
brevity's sake we do not give explicit expressions (see e.g. \cite[ Chapter
10]{VMK}). The two expressions (\ref{franco pippo}) and (\ref{pippo franco})
imply that, for every fixed $\left( l_{1},...,l_{n}\right) $ and every
permutation $\pi $, there exists a square matrix $A\left( \left(
l_{1},...,l_{n}\right) ;\pi \right) $ such that
\begin{equation*}
P_{l_{1}....l_{n}}=A\left\{ \left( l_{1},...,l_{n}\right) ;\pi \right\}
P_{\pi (l_{1})....\pi (l_{n})}\text{,}
\end{equation*}%
where $P_{l_{1}...l_{n}}$ is the vector with entries $P_{l_{1}...l_{n}}%
\left( \lambda _{1},...,\lambda _{n}\right) $. We conjecture that in this
way one can build a representation of the symmetric group $\mathfrak{S}_{n}$
on the vector space generated by admissible polyspectra $P_{l_{1}....l_{n}}.$
If this is indeed the case, some important questions are left open: for
instance, whether or not the representation is \emph{faithful }(see \cite%
{Diaconis}), and whether these ideas can lead to algorithms for the
numerical simulation of representation matrices, along the lines of what we
shall pursue in the next subsection.

\subsection{Random data compression}

In this subsection we shall show how we can exploit the previous results to
develop a probabilistic algorithm to compress information on Clebsch-Gordan
coefficients. Note first that
\begin{equation*}
\#\left\{ C_{l_{1}m_{1}l_{2}m_{2}}^{l_{3}m_{3}}:l_{1},l_{2},l_{3}\leq
L,\left\vert C_{l_{1}m_{1}l_{2}m_{2}}^{l_{3}m_{3}}\right\vert \neq 0\right\}
\approx O(L^{6})\text{ ;}
\end{equation*}%
it is therefore clear how for most applications the storage of
Clebsch-Gordan coefficients for future usage is simply unfeasible, whatever
the supercomputing facilities (for instance, for CMB data analysis, $%
L\approx 3\times 10^{3}$ is currently required, so that the number of
Clebsch-Gordan coefficients to be saved would exceed $10^{20}$). Let us
consider again a chi-square field as defined before, i.e.%
\begin{equation*}
T_{\chi ^{2}}(x)=H_{2}(T_{G}(x))=\sum_{lm}a_{lm}(2)Y_{lm}(x)\text{ ;}
\end{equation*}%
we have proved earlier in (\ref{earlier}) that%
\begin{equation*}
Ea_{l_{1}m_{1}}(2)a_{l_{2}m_{2}}(2)a_{l_{3}m_{3}}(2)=(-1)^{m_{3}}C_{l_{1}m_{1}l_{2}m_{2}}^{l_{3}m_{3}}h_{l_{1}l_{2}l_{3}}
\end{equation*}%
where
\begin{equation*}
h_{l_{1}l_{2}l_{3}}:=8\sum_{\ell _{1}\ell _{2}\ell _{3}}C_{\ell _{1}0\ell
_{2}0}^{l_{1}0}C_{\ell _{1}0\ell _{3}0}^{l_{2}0}C_{\ell _{2}0\ell
_{3}0}^{l_{3}0}\frac{(2\ell _{1}+1)(2\ell _{2}+1)(2\ell _{3}+1)}{\sqrt{(4\pi
)^{3}}}\frac{1}{\sqrt{2l_{3}+1}}\left\{
\begin{tabular}{lll}
$\ell _{1}$ & $\ell _{2}$ & $\ell _{3}$ \\
$l_{1}$ & $l_{2}$ & $l_{3}$%
\end{tabular}%
\right\} \left\{ C_{\ell _{1}}C_{\ell _{2}}C_{\ell _{3}}\right\} \text{ ,}
\end{equation*}%
which can be calculated analytically and stored, with storage dimension%
\begin{equation*}
\#\left\{ h_{l_{1}l_{2}l_{3}}:l_{1},l_{2},l_{3}\leq L,\left\vert
C_{l_{1}0l_{2}0}^{l_{3}0}\right\vert \neq 0\right\} \approx O(L^{3})\text{ .}
\end{equation*}%
Let us assume we simulate $B$ times $T_{\chi ^{2}}(x),$ which is trivially
done by simply squaring a Gaussian field: the latter is obtained by sampling
independent complex Gaussian variables with variance $C_{l}.$ We store the
triangular arrays $\left\{ a_{lm}^{i}\right\} _{l=1,...,L;m=-l,...,l},$ $%
i=1,...,B$; here the dimension is of order $B\times L^{2}.$ We can then
recover any value $C_{l_{1}m_{1}l_{2}m_{2}}^{l_{3}m_{3}}$ by means of the
Monte Carlo estimate%
\begin{equation*}
\widehat{C}_{l_{1}m_{1}l_{2}m_{2}}^{l_{3}m_{3}}=h_{l_{1}l_{2}l_{3}}^{-1}%
\sum_{i=1}^{B}\frac{%
a_{l_{1}m_{1}}^{(i)}a_{l_{2}m_{2}}^{(i)}a_{l_{3}m_{3}}^{(i)}}{B}\text{ ,}
\end{equation*}%
which requires $B$ steps and $B\times L^{2}+L^{3}$ storage capacity, as
opposed to $L^{6}$ storage capacity by the direct method. We leave for
further research a more thorough investigation on the convergence properties
of this algorithm; we stress, however, that the procedure we advocate is
completely general, i.e. it does not depend on peculiar features of the
group $SO(3)$ we are currently considering. We believe, hence, that similar
ideas can be implemented for the numerical estimation of Clebsch-Gordan
coefficients for other compact groups of interest for theoretical
physicists. We leave this and the previous issues in this Section as topics
for further research.

\bigskip


\begin{thebibliography}{99}
\bibitem{adlertaylor} R.J. Adler and J.E. Taylor (2007)\textbf{\ }\textit{%
Random Fields and Geometry,}\emph{\ }Springer

\bibitem{bump} D.\ Bump (2004). \textit{Lie Groups}. Springer-Verlag. Berlin
Heidelberg New York.

\bibitem{creza} Babich, D., Creminelli, P., Zaldarriaga, M. (2004) The Shape
of non-Gaussianities, \textit{Journal of Cosmology and Astroparticle Physics
8, 009}

\bibitem{BaMa} P. Baldi and D. Marinucci (2007). Some characterizations of
the spherical harmonics coefficients for isotropic random fields. \textit{%
Statistics and Probability Letters }\textbf{77}(5), 490-496.

\bibitem{BaMaVa} P. Baldi, D. Marinucci and V.S. Varadarajan (2007) On the
characterization of isotropic random fields on homogeneous spaces of compact
groups. \textit{Electronic Communications in Probability, }\textbf{12}%
\textit{,}\emph{\ }\textit{291-302}

\bibitem{Bart} N. Bartolo, E. Komatsu, S. Matarrese and A. Riotto (2004).
Non-Gaussianity from Inflation: Theory and Observations. \textit{Physical
Reports, }\textbf{402}, 103-266

\bibitem{BieLou} L.C. Biedenharn and J.D. Louck (1981) \textit{The
Racah-Wigner algebra in quantum theory, }Encyclopaedia of Mathematics and
its Applications, Vol. 10, Addison-Wesley

\bibitem{Cabella} P. Cabella, F.K. Hansen, M. Liguori, D. Marinucci, S.
Matarrese, L. Moscardini and N. Vittorio (2006). The integrated bispectrum
as a test of cosmic microwave background non-Gaussianity: detection power
and limits on $f_{NL}$ with WMAP data. \textit{Monthly Notices of the Royal
Astronomical Society}, \textbf{369}, 819-824

\bibitem{chung} M.K. Chung, K.M. Dalton, A.C.Evans, R.J. Davidson (2007)
Tensor-based cortical surface morphometry via weighted spherical harmonics
representation, \textit{IEEE Trans. Med. Imaging}

\bibitem{Diaconis} P. Diaconis (1988) \textit{Group Representations in
Probability and Statistics. }IMS Lecture Notes -- Monograph Series \textbf{11%
}. Hayward, California

\bibitem{DMWZZ} P. Diaconis, E. Mayer-Wolf, O. Zeitouni and M. Zerner
(2004). The Poisson-Dirichlet law is the unique invariant distribution for
uniform split-merge transformations. \textit{Ann. Probab.} \textbf{32}(1B),
915-938

\bibitem{dodelson} S. Dodelson (2003) \textit{Modern Cosmology,}\emph{\ }%
Academic Press

\bibitem{Fulman} J.\ Fulman (2008). Convergence rates of random walk on
irreducible representations of finite groups. \textit{Journal of Theoretical
Probability} \textbf{21}, 193-211.

\bibitem{GKR} Y. Guivarc'h, M. Keane and B. Roynette (1977) \textit{Marches
Al\'{e}atoires sul les Groupes de Lie}, Lecture Notes in
Mathematics,Vol.624, Springer-Verlag.

\bibitem{Hu} W. Hu (2001). The Angular trispectrum of the CMB, \textit{%
Physical Review D}, \textbf{64}, id. 083005

\bibitem{Istas} J. Istas (2007). Quadratic Variations of Spherical
Fractional Brownian Motions. \textit{Stoch.Proc. Appl}., \textbf{117},
476--486

\bibitem{ks} E. Komatsu and D.N. Spergel (2001). Acoustic Signatures in the
Primary Microwave Background Bispectrum. \textit{Physycal Review D}\emph{\ }%
\textbf{63}, id.\emph{\ }063002

\bibitem{Libo} R.L. Liboff (1999). \textit{Introductory Quantum Mechanics},
Addison-Wesley

\bibitem{Maldac} J. Maldacena (2003). Non-Gaussian features of primordial
fluctuations in single field inflationary models \textit{Journal of High
Energy Physics}, \textbf{5}, 0-13

\bibitem{m2006} D. Marinucci (2006) High-resolution asymptotics for the
angular bispectrum of spherical random fields. \textit{The Annals of
Statistics,}\emph{\ }\textbf{34}, 1-41

\bibitem{MarPTRF} D.\ Marinucci (2007). A Central Limit Theorem and Higher
Order Results for the Angular Bispectrum. \textit{Probability Theory and
Related Fields} \textbf{141}, 389-409.

\bibitem{MaPeSphere} D. Marinucci and G. Peccati (2008). Group
representations and high-resolution central limit theorems for subordinated
spherical random fields. Preprint.

\bibitem{MaPe} D. Marinucci and G. Peccati (2008). High-frequency
asymptotics for subordinated stationary fields on an Abelian compact group.
\textit{Stochastic Processes and their Applications}, \textbf{118}, 585-613

\bibitem{PePy} G. Peccati and J.-R. Pycke (2005). Decompositions of
stochastic processes based on irreducible group representations. Preprint.

\bibitem{Pyc} J.-R. Pycke (2007). A decomposition for invariant tests of
uniformity on the sphere. To appear in: \textit{Proceedings of the American
Mathematical Society.}

\bibitem{Raimond} O. Raimond (1999). Flots browniens isotropes sur la sph%
\`{e}re. \textit{Annales de l'institut Henri Poincar\'{e} (B) Probabilit\'{e}%
s et Statistiques}, \textbf{35}(3), 313-354

\bibitem{simons} F.J. Simons, F.A. Dahlen and M.A. Wieczorek (2006)
Spatiospectral concentration on a sphere. \textit{SIAM Rev. 48,, no. 3,
504--536}

\bibitem{StaB} R.P.\ Stanley (1997). \textit{Enumerative combinatorics, I}.
Cambridge University Press.

\bibitem{Sternberg} S. Sternberg (1999) \textit{Group Theory and Physics,}%
\emph{\ }Cambridge University Press

\bibitem{Surg} D. Surgailis (2003). CLTs for polynomials of linear
sequences: Diagram formula with illustrations. In: \textit{Theory and
Applications of Long Range Dependence}, 111-128. Birkh\"{a}user

\bibitem{VMK} D.A. Varshalovich, A.N. Moskalev and V.K. Khersonskii (1988).
\textit{Quantum Theory of Angular Momentum},\emph{\ }World Scientific Press.

\bibitem{VilKly} N.Ja. Vilenkin and A.U. Klimyk (1991) \textit{%
Representation of Lie Groups and Special Functions}, Kluwer, Dordrecht

\bibitem{wieczorek} M.A. Wieczorek and F.J. Simons (2007) Minimum-variance
multitaper spectral estimation on the sphere. \textit{J. Fourier Anal. Appl.
13 , 6, 665--692}

\bibitem{yadav} Yadav, A.P.S. and Wandelt, B.D. (2007)\textbf{\ }Detection
of Primordial non-Gaussianity $(f_{NL})$ in the WMAP 3-Year Data at above
99.5\% Confidence, arxiv: 0712.1148

\bibitem{wandelt} Yadav, A. P. S., Komatsu, E., Wandelt, B. D., Liguori, M.,
Hansen, F. K., Matarrese, S. (2007) Fast Estimator of Primordial
Non-Gaussianity from Temperature and Polarization Anisotropies in the Cosmic
Microwave Background II: Partial Sky Coverage and Inhomogeneous Noise,
preprint arXiv:0711.4933

\bibitem{Yad} M.I. Yadrenko (1983). \textit{Spectral Theory of Random
Fields. }Optimization Software, Inc.. New York.
\end{thebibliography}
\end{document}